\newif\ifpreprint
\title{Data-Driven Performance Guarantees \\for Classical and Learned Optimizers}
\author{Rajiv Sambharya and Bartolomeo Stellato}
\date{%
    Department of Operations Research and Financial Engineering\\
    Princeton University\\[1em]%
    \today
}
\renewcommand\arraystretch{1.2}
\newtheorem{theorem}{Theorem}
\newtheorem{lemma}[theorem]{Lemma}
\newcommand{\acks}[1]{\section*{Acknowledgments}#1}
\newtheorem{definition}{Definition}[section]
\renewcommand\arraystretch{1.2}
\title{Data-Driven Performance Guarantees \\for Classical and Learned Optimizers}
\author{%
 \name Rajiv Sambharya \email sambhar9@seas.upenn.edu\\
 \addr Electrical and Systems Engineering, University of Pennsylvania, Philadelphia, PA, USA\\
 \name Bartolomeo Stellato \email bstellato@princeton.edu\\
 \addr Operations Research and Financial Engineering, Princeton University, Princeton, NJ, USA
}
\newcommand{\bigline}{\;\bigg|\;}
\newcommand*{\startlegend}{-0.2}
\newcommand*{\enlegend}{0.7}
\newcommand{\risk}{r_\mathcal{X}}
\newcommand{\emprisk}{\hat{r}_S}
\newcommand{\exprisk}{R_\mathcal{X}}
\newcommand{\expemprisk}{\hat{R}_S}
\newcommand{\surrogate}{\ell^{\rm logistic}}
\newcommand{\surrogateexpemprisk}{\hat{R}^{\rm logistic}_S}
\DeclareDocumentCommand{\T}{ O{z} O{} }{T\IfValueT{#2}{(#1,\theta_{#2})}\IfNoValueT{#2}{(#1,\theta)}}
\DeclareDocumentCommand{\Tj}{ O{k} O{z} O{} }{T^{#1}\IfValueT{#3}_{\theta_{#3}}{(#2)}}
\DeclareDocumentCommand{\CB}{ O{} }{C_{B_{\IfValueTF{#1}{\theta_{#1}}{\theta}}}}
\DeclareDocumentCommand{\RB}{ O{} }{R_{B_{\IfValueTF{#1}{\theta_{#1}}{\theta}}}}
\newcommand{\eg}{{\it e.g.}}
\newcommand{\ie}{{\it i.e.}}
\newcommand{\ones}{\mathbf 1}
\newcommand{\reals}{{\mbox{\bf R}}}
\newcommand{\symm}{{\mbox{\bf S}}}  
\newcommand{\Tr}{\mathop{\bf tr}}
\newcommand{\diag}{\mathop{\bf diag}}
\newcommand{\dist}{{\bf dist{}}}
\newcommand{\fix}{\mathop{\bf fix{}}}
\newcommand{\dom}{\mathop{\bf dom}} 
\newcommand{\round}{\mathop{\bf round}}
\newcommand{\Sec}{Section}
\newcommand{\Thm}{Theorem}
\newcommand{\Eqn}{Equation}
\newcommand{\Eqnshort}{Eq.}
\newcommand{\reviewChanges}[1]{{#1}}
\newcommand{\ccircle}[3]{%
  \raisebox{0.55\height}{
  \begin{tikzpicture}[baseline=(node.base)]%
    \node[circle, fill={rgb,255:red,#1;green,#2;blue,#3}, yshift=14] (node) at (0,1) {};%
  \end{tikzpicture}%
  }
}
\newcommand{\bigccircle}[3]{%
  \raisebox{0.25\height}{
  \begin{tikzpicture}[baseline=(node.base)]%
    \node[circle, fill={rgb,255:red,#1;green,#2;blue,#3}, yshift=14,  minimum size=0.65cm] (node) at (0,1) {};%
  \end{tikzpicture}%
  }
}
\newcommand{\linestraight}[3]{%
  \raisebox{0.5\height}{
  \begin{tikzpicture}
    \draw[line width=2.5pt, color={rgb,255:red,#1;green,#2;blue,#3}] (0,-.1) -- (0.75,-.1);
  \end{tikzpicture}%
  }
}
\newcommand{\lineblock}[3]{%
  \raisebox{-0.2\height}{
  \begin{tikzpicture}
    \draw[line width=8.0pt, color={rgb,255:red,#1;green,#2;blue,#3}] (0,0) -- (0.75,0);
  \end{tikzpicture}%
  }
}
\newcommand{\linediamondhollow}[3]{%
    \begin{tikzpicture}[baseline={(0,-.1)}]
      \coordinate (A) at (0.15,0);
      \coordinate (B) at (.25,-.1);
      \coordinate (C) at (.35,0);
      \coordinate (D) at (.25,.1);
      \draw[line width=1.0pt, draw={rgb,255:red,#1;green,#2;blue,#3}] (A) -- (B) -- (C) -- (D) -- cycle;
      \draw[line width=1.0pt, dotted, color={rgb,255:red,#1;green,#2;blue,#3}] (\startlegend,0) -- (0.7,0);
    \end{tikzpicture}%
    }
\newcommand{\diagonalcross}[3]{%
  \begin{tikzpicture}[baseline={(0,-.1)}]
    \filldraw[fill={rgb,255:red,#1;green,#2;blue,#3}, draw=none, rotate=45] (-0.1,-0.03) rectangle (0.1,0.03);
    \filldraw[fill={rgb,255:red,#1;green,#2;blue,#3}, draw=none, rotate=-45] (-0.1,-0.03) rectangle (0.1,0.03);
    \draw[line width=0.8pt, color={rgb,255:red,#1;green,#2;blue,#3}] (-.45,0) -- (0.45,0);
  \end{tikzpicture}%
}
\newcommand{\drawstar}[3]{%
  \begin{tikzpicture}[baseline={(0,-.1)}]
    \node[star, star points=5, star point ratio=2.25, fill={rgb,255:red,#1;green,#2;blue,#3}, inner sep=0pt, outer sep=0pt, minimum size=0.3cm, draw=none] at (0,0) {};
    \draw[line width=0.8pt, color={rgb,255:red,#1;green,#2;blue,#3}] (-0.45,0) -- (0.45,0); 
  \end{tikzpicture}%
}
\newcommand{\linecircle}[3]{%
  \begin{tikzpicture}[baseline={(0,-.1)}]
    \draw[draw=none, fill={rgb,255:red,#1;green,#2;blue,#3}](0.25, 0) circle (.08);
    \draw[line width=0.8pt, color={rgb,255:red,#1;green,#2;blue,#3}] (\startlegend,0) -- (\enlegend,0);
  \end{tikzpicture}%
  }
\newcommand{\linecirclehollow}[3]{%
  \begin{tikzpicture}[baseline={(0,-.1)}]
    \draw[line width=0.8pt, draw={rgb,255:red,#1;green,#2;blue,#3}](0.25, 0) circle (.08);
    \draw[line width=0.8pt, dotted, color={rgb,255:red,#1;green,#2;blue,#3}] (\startlegend,0) -- (\enlegend,0);
  \end{tikzpicture}%
  }
  \newcommand{\lineplushollow}[3]{%
  \begin{tikzpicture}[baseline={(0,-.1)}]
    \def\thickness{0.04}
    \def\length{0.11}
    \def\middle{0.25}
    \coordinate (A) at (-\length+\middle, \thickness);
    \coordinate (B) at (-\thickness+\middle, \thickness);
    \coordinate (C) at (-\thickness+\middle, \length);
    \coordinate (D) at (\thickness+\middle, \length);
    \coordinate (E) at (\thickness+\middle, \thickness);
    \coordinate (F) at (\length+\middle, \thickness);
    \coordinate (G) at (\length+\middle, -\thickness);
    \coordinate (H) at (\thickness+\middle, -\thickness);
    \coordinate (I) at (\thickness+\middle, -\length);
    \coordinate (J) at (-\thickness+\middle, -\length);
    \coordinate (K) at (-\thickness+\middle, -\thickness);
    \coordinate (L) at (-\length+\middle, -\thickness);
    
    \draw[line width=0.8pt, color={rgb,255:red,#1;green,#2;blue,#3}] (A) -- (B) -- (C) -- (D) 
    -- (E) -- (F) -- (G) -- (H)-- (I) -- (J) -- (K) -- (L) -- cycle;
    
    
    \draw[line width=0.8pt, dotted, color={rgb,255:red,#1;green,#2;blue,#3}] (\startlegend,0) -- (\enlegend,0);
  \end{tikzpicture}%
}
\newcommand{\linesquare}[3]{%
  \begin{tikzpicture}[baseline={(0,-.1)}]
    \draw[draw=none, fill={rgb,255:red,#1;green,#2;blue,#3}](0.179, -.071) rectangle (.321,.071);
    \draw[line width=0.8pt, color={rgb,255:red,#1;green,#2;blue,#3}] (\startlegend,0) -- (\enlegend,0);
  \end{tikzpicture}%
  }
\newcommand{\linelefttri}[3]{%
  \begin{tikzpicture}[baseline={(0,-.1)}]
    \coordinate (A) at (0.17,0);
    \coordinate (B) at (.3,-.1);
    \coordinate (C) at (.3,.1);
    \draw[draw=none, fill={rgb,255:red,#1;green,#2;blue,#3}] (A) -- (B) -- (C) -- cycle;
    \draw[line width=0.8pt, color={rgb,255:red,#1;green,#2;blue,#3}] (\startlegend,0) -- (\enlegend,0);
  \end{tikzpicture}%
}
\newcommand{\linerighttri}[3]{
  \begin{tikzpicture}[baseline={(0,-.1)}]
    \coordinate (A) at (0.33,0);
    \coordinate (B) at (.2,-.1);
    \coordinate (C) at (.2,.1);
    \draw[draw=none, fill={rgb,255:red,#1;green,#2;blue,#3}] (A) -- (B) -- (C) -- cycle;
    \draw[line width=0.8pt, color={rgb,255:red,#1;green,#2;blue,#3}] (\startlegend,0) -- (\enlegend,0);
  \end{tikzpicture}%
}
\newcommand{\linerighttrihollow}[3]{
  \begin{tikzpicture}[baseline={(0,-.1)}]
    \coordinate (A) at (0.33,0);
    \coordinate (B) at (.2,-.1);
    \coordinate (C) at (.2,.1);
    \draw[line width=0.8pt, draw={rgb,255:red,#1;green,#2;blue,#3}] (A) -- (B) -- (C) -- cycle;
    \draw[line width=0.8pt, dotted, color={rgb,255:red,#1;green,#2;blue,#3}] (\startlegend,0) -- (\enlegend,0);
  \end{tikzpicture}%
}
\newcommand{\linedowntri}[3]{
  \begin{tikzpicture}[baseline={(0,-.1)}]
    \coordinate (A) at (0.25,-.09);
    \coordinate (B) at (.17,.06);
    \coordinate (C) at (.33,.06);
    \draw[draw=none, fill={rgb,255:red,#1;green,#2;blue,#3}] (A) -- (B) -- (C) -- cycle;
    \draw[line width=0.8pt, color={rgb,255:red,#1;green,#2;blue,#3}] (\startlegend,0) -- (\enlegend,0);
  \end{tikzpicture}%
}
\newcommand{\lineuptri}[3]{
  \begin{tikzpicture}[baseline={(0,-.1)}]
    \coordinate (A) at (0.25,.09);
    \coordinate (B) at (.17,-.06);
    \coordinate (C) at (.33,-.06);
    \draw[draw=none, fill={rgb,255:red,#1;green,#2;blue,#3}] (A) -- (B) -- (C) -- cycle;
    \draw[line width=0.8pt, color={rgb,255:red,#1;green,#2;blue,#3}] (\startlegend,0) -- (\enlegend,0);
  \end{tikzpicture}%
}
\newcommand{\uptri}[3]{
  \begin{tikzpicture}[scale=1.7, baseline={(0,-.1)}]
    \coordinate (A) at (0.25,.09);
    \coordinate (B) at (.17,-.06);
    \coordinate (C) at (.33,-.06);
    \draw[draw=none, fill={rgb,255:red,#1;green,#2;blue,#3}] (A) -- (B) -- (C) -- cycle;
  \end{tikzpicture}%
}
\newcommand{\lwslegend}{\vspace{-1mm} \small \\
\linedowntri{0}{0}{0} Cold start \hspace{1mm}
\lineuptri{153}{102}{153} Nearest neighbor \hspace{1mm}\\
\linecirclehollow{228}{26}{28} Empirical L2WS \hspace{1mm}
\linesquare{228}{26}{28} PAC-Bayes L2WS bound \hspace{1mm}\\
}
\newcommand{\classicallegend}{\vspace{-1mm} \small \\
\linedowntri{0}{0}{0} Empirical success rate \hspace{1mm} \linelefttri{166}{86}{40} Worst-case guarantee\\
\linecircle{239}{118}{119}  Bound with $10$ samples \hspace{1mm}
\linesquare{135}{178}{212} Bound with $100$ samples \hspace{1mm}
\linerighttri{148}{207}{146} Bound with $1000$ samples \hspace{1mm}
}
\newcommand{\classicallegendquantile}{\vspace{-1mm} \small \\
\linedowntri{0}{0}{0} Empirical quantile \hspace{1mm} \linelefttri{166}{86}{40} Worst-case guarantee\\
\linecircle{239}{118}{119}  Bound with $10$ samples \hspace{1mm}
\linesquare{135}{178}{212} Bound with $100$ samples \hspace{1mm}
\linerighttri{148}{207}{146} Bound with $1000$ samples \hspace{1mm}
}
\newcommand{\sparsecodinglegend}{\vspace{-1mm} \small \\
\linedowntri{0}{0}{0} Empirical ISTA \hspace{1mm} \\
\lineplushollow{153}{102}{153} Empirical LISTA \hspace{1mm} 
\drawstar{153}{102}{153} PAC-Bayes LISTA bound \hspace{1mm}\\
\linerighttrihollow{55}{126}{184} Empirical TiLISTA \hspace{1mm} \lineuptri{55}{126}{184} PAC-Bayes TiLISTA bound \hspace{1mm}\\
\linecirclehollow{228}{26}{28} Empirical ALISTA \hspace{1mm} \linesquare{228}{26}{28} PAC-Bayes ALISTA bound \hspace{1mm}\\
\linediamondhollow{77}{175}{74} Empirical GLISTA \hspace{1mm} \diagonalcross{77}{175}{74} PAC-Bayes GLISTA bound \hspace{1mm}
}
\newcommand{\mamllegend}{\vspace{-1mm} \small \\
\linedowntri{0}{0}{0} Pretrained \hspace{1mm}
\linecirclehollow{228}{26}{28} Empirical MAML \hspace{1mm} \linesquare{228}{26}{28} PAC-Bayes MAML bound \hspace{1mm}
}
\newcommand{\mamlvisualslegend}{\vspace{-1mm} \small \\
\linestraight{228}{26}{28} Ground truth
\linestraight{55}{126}{184} Pretrained
\uptri{152}{78}{163} Used for gradients \\
\linestraight{77}{175}{74} Empirical MAML
\lineblock{255}{215}{170} Region with PAC-Bayes guarantee 
}
\newcommand{\rkfvisualslegend}{\vspace{-1mm} \small \\
\ccircle{253}{186}{187} Noisy measurements \quad
\ccircle{77}{175}{74} Optimal solution \\
\ccircle{0}{0}{255} SCS solution after $30$ steps \quad
\bigccircle{255}{231}{179} Region with PAC-Bayes guarantee
}
\newenvironment{talign*}
 {\let\displaystyle\textstyle\csname align*\endcsname}
 {\endalign}
\newcommand*{\KL}{{\rm KL}}
\newcommand*{\kl}{{\rm kl}}
\newcommand*{\KLinv}{{\rm kl}^{-1}}
\newcommand*{\param}{x}
\begin{document}
\maketitle
\begin{abstract}%
  We introduce a data-driven approach to analyze the performance of continuous optimization algorithms using generalization guarantees from statistical learning theory.
  We study classical and learned optimizers to solve families of parametric optimization problems.
  We build generalization guarantees for classical optimizers, using a sample convergence bound, and for learned optimizers, using the Probably Approximately Correct (PAC)-Bayes framework. 
  To train learned optimizers, we use a gradient-based algorithm to directly minimize the PAC-Bayes upper bound.  
  Numerical experiments in signal processing, control, and meta-learning showcase the ability of our framework to provide strong generalization guarantees for both classical and learned optimizers given a fixed budget of iterations.
  For classical optimizers, our bounds which hold with high probability are much tighter than those that worst-case guarantees provide.
  For learned optimizers, our bounds outperform the empirical outcomes observed in their non-learned counterparts.
\end{abstract}

\section{Introduction}
This paper studies continuous parametric optimization problems of the form 
\begin{equation}\label{prob:parametric_opt}
  \begin{array}{ll}
\mbox{minimize} & f(z, \param),\\
\end{array}
\end{equation}
where $z \in \reals^n$ is the decision variable, $\param \in \reals^d$ is the parameter or context drawn from some distribution $\mathcal{X}$, and $f : \reals^n \times \reals^d \rightarrow \reals \cup \{+\infty\}$ is the objective.
Problem~\eqref{prob:parametric_opt} implicitly defines a (potentially non-unique) solution $z^\star(x) \in \reals^n$.
Many applications require repeatedly solving problem~\eqref{prob:parametric_opt} with varying $\param$. 
For instance, in robotics and control, we repeatedly solve optimization problems to update the inputs (\eg, torques and thrusts) while the state (\eg, position and velocity) and the goals (\eg, reference trajectory) change~\citep{borrelli_mpc_book}.
This problem structure is also observed in other domains, such as sparse coding, where sparse signals are recovered from noisy measurements~\citep{lista}, and image restoration, where images are recovered from their corrupted versions~\citep{learned_dictionaries_images}.
These optimization problems usually do not admit closed-form solutions, so instead, iterative algorithms are needed to search for an optimal solution.
First-order methods, which only rely on first-order derivatives~\citep{fom_book}, are a popular approach to solve problem~\eqref{prob:parametric_opt} due to their cheap per-iteration cost.
Typically, first-order methods repeatedly apply a mapping $T: \reals^n \times \reals^d \rightarrow \reals^n$, obtaining iterations of the form
\begin{equation}\label{eq:fp}
  z^{k+1}(x) = T(z^k(x), x).
\end{equation}
Due to the limited time available to compute the solutions between instances of problem~\eqref{prob:parametric_opt}, in several applications we can only afford a fixed number of iterations of algorithm~\eqref{eq:fp}.
In such settings, obtaining strong performance guarantees on the quality of the solution within this iteration budget is essential, particularly for safety-critical applications. 

Analyzing the worst-case performance of the first-order method~\eqref{eq:fp} has been intensely studied in optimization literature by constructing asymptotic convergence rates of the algorithms (see ~\citep[\Sec~5]{fom_book} and~\citep[\Sec~2]{lscomo}).
In contrast, the performance estimation problem (PEP) \citep{pep,pep2} approach recently emerged as a powerful tool for the numerical computation of exact worst-case guarantees for first-order methods after only a finite number of iterations.
There are two main drawbacks of both the theoretical and computer-assisted worst-case analyses.
First, worst-case guarantees are pessimistic by definition; they provide guarantees for the most adverse problem instance among a class of problems, even if such instance occur very infrequently.
Second, these analyses typically consider a general class of functions (\eg, strongly convex and smooth functions) without leveraging the specific parametric nature inherent in problem~\eqref{prob:parametric_opt}.
In contrast to worst-case analysis, a probabilistic approach may provide less pessimistic results for applications where high-probability bounds are acceptable instead of strict worst-case guarantees~\citep{average_case}.

While guarantees for first-order methods are important for ensuring reliability, these algorithms often suffer from slow convergence in practice~\citep{zhang2020globally}.
To mitigate this limitation, the \emph{learning to optimize} paradigm~\citep{amos_tutorial,l2o,balcan2020data} takes advantage of the parametric setting of our interest, and uses machine learning to predict the solutions to problem~\eqref{prob:parametric_opt}, thereby significantly reducing the solve time compared with classical solvers (\ie, those without learned components).
A common strategy is to learn algorithm steps~\citep{lista} or initializations~\citep{l2ws}. 
Learned optimizers have shown promise in a range of domains, \eg, in inverse problems~\citep{lista}, convex optimization~\citep{l2ws,rlqp}, meta-learning~\citep{maml}, and non-convex optimization~\citep{e2e_survey,online_milliseconds}.
However, guaranteeing convergence of learned optimizers is a challenge since the algorithm steps have been replaced with learned variants~\citep{l2o,amos_tutorial}.
While asymptotic convergence can sometimes be guaranteed by construction~\citep{l2ws} or by safeguarding~\citep{safeguard_convex,safeguard_l2o}, these approaches do not provide performance bounds within a fixed number of iterations.
To address this shortcoming, several methodologies have been developed to construct generalization bounds for learned optimizers, for example, using Rademacher complexity~\citep{reasoning_layer,l2ws_l4dc} and the PAC-Bayes framework~\citep{bartlett2022generalization,pac_bayes_gen_l2o,l2ws}\reviewChanges{.}
However, such bounds tend to be loose (in many cases not reported), or sometimes, the generalization bound itself can be difficult to compute.


\paragraph{Our contributions.}
In this paper, we present a data-driven approach based on statistical learning theory to obtain performance guarantees for both classical and learned optimizers based on fixed-point iterations.
For classical optimizers, our approach differs from existing worst-case analysis frameworks.
Instead of worst-case guarantees, we construct data-driven guarantees that hold with high probability over the parametric family of optimization problems.
Meanwhile, for learned optimizers, we rely on PAC-Bayes theory~\citep{McAllester_PAC_Bayes,pac_bayes_intro} to provide generalization bounds, and moreover, use gradient-based methods to optimize the bounds themselves.
Our method is not limited to standard metrics to analyze optimization algorithms (\eg, distance to the optimal solution or fixed-point residual); rather, we provide guarantees on any metric as long as it can be evaluated.
We summarize our contributions as follows.
\begin{itemize}
  \item We provide probabilistic guarantees for classical optimizers in two steps: first, we run the optimizer for a given number of iterations on each problem instance in a given dataset; then, we apply a sample convergence bound by solving a one-dimensional convex optimization problem.
  \item We construct generalization bounds for learned optimizers using PAC-Bayes theory. 
  In addition, we develop a framework to learn optimizers by directly minimizing the PAC-Bayes bounds using gradient-based methods.
  After training, we calibrate the PAC-Bayes bounds by sampling the weights of the learned optimizer, and subsequently running the optimizer for a fixed number of steps for each problem instance in a given dataset. Then, we compute the PAC-Bayes bounds via solving two one-dimensional convex optimization problems.
  \item We apply our method to compute guarantees for classical optimizers on several examples including image deblurring \reviewChanges{and robust Kalman filtering}, \reviewChanges{illustrating that our bounds that hold with high probability outperform bounds from worst-case analyses.}
  We also showcase our generalization guarantees for several learned optimizers: LISTA~\citep{lista} and its variants~\citep{alista,glista}, learned warm starts~\citep{l2ws}, and model-agnostic meta-learning~\citep{maml}.
  Our generalization guarantees accurately represent the benefits of learning by outperforming the empirical performance observed in their non-learned counterparts.
\end{itemize}

\paragraph{Notation.}
We denote the set of non-negative vectors of length $n$ as $\reals^n_+$ and the set of vectors with positive entries of length $n$ as $\reals_{++}^n$.
We let the set of vectors consisting of natural numbers of length $n$ be $\mathbf{N}^n$.
The set of $n \times n$ symmetric matrices is denoted as $\symm^{n}$, and the set of $n \times n$ positive semidefinite matrices is denoted as $\symm^n_+$.
We denote the trace of a square matrix $A$ with $\Tr(A)$ and its determinant as $\det A$.
For a matrix $A$, we denote its spectral norm with $\|A\|_2$ and its Frobenius norm with $\|A\|_F$.
For two vectors $u \in \reals^n$ and $v \in \reals^n$, we denote its element-wise multiplication \reviewChanges{with} $u \odot v$.
We round a vector $v$ element-wise to the nearest integer with $\round(v)$.
For a vector $v \in \reals^n$, the diagonal matrix $V \in \mathbf{S}^{n}$ with entries $V_{ii} = v_i$ for $i=1, \dots, n$ is given by $\diag(v)$.
The all ones vector of length $d$ is denoted as $\mathbf{1}_d$.
For a vector $v$, the operation $\textbf{sign}(v)$ returns, for each element, a value of $+1$ if the corresponding element in $v$ is non-negative and $-1$ \reviewChanges{otherwise}.
For any closed and convex set $C$, we let $\dist_{C}: \reals^n \rightarrow \reals$ be the distance function: $\dist_{C}(x) = \min_{s \in C} \|s - x\|_2$.
We denote expectation and probability with $\mathbf{E}$ and $\mathbf{P}$ respectively.
Finally, for a boolean condition $c$, we let $\ones(x) = 1$ if $c$ is true, and $0$ otherwise.
\paragraph{Outline.}
We structure the rest of the paper as follows.
Section~\ref{sec:related_work} reviews the literature on i) guarantees on classical optimizers and ii) learned optimizers, focusing on existing methods and generalization guarantees associated with them.
In \Sec~\ref{sec:learned_optimizers}, we introduce the mechanics of both classical and learned optimizers.
In \Sec~\ref{sec:classical} we introduce our method for obtaining data-driven guarantees for classical optimizers.
We then focus on learned optimizers in the next two sections.
In \Sec~\ref{sec:gen_l2o}, we provide our generalization guarantees for learned optimizers derived from the PAC-Bayes framework.
Then in \Sec~\ref{sec:opt_pac_bayes}, we present a gradient-based algorithm designed to optimize the PAC-Bayes bound itself.
After that, in \Sec~\ref{sec:numerical_experiments}, we present numerous numerical experiments with data-driven guarantees for both classical and learned optimizers.
Finally, in \Sec~\ref{sec:conclusion} we conclude.

\section{Related work}\label{sec:related_work}

\paragraph{\reviewChanges{Theoretical and computer-assisted worst-case analysis.}}
Theoretical convergence analysis techniques for first-order methods typically focus on general classes of problems \reviewChanges{~\citep{lscomo,fom_book}}.
Many analyses provide upper bounds on the asymptotic rate of convergence for an algorithm~\citep{Giselsson2014LinearCA,Hong2012OnTL} that are tight in certain cases~\citep{nesterov}.
However, there are cases where upper bounds are not tight, because they either lack corresponding lower bounds~\citep{pep2} or they are only known up to a constant~\citep{ryu_ospep}.
\reviewChanges{E}ven if the asymptotic rate is tight (\ie, there exists at least one iteration where the worst-case rate is exactly met), it may be pessimistic: the algorithm may still perform significantly better during most iterations (\eg, the local convergence rate may be better than the global one~\citep{local_lin_conv}).
Most importantly, these analyses are fundamentally pessimistic and do not exploit the parametric structure.
A less-explored area is average-case analysis which analyzes \reviewChanges{an algorithm's performance} in expectation over a class of problems.
This approach, while not pessimistic, is designed to analyze the asymptotic convergence rate rather than provide numerical guarantees, and is further limited by its focus on unconstrained problems, as highlighted in existing works~\citep{average_case,avg_case_halting}.
\reviewChanges{
Computer-assisted approaches like PEP~\citep{pep,pep2,ryu_ospep} and \reviewChanges{integral quadratic constraints}~\citep{lessard2016IQC,Fazlyab2017AnalysisOO,Taylor2018LyapunovFF} have emerged as methods to obtain numerical worst-case guarantees, but they do not take advantage of the parametric nature of problem~\eqref{prob:parametric_opt}.}
To bridge this gap,~\citet{perfverifyqp} introduced a technique inspired by neural network verification~\citep{fazlyabsdp} to compute worst-case guarantees of fixed-point algorithms for parametric quadratic programs (QPs).
However, this approach deals with relaxations that become looser and more computationally expensive as the number of steps increases.
\reviewChanges{
Our probabilistic guarantees for classical optimizers complement worst-case analysis by demonstrating that, for those willing to accept high-probability bounds instead of stricter worst-case guarantees, our approach provides significantly stronger performance bounds over the parametric family.
}

\paragraph{\reviewChanges{Learning initializations and algorithm steps.}}
A common strategy in learning to optimize is to learn high-quality initializations.
\citet{l2ws_l4dc} and \citet{l2ws} unroll, \ie, differentiate through~\citep{algo_unrolling,reasoning_layer}, algorithm steps to learn warm starts, thereby reducing solve times for \reviewChanges{convex problems}.
\reviewChanges{Some works learn initializations in a decoupled fashion~\citep{warm_start_power_flow,mak2023learning,constraint_informed_traj_ws,learn_active_sets}, while others directly learn the optimal solution, and rather than warm-starting an algorithm, ensure feasibility and optimality with a correction step~\citep{donti2021dc3,deep_learning_mpc_karg,mpc_constrained_neural_nets}.}

An alternate approach is to learn the algorithm steps.
In convex optimization, learned algorithm steps have been shown to decrease solve times through learned hyperparameters~\reviewChanges{\citep{qp_accelerate_rho,rlqp,metric_learning}}, and learned acceleration schemes~\citep{neural_fp_accel_amos}.
\reviewChanges{While a lack of convergence guarantees was seen as a potential downside of learning algorithm steps~\citep{amos_tutorial}, some works have addressed this by safeguarding~\citep{safeguard_convex,safeguard_l2o}, providing convergence rate bounds~\citep{learn_mirror}, and constraining the updates~\citep{banert2021accelerated}.}
\reviewChanges{The idea of learning algorithm steps has also been used to solve non-convex problems~\citep{bai2022neural,sjolund2022graphbased,balcan_partition,balcan_cluster} and inverse problems~\citep{lista,alista,lista_cpss,hyperlista,Diamond2017UnrolledOW,plug_and_play_ryu,deep_unfolding_wireless}}.
\reviewChanges{Our method is} designed to integrate with these methods, optimizing and calibrating generalization guarantees for \emph{any} learned optimizer.



\paragraph{Generalization bounds in learned optimizers.}
Despite strong empirical outcomes in certain settings, learned optimizers 
lack generalization guarantees~\reviewChanges{\citep{l2o,amos_tutorial,mL2O}}.
To address this\reviewChanges{,}~\citet{pmlr-v206-sucker23a} and~\citet{Sucker2024LearningtoOptimizeWP} optimize PAC-Bayesian guarantees based on exponential families, but they assume exponential moment bounds, a condition 
difficult to verify in practice. 
In addition, they assume 
a specific 
update function: a multi-layer perceptron~\citep{Sucker2024LearningtoOptimizeWP} or a gradient step with learned hyperparameters~\citep{pmlr-v206-sucker23a}.
On the other hand, our method \reviewChanges{can} be used \emph{in conjunction} with any learned optimizer, including ones with learned initializations.
\reviewChanges{Other works} provide guarantees through Rademacher complexity~\citep{reasoning_layer,l2ws_l4dc}, \reviewChanges{the PAC-Bayes framework~\citep{bartlett2022generalization,pac_bayes_gen_l2o,l2ws}, and pseudo-dimesion bounds~\citep{balcan_gen_guarantees}}.
\reviewChanges{Yet, these} bounds tend to be loose or difficult to compute.
We construct numerical bounds by optimizing the PAC-Bayes bounds themselves, a strategy previously used for classification~\citep{nonvacuous_pac_bayes} and control~\citep{majumdar2021pac}.


\paragraph{Meta-learning.}
Meta-learning~\citep{hospedales2020metalearning,meta_learning_survey} overlaps with learning to optimize when the parametric problem is a learning task~\citep{l2o}.
\reviewChanges{B}oth learned initializations~\citep{maml} and algorithm updates~\citep{learning_to_optimize_malik,learn_learn_gd_gd,VeLO} have been effectively used in meta-learning.
\reviewChanges{Methods have been developed to improve generalization in practice~\citep{almeida2021generalizable,l2gen_l2o} and to provide theoretical generalization bounds~\citep{pmlr-v80-amit18a,pmlr-v97-balcan19a}.
Yet existing bounds tend to be challenging to evaluate or loose.}
Addressing this issue, \citet{farid_metalearning} derive a novel PAC-Bayes bounds, focusing on practically useful guarantees.
\reviewChanges{Our method is more general in that it can be applied to not only learning tasks, but also optimization and inverse problems.}


\section{Classical and learned optimizers}\label{sec:learned_optimizers}
In this section, we delve into the mechanics of classical and learned optimizers, laying the groundwork for the bounds we provide later.
In \Sec~\ref{subsec:classical} we explain how to run and evaluate classical optimizers, focusing on fixed-point optimization algorithms.
For learned optimizers, we first explain how to run and evaluate them given fixed weights in \Sec~\ref{subsec:mechanics_l2o}, and then how to train them to learn the weights in \Sec~\ref{subsec:train_l2o}.

\subsection{Running and evaluating classical optimizers}\label{subsec:classical}
As it turns out, problem~\eqref{prob:parametric_opt} can often be written as an equivalent fixed-point problem
\begin{equation}\label{prob:l2ws}
  \begin{array}{ll}
\mbox{find} & z \quad \mbox{subject to} \quad z = T(z,x),
\end{array}
\end{equation}
where $T: \reals^n \times \reals^d \rightarrow \reals^n$ is the fixed-point operator.
Indeed, nearly all convex optimization problems can be reformulated as a finding the fixed-point of an operator~\citep{lscomo} which often represents the optimality conditions~\reviewChanges{\citep{COSMO}}.
We denote the set of fixed-points for the fixed-point problem parametrized by $x$ to be $\fix T_x$.
\reviewChanges{Note that} the ground truth solution $z^\star(x)$ satisfies the fixed-point condition $z^\star(x) \in \fix T_x$.
For classical optimizers, we focus on the parametric fixed-point problem~\eqref{prob:l2ws} as it is a convenient way of analyzing worst-case performance~\citep{infeas_detection,Giselsson2014LinearCA}.
This in turn allows for a direct comparison of our guarantees with those previously established.

\paragraph{Initializations.}
In classical optimizers, the initialization is not learned \reviewChanges{and is typically set} to the zero vector, \ie, $z^0(x) = 0$, which is often referred to as a \emph{cold start}.
In contexts where we have an estimate of the solution, it is common to \emph{warm-start} the problem from this point.
For example, in \reviewChanges{model predictive control}~\citep{borrelli_mpc_book}, where similar instances of the same problem are solved sequentially, the problems are often warm-started from the previous solution shifted by one time index~\citep{nonlinear_mpc}.

\paragraph{Algorithm steps.}
\reviewChanges{The iterates $z^{k+1}(x) = T(z^k(x), x)$ in \eqref{eq:fp} are a popular way to solve problem~\eqref{prob:l2ws}.}
Many classical optimizers consist of fixed-point iterations, \eg, gradient descent, proximal gradient descent~\citep{prox_algos}, and ADMM~\citep{Boyd_admm}.

\paragraph{Evaluation metrics.}
A variety of metrics can be used to evaluate the performance of algorithms. 
A standard metric is the fixed-point residual~\citep[\Sec~2.4]{lscomo}
\begin{equation*}
\phi^{\rm fp}(z, x) = \|T(z, x) - z\|_2,
\end{equation*}
which quantifies the gap between successive iterations. 
Such metrics, assess the quality of candidate solutions for problems parametrized by $x$. 
To determine if an optimization algorithm meets specific performance benchmarks, we introduce the 0--1 error function
\begin{equation}\label{eq:zero_one_classical}
e(x) = \mathbf{1}\left(\phi \left(z^k(x), x \right) \geq \epsilon \right),
\end{equation}
assigning a value of 1, if the performance metric $\phi(z,x)$ exceeds a specified threshold $\epsilon$ after $k$ steps, indicating a failure to meet the desired criteria, and 0 otherwise.
We later provide guarantees for this error function $e(x)$, for \emph{any} underlying metric $\phi(z,x)$ in \Sec~\ref{sec:classical}.

\paragraph{Convergence.}
Under certain conditions on the operator $T$, the fixed-point iterates in~\eqref{eq:fp} are known to converge to a fixed-point, \ie, $\lim_{k \rightarrow \infty} \|z^k(x) - z^\star(x)\|_2 = 0$ for some $z^\star(x)$ in the set of fixed-points $\fix T_x$.
For instance, if the operator $T$ is contractive, linearly convergent, or averaged (see Appendix~\ref{sec:op_theory} for their definitions), and the set of fixed-points is non-empty, then the iterates are guaranteed to converge~\reviewChanges{\citep[\Sec~2.4]{lscomo}}.
\reviewChanges{We refer the reader to Appendix~\ref{sec:op_theory} for the rates of convergence for these cases.}

\subsection{Running and evaluating learned optimizers}\label{subsec:mechanics_l2o}
The goal of learning to optimize methods is to accelerate an algorithm to quickly find a high-quality candidate solution $\hat{z}_\theta(x)$ for problem~\eqref{prob:parametric_opt}.
Learned optimizers typically learn either the initial point or the steps for a given algorithm, by adjusting some weights~$\theta \in \reals^p$.

\paragraph{Learned initializations.}
Some learned optimizers focus on learning the initializations for algorithms~\citep{l2ws,maml}.
Typically, this involves predicting an initial point $z^0 \in \reals^n$ from the parameter $x$ \reviewChanges{with a function $h_\theta: \reals^d \rightarrow \reals^n$}:
\begin{equation*}
  z^{0}_\theta(x) = h_\theta(x).
\end{equation*}

\paragraph{Learned algorithm steps.}
Another common strategy in learned optimizers is to learn the steps of the algorithm, which can be represented as
\begin{equation*}
z^{k+1}_\theta(x) = T_\theta(z^k_\theta(x), x).
\end{equation*}
Here\reviewChanges{,} the function $T_\theta: \reals^n \times \reals^d \rightarrow \reals^n$ is the learned update rule.
\reviewChanges{Note that the iterates $z_\theta^k(x)$ depend on the parameter $x$ and the weights $\theta$.}

\paragraph{Evaluation metrics.}
The evaluation metric $\phi$ depends on the task at hand.
For inverse problems, a common metric of interest is the squared distance to the ground truth solution $\phi^{\rm mse}(z,\param) = \|z - z^\star(x)\|^2_2$.
In meta-learning, a common measure is the performance on a learning task over an unseen dataset $\mathcal{D}^{\rm test}$ and learning objective $\mathcal{L}$~\citep{maml,learning_to_optimize_malik}, which fits \reviewChanges{into} our framework with the performance metric $\phi^{\rm meta}(z,x) = \mathcal{L}(z, \mathcal{D}^{\rm test})$.
As in the classical optimizers case, we consider the 0--1 error function associated with an underlying metric $\phi$.
In this case, the error function depends on the weights $\theta$:
\begin{equation}\label{eq:zero_one_loss_l2o}
  e_\theta(x) = \mathbf{1}\left(\phi \left(z^k_\theta(x), x \right) \geq \epsilon \right).
\end{equation}
It is important to remark that the metric $\phi$ can also be different from the objective $f$ from problem~\eqref{prob:parametric_opt}.
Our generalization guarantees are designed to provide bounds for the error function $e_\theta(x)$ with \emph{any} underlying metric $\phi$.

\paragraph*{Convergence for learned optimizers.}
\reviewChanges{When} the algorithm steps are replaced with learned variants, convergence may not be guaranteed~\citep{l2o,amos_tutorial}.

\subsection{Training learned optimizers}\label{subsec:train_l2o}
In this subsection, we formulate the learning to optimize training problem, beginning with the loss functions.
Depending on the task at hand, the loss can take varying forms, generally falling into two categories: regression-based and objective-based~\citep{amos_tutorial}.
\begin{description}
  \item[Regression-based loss.] The \emph{regression-based loss} measures the distance to a ground truth solution $z^\star(x)$, \ie,
\begin{equation}\label{eq:reg_loss}
  \ell^{\rm reg}_\theta(\param) = \|\hat{z}_\theta(x) - z^\star(\param)\|_2^2.
\end{equation}
  \item[Objective-based loss.] The \emph{objective-based loss} directly penalizes the objective $f$:
\begin{equation}\label{eq:obj_loss}
  \ell^{\rm obj}_\theta(\param) = f(\hat{z}_\theta(x), x).
\end{equation}
Unlike the regression-based loss, the objective-based loss does not require access to ground truth solutions.
\end{description}

\paragraph{The learning to optimize training problem.}
Given the loss function, algorithm steps, and initialization we formulate the training problem as
\begin{equation}\label{prob:simplified_l2o}
  \begin{array}{ll}
\mbox{minimize} & \mathbf{E}_{x \sim \mathcal{X}} \ell_\theta(x)\\
  \mbox{subject to} &z^{k+1}_\theta(x) = T_\theta(z^k_\theta(x), \param),\quad k=0,1,\dots, K-1 \\
  &z^0_\theta(x) = h_\theta(\param).
\end{array}
\end{equation}
Here, $K$ is the number of algorithm steps used during training, and the loss function $\ell_\theta(x)$ is either chosen to be the regression-based loss $\ell^{\rm reg}_\theta(x)$ or the objective-based loss $\ell^{\rm obj}_\theta(x)$.
The loss function is applied to the $K$-th iterate $\hat{z}_\theta(x) = z^K_\theta(x)$, but, in principle, it could be a (weighted) sum of a loss function applied all the iterates $z^0_\theta(x), \dots, z^K_\theta(x)$.
Since in general we do not know the distribution $\mathcal{X}$ to solve problem~\eqref{prob:simplified_l2o}, we approximate the expectation over $N$ independent and identically distributed (i.i.d.) training samples $S = \{x_i\}_{i=1}^N$.
In \Sec~\ref{sec:gen_l2o}, we modify the empirical training problem to provide guarantees to unseen data.

\section{Probabilistic guarantees for classical parametric optimization}\label{sec:classical}
In this section, we use statistical learning theory to provide probabilistic guarantees for classical parametric optimization.
In particular, we focus on the fixed-point problem setting~\eqref{prob:l2ws}, and obtain performance guarantees on the quality of the iterates from~\eqref{eq:fp}, $z^{k+1}(x) = T(z^k(x), x)$.
Recall that the parameter $x$ is drawn in an i.i.d. fashion from distribution $\mathcal{X}$. We first provide bounds for algorithms initialized to the zero vector (\ie, cold-started) and then consider how to adapt the bounds to include warm starts.

\paragraph{Obtaining guarantees via statistical learning theory.}
Given an underlying metric~$\phi$, a number of algorithm steps $k$, and a tolerance $\epsilon$, we consider the 0--1 error $e(x)$ given by \Eqn~\eqref{eq:zero_one_classical} which takes a value of 1 if $\phi$ is above $\epsilon$ after $k$ steps and 0 otherwise.
There are three steps to obtain bounds on the risk $\risk$ given $N$ sample parameters \reviewChanges{$S$} as depicted in Figure~\ref{fig:classical_procedure}.
First, for each sample $x$ we run $k$ fixed-point steps starting from the zero vector to obtain $z^k(x)$.
Second, we compute the empirical risk $\emprisk$, the fraction of problems that fail to reach the desired tolerance in $k$ steps.
Last, we apply the sample convergence bound from \Thm~\ref{thm:sample_conv_bound} in \reviewChanges{Appendix \Sec~\ref{sec:prob_background}} to bound the risk $\risk$ with probability at least $1 - \delta$:
\begin{equation}\label{eq:classical_lem}
\risk \leq \KLinv \biggl(\emprisk ~\Bigg|~  \frac{\log (2 / \delta)}{N} \biggr).
\end{equation}
We remark that other concentration bounds could be used in~\eqref{eq:classical_lem}, and that we could instead bound $\mathbf{E}\phi(z^k(x),x)$ directly instead of the risk $\mathbf{E}e(x)$.
Typically, using a concentration bound requires an upper bound on the metric of interest, a condition trivially satisfied by the error function.
The choice to bound the error function is driven by this convenience, which also proves particularly beneficial in the analysis of learned optimizers, as discussed in Section~\ref{sec:gen_l2o}.

\begin{figure}[!h]
  \centering
  \includegraphics[width=1.0\linewidth]{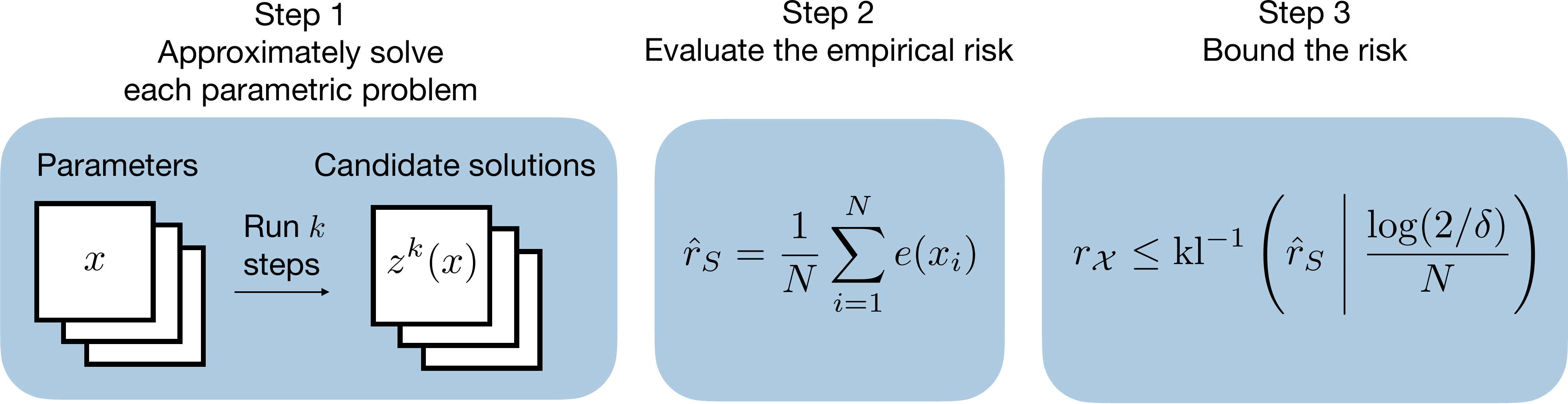}
\centering
\caption{
The procedure to generate probabilistic guarantees for classical optimizers.
Given $N$ parameter samples, we first approximately solve each parametric problem by running $k$ fixed-point steps in step $1$.
Then given an error function $e(x)$ with an underlying metric $\phi$, number of algorithm steps $k$, and tolerance $\epsilon$, we evaluate the empirical risk $\emprisk$ in step $2$.
Lastly in step $3$, we apply the sample convergence bound to bound the risk $\risk$ with high probability.
}
\label{fig:classical_procedure}
\end{figure}

\paragraph{Incorporating warm starts.}
It is natural to wonder if the bound~\eqref{eq:classical_lem} can be adapted to algorithms initialized from warm starts rather than from the zero vector.
Indeed this adaptation is feasible, as long as the errors $e(x)$ are i.i.d. random variables.
One setting where this condition is met, what we call the nearest-neighbor warm start~\citep{l2ws} setting, assumes access to a base set of $N^{\rm base}$ problem parameters and a corresponding optimal solution for each one.
The nearest-neighbor warm start initializes the sample problem with the given optimal solution of the nearest of the base problems measured by distance in terms of its parameter $x \in \reals^d$.
Since the metric $e(x)$ is still i.i.d., the sample convergence bound from inequality~\eqref{eq:classical_lem} holds.

\paragraph{Strengthening the bound with worst-case guarantees.}
One downside of the guarantee given by the sample convergence bound is that a non-zero term $(1/N)\log(2/\delta)$ in inequality~\eqref{eq:classical_lem} prevents the risk $\risk$ from ever reaching zero even if the empirical risk $\emprisk$ is zero and $k$ is very large.
If the underlying metric is the fixed-point residual, the worst-case guarantees from~\eqref{eq:lin_conv_rate} and~\eqref{eq:avg_rate} can provide a stronger result: a risk of exactly zero that holds with probability one, for a large enough number of iterations (provided an upper bound on the distance from the initialization to the set of optimal solutions).
In this case, we simply adapt our bounds to take the better of the worst-case guarantee and the probabilistic bound in \Eqn~\eqref{eq:classical_lem}.

\section{Generalization bounds for learned optimizers}\label{sec:gen_l2o}
In this section, we derive generalization bounds for learned optimizers using tools from PAC-Bayes theory. 
In particular, we adapt \reviewChanges{Maurer's} bound from \Thm~\ref{thm:maurer} \reviewChanges{presented in Appendix \Sec~\ref{sec:prob_background}} to allow for a data-dependent prior and then apply it for learned optimizers.

\reviewChanges{Maurer's} bound from\reviewChanges{~\eqref{eq:pac_bayes_kl_inverse}} is used to provide bounds where the weights are drawn from a distribution, while learned optimizers (as outlined in \Sec~\ref{sec:learned_optimizers}) are deterministic.
To reconcile this, we adapt the learning to optimize framework so that the weights of the learned optimizers $\theta$ are drawn from a posterior distribution~$P$.
Then, considering the 0--1 error metric $e_\theta(x)$ from \Eqn~\eqref{eq:zero_one_loss_l2o} as a function of the loss, \ie, $e_\theta(x) = \mathbf{1}(\ell_\theta(x) \ge \epsilon)$, we aim to bound the expected risk of the posterior $\exprisk(P)$ defined in \Eqn~\eqref{eq:exp_risk}.
As in the case of classical optimizers, we choose to bound the error function rather than the loss.
This approach is particularly useful for learned optimizers, as obtaining an upper bound on the loss, an important assumption in the PAC-Bayes framework, can be difficult due to the lack of convergence guarantees~\citep{amos_tutorial}.
Bounding the error allows us to bypass this complication, as an upper bound of one is trivially given.




\paragraph{The posterior.}
To obtain the KL divergence in closed form from~\Eqn~\reviewChanges{\eqref{eq:pac_bayes_kl_inverse}}, we consider posterior and prior distributions on the algorithm weights that are multivariate normal distributions.
We further enforce a diagonal covariance structure for both.
Our posterior takes the form $\mathcal{N}(w,{\bf diag}(s))$ where the mean is $w \in \reals^p$ and the covariance is ${\bf diag}(s) \in \symm^p_+$.
We use the notation $\mathcal{N}_{w,s} = \mathcal{N}(w, {\bf diag}(s))$ for convenience.

\paragraph{The prior.}
In the next section, we would like to optimize over the bounds themselves; however, recall that the vanilla \reviewChanges{Maurer} bound from \Thm~\ref{thm:maurer} requires that the prior is fixed and independent of the training samples.
Our strategy is to consider a data-dependent prior where the mean is fixed and the variance is optimized over.
We then round the variance to a pre-defined grid where we use a union-bound argument to satisfy the assumptions of \Thm~\ref{thm:maurer}.
This strategy has been taken in the literature, for example in~\citet{nonvacuous_pac_bayes} and~\citet{langford_union_prior} where the covariance matrix takes the form $\Lambda = \lambda I$ for a scalar $\lambda$.
We generalize this approach, by instead partitioning the weights into $J$ groups and optimizing over a \emph{vector} $\lambda \in \reals^J_+$ rather than a scalar.
For the $j$-th group (where $j \in \{1, \dots, J\}$), we let $\mathcal{I}_j$ be the corresponding index set of weights. 
We construct the diagonal prior variance $\Lambda \in \symm^p_+$ by assigning the value $\lambda_j$ to the indices in group $\mathcal{I}_j$, \ie,
\begin{equation*}
\diag(\Lambda)_{\mathcal{I}_j} = \lambda_j \mathbf{1}_{|\mathcal{I}_j|}, \quad \text{for } j=1, \dots, J.
\end{equation*}
Here, $|\mathcal{I}_j|$ is the cardinality of the set $\mathcal{I}_j$.
This partitioning approach allows for a more nuanced adaptation to weights associated with different groups.
Consider, for instance, LISTA-type algorithms, where distinct weights are used for shrinkage thresholds and step sizes~\citep{lista,alista} (see \Sec~\ref{subsec:lista} for more details). 
Intuitively, accommodating different priors for each group can be advantageous because different weight groups can have different orders of magnitudes.
Hence, it is natural that allowing for different variances across partitions is beneficial.

\paragraph{Main generalization bound for learned optimizers.}
We now give our main generalization bound theorem which uses the union-bound argument to allow for a data-dependent prior.
Specifically, we enforce that the prior variance term $\lambda \in \reals_+^J$ takes the form $\lambda = \lambda^{\rm max} \exp(-a / b)$ for some $a \in \mathbf{N}^J$.
We design \reviewChanges{Maurer's} bound to hold for a given $a$ with probability 
\begin{equation}\label{eq:delta_a}
  \delta_a = \left(\frac{6}{\pi^2}\right)^J \frac{\delta}{\prod_{j=1}^J a_j^2},
\end{equation}
for some pre-determined $\delta \in (0,1)$.
Then with probability at least $1 - \delta$, \reviewChanges{Maurer's} bound holds uniformly for all $a \in \mathbf{N}^J$.
This strategy generalizes the union-bound arguments made in the literature~\citep{nonvacuous_pac_bayes,langford_union_prior} to allow for $J$ to be larger than one.
We formalize this result with the following theorem.

\begin{theorem}\label{thm:gen_thm}
  Consider a set of \reviewChanges{$N$ i.i.d. samples $S$}.
  Let the prior mean $w_0 \in \reals^p$, and the prior variance hyperparameters $\lambda^{\rm max} \in \reals_+$ and $b \in \reals_+$, be independent of the samples.
  Then for any $\delta \in (0, 1)$, posterior distribution $\mathcal{N}_{w,s}$, and vector $a \in \mathbf{N}^J_+$, with probability at least $1 - \delta$ the following bound holds:
\begin{equation}\label{eq:our_pac_bayes_bound}
  \reviewChanges{
  \exprisk(\mathcal{N}_{w,s}) \leq {\rm kl}^{-1}\left(\expemprisk(\mathcal{N}_{w,s}) ~\vert~ B(w,s,\lambda) \right).}
\end{equation}
Here, $\lambda = \lambda^{\rm max} \exp(-a / b)$ and the regularization term is
\begin{equation}\label{eq:pen}
  \reviewChanges{
  B(w,s,\lambda) = \frac{1}{N} \left(\KL \left(\mathcal{N}_{w,s}\parallel\mathcal{N}(w_0,\Lambda)\right) + \sum_{j=1}^J 2 \log \left(b \log \frac{\lambda^{\rm max}}{\lambda_j}\right) + J \log \frac{\pi^2}{6} + \log \frac{2 \sqrt{N}}{\delta}\right)}.
\end{equation}
\end{theorem}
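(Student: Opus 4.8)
The plan is to apply Maurer's bound (Theorem~\ref{thm:maurer}) once for each fixed grid index $a \in \mathbf{N}^J$, each time with its own confidence level $\delta_a$, and then combine these countably many statements with a single union bound. The obstacle this circumvents is that the prior variance $\lambda$ is ultimately tuned on the data, which would violate the sample-independence of the prior required by Theorem~\ref{thm:maurer}. Restricting $\lambda$ to the grid $\lambda = \lambda^{\rm max}\exp(-a/b)$ indexed by $a \in \mathbf{N}^J$ makes the family of admissible priors countable, so that after the union bound the inequality holds simultaneously at every grid point, and hence at whatever data-dependent $a$ is eventually selected.

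First I would fix an arbitrary $a \in \mathbf{N}^J$. This pins down $\lambda$ and therefore the diagonal prior $\mathcal{N}(w_0,\Lambda)$, which depends only on the sample-independent quantities $w_0$, $\lambda^{\rm max}$, $b$ together with the fixed $a$; it is thus a legitimate sample-independent prior. Applying Theorem~\ref{thm:maurer} with confidence $\delta_a$ then yields, with probability at least $1-\delta_a$ over $S$, the $\KLinv$ bound of~\eqref{eq:our_pac_bayes_bound} but with the confidence term $\tfrac{1}{N}\big(\KL(\mathcal{N}_{w,s}\parallel\mathcal{N}(w_0,\Lambda)) + \log\tfrac{2\sqrt N}{\delta_a}\big)$ in place of $B(w,s,\lambda)$, uniformly over all posteriors $\mathcal{N}_{w,s}$. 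Next I would take a union bound over $a$: the total failure probability is at most $\sum_{a \in \mathbf{N}^J}\delta_a$, and the choice~\eqref{eq:delta_a} is engineered so that, via the Basel identity $\sum_{n\ge 1}n^{-2}=\pi^2/6$ applied in each coordinate, this sum collapses exactly to $\delta$:
\begin{equation*}
  \sum_{a \in \mathbf{N}^J}\delta_a = \Big(\tfrac{6}{\pi^2}\Big)^{J}\delta\prod_{j=1}^{J}\sum_{a_j\ge 1}\tfrac{1}{a_j^2} = \Big(\tfrac{6}{\pi^2}\Big)^{J}\delta\,\Big(\tfrac{\pi^2}{6}\Big)^{J} = \delta .
\end{equation*}

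It remains to match the confidence term to $B(w,s,\lambda)$. Inverting the grid relation gives $a_j = b\log(\lambda^{\rm max}/\lambda_j)$, so expanding $\log(2\sqrt N/\delta_a)$ and substituting reproduces the three extra summands of~\eqref{eq:pen}---namely $\sum_{j}2\log(b\log(\lambda^{\rm max}/\lambda_j))$, $J\log(\pi^2/6)$, and $\log(2\sqrt N/\delta)$---so the right-hand side collapses to $B(w,s,\lambda)$. The one genuinely subtle point is the logical order of quantifiers: because the union bound certifies a \emph{single} event of probability at least $1-\delta$ on which every grid inequality holds at once, we may instantiate the bound at the data-chosen $a$ at no further probabilistic cost. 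I expect this to be the step a careful reader scrutinizes most, but it follows immediately from the uniform-over-$a$ guarantee, so no additional work is needed for the final substitution.
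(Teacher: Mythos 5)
Your proposal is correct and follows essentially the same route as the paper's proof: apply Maurer's bound at each fixed grid point $a$ with confidence $\delta_a$, expand $\log(2\sqrt{N}/\delta_a)$ and substitute $a_j = b\log(\lambda^{\rm max}/\lambda_j)$ to recover $B(w,s,\lambda)$, then union bound over all $a \in \mathbf{N}^J$ using $\sum_{i\ge 1} i^{-2} = \pi^2/6$ so the total failure probability collapses to $\delta$. Your explicit remark about the quantifier order (the uniform-over-$a$ event permitting a data-dependent choice of $a$) is the same mechanism the paper relies on, just stated more carefully.
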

Using \Eqn~\eqref{eq:kl_normal}, the KL term $\KL \left(\mathcal{N}_{w,s}\parallel\mathcal{N}(w_0,\Lambda)\right)$ simplifies to
\begin{equation*}
  -\frac{1}{2}\left(p + \mathbf{1}^T_p \log s\right) + \frac{1}{2} \sum_{j=1}^J \left(\frac{1}{\lambda_j}\|s_{\mathcal{I}_j}\|_1 + \frac{1}{\lambda_j}\|w_{\mathcal{I}_j}-(w_0)_{\mathcal{I}_j}\|^2  + |\mathcal{I}_j| \log \lambda_j \right),
\end{equation*}
where $\log s$ is applied element-wise. 
See Appendix~\ref{proof:gen_thmproof} for the proof.




\section{Optimizing the generalization bounds}\label{sec:opt_pac_bayes}
In this section, we show how to optimize the PAC-Bayes bounds obtained in \Sec~\ref{sec:gen_l2o}.
In \Sec~\ref{subsec:opt_prior} we present the penalized training problem whose objective aligns with the PAC-Bayes generalization bound.
The objective of this problem includes a KL inverse term which involves solving a one-dimensional convex optimization problem.
In \Sec~\ref{sec:diff_kl_inv} we show how to use implicit differentiation to differentiate through the KL inverse.
This technique allows us to implement a gradient-based learning algorithm, which we present in \Sec~\ref{subsec:learning_algo}.
In \Sec~\ref{subsec:tighten}, we show how to calibrate the PAC-Bayes bounds after training, thereby providing generalization guarantees on the expected risk.

\subsection{The penalized training problem}\label{subsec:opt_prior}
Our overarching strategy to obtain strong generalization guarantees is to use gradient-based methods to optimize the PAC-Bayes bounds from \Thm~\ref{thm:gen_thm}.
Gradient-based methods emerge as a natural choice to optimize our PAC-Bayes bounds as they are often used to train learned optimizers~\citep{l2o,algo_unrolling}.
Indeed, the learned optimizers that we provide generalization guarantees for in our numerical experiments in \Sec~\ref{sec:numerical_experiments} are all trained with gradient-based methods in their original works.
Nevertheless, this approach gives rise to several obstacles that we will address in this section, culminating in the formulation of a \emph{penalized training problem}.

The first obstacle is that the decision variable $\lambda$, which corresponds to the prior variance, must belong to a discrete set as described in \Sec~\ref{sec:gen_l2o}.
To simplify the training, we treat $\lambda$ as a continuous variable, and then after training, round its value to the discrete set~\citep{nonvacuous_pac_bayes}.
The second obstacle is that the 0--1 loss function $e_\theta(x)$ is non-differentiable.
To address this, we replace the loss in $e_\theta(x)$ with its logistic transformation  
\begin{equation}\label{eq:logistic}
  \surrogate_\theta(x) = \frac{1}{1 + \exp(-\ell_\theta(x))}.
\end{equation}
The transformed loss $\surrogate_\theta(x)$ achieves two desired properties; it is differentiable and lies in the range $(0,1)$ (hence, a good proxy for the error function).
We denote the expected empirical risk of the logistic loss over a distribution $P$ as
\begin{equation*}
  \surrogateexpemprisk(P) =  \mathbf{E}_{\theta \sim P} \frac{1}{N} \sum_{i=1}^N \surrogate_\theta(x_i).
\end{equation*}
At this point, the optimization problem can be formulated as
\begin{equation}\label{prob:init_training_prob}
  \begin{array}{ll}
\mbox{minimize} &  \KLinv \left(\surrogateexpemprisk(\mathcal{N}_{w,s}) ~|~ B(w,s,\lambda) \right) \\
\mbox{subject to} & 0 \leq \lambda \leq \lambda^{\rm max} \\
&s \geq 0,
\end{array}
\end{equation}
where the decision variables are $w \in \reals^p$, $s \in \reals_+^p$, and $\lambda \in \reals_+^J$.
In practice, a third and particularly practical obstacle arises with the initial formulation of our optimization problem.
There is an imbalance between the expected empirical risk of the logistic loss $\surrogateexpemprisk(\mathcal{N}_{w,s})$ and the regularizer $B(w,s,\lambda)$.
The regularizer $B(w,s,\lambda)$ can be disproportionally large while the quantity $\surrogateexpemprisk(\mathcal{N}_{w,s})$ is always in the range $(0,1)$.
We observe that in many cases, applying gradient-based methods to solve problem~\eqref{prob:init_training_prob} tends to reduce the regularizer $B(w,s,\lambda)$ to zero, typically by making $w$ close to zero, resulting in suboptimal solutions.

\paragraph{The penalized training problem.}
To remedy this problem, we add a penalty term to the objective to penalize the distance between~$B(w,s,\lambda)$ and a hyperparameter~$B^{\rm target} \in \reals_{++}$.
Now, we are ready to define our \emph{penalized training problem}:
\begin{equation}\label{prob:training_prob}
  \begin{array}{ll}
\mbox{minimize} &  \KLinv \left(\surrogateexpemprisk(\mathcal{N}_{w,s}) ~|~ B(w,s,\lambda) \right) + \mu \left(B(w,s,\lambda) - B^{\rm target}\right)^2 \\
\mbox{subject to} & 0 \leq \lambda \leq \lambda^{\rm max} \\
&s \geq 0.
\end{array}
\end{equation}
Here, $\mu \in \reals_{++}$ is a large constant term that weights the penalty.
The value of $B^{\rm target}$ will control the gap between the expected empirical risk and the expected risk.
Specifically, if $B(w,s,\lambda) = B^{\rm target}$, then the expected risk can be upper bounded with $R_\mathcal{X}(\mathcal{N}_{w,s}) \leq \hat{R}_S(\mathcal{N}_{w,s}) + \sqrt{B^{\rm target} / 2}$.
In practice, we cross-validate over values for $B^{\rm target}$, as detailed in Appendix~\ref{subsec:crossval}.

\subsection{Differentiating through the KL inverse}\label{sec:diff_kl_inv}
In order to use gradient-based methods to solve the penalized training problem~\eqref{prob:training_prob}, we need to compute gradients through the KL inverse $p = \KLinv(q ~|~ c)$.
However, the output $p$ is not an explicit function of the inputs $q$ and $c$.
Rather, $p$ is \emph{implicitly} defined by $q$ and $c$, and is obtained by solving the geometric program~\eqref{prob:kl_inv}.
Previous approaches that use gradient-based methods to minimize a PAC-Bayes bound, such as those employed by \citet{nonvacuous_pac_bayes} and \citet{majumdar2021pac}, sidestep this challenge by applying Pinsker's inequality from \Eqn~\eqref{eq:pinsker} to transform $p$ into an explicit function of $q$ and $c$. 
\reviewChanges{However, using Pinsker's inequality can lead to a less precise bound compared to directly solving the KL inverse problem.}
In contrast to these methods, our approach leverages the technique of implicit differentiation, supported by the implicit function theorem \reviewChanges{\citep[\Thm~1B.1]{implicit_function}}. 
Given the KL inverse ${\rm kl}^{-1}(q ~|~ c)$, the implicit derivatives can be written as~\citep{reeb2018learning}
\begin{align*}
  \frac{\partial \; {\rm kl}^{-1}(q ~|~ c)}{\partial q} &= \frac{{\rm kl}^{-1}(q ~|~ c)(1 - {\rm kl}^{-1}(q ~|~ c))}{{\rm kl}^{-1}(q ~|~ c) - q} \left(\log \frac{q}{{\rm kl}^{-1}(q ~|~ c)} + \log \frac{1 - {\rm kl}^{-1}(q ~|~ c)}{1 - q}\right) \\
  \frac{\partial \; {\rm kl}^{-1}(q ~|~ c)}{\partial c} &= \frac{{\rm kl}^{-1}(q ~|~ c)(1 - {\rm kl}^{-1}(q ~|~ c))}{{\rm kl}^{-1}(q ~|~ c) - q}.
\end{align*}

\paragraph{Smoothness of the KL inverse.}
We note that for $q \in (0, 1)$ and $c \in \reals_{++}$, the KL inverse ${\rm kl}^{-1}(q ~|~ c)$ lies in the range $(q, 1)$~\citep[Appendix A]{reeb2018learning}.
Under these conditions, these derivatives exist, \ie, ${\rm kl}^{-1}(q ~|~ c)$ is differentiable with respect to both $q$ and $c$.
Since we use the logistic loss from~\Eqn~\eqref{eq:logistic}, the value for $q$ is always in the range $(0,1)$ (as opposed to taking a value strictly in $\{0,1\}$).
Additionally, the regularizer $c=B(w,s,\lambda)$ is always strictly positive.
Hence our implicit layer is always differentiable and thus amenable to gradient-based optimization methods.

\subsection{PAC-Bayes learning algorithm}\label{subsec:learning_algo}
In this subsection we present a learning algorithm based on gradient descent to solve the penalized training problem~\eqref{prob:training_prob}.
We cannot apply vanilla gradient descent to solve problem~\eqref{prob:training_prob} yet because we cannot compute the expected empirical logistic risk $\hat{R}_S^{\rm logistic}(\mathcal{N}_{w,s})$ nor its gradients efficiently.
We can, however, compute the gradient of its unbiased estimate $(1 / N) \sum_{i=1}^N \surrogate_{w'}(x_i)$, where $w' = w + \xi \odot \sqrt{s}$ for $\xi \sim \mathcal{N}(0, I_p)$.
In each iteration we take an i.i.d. copy of $\xi$ and a step in the direction of the negative gradient of the function
\begin{equation*}
  C_S(w,s,\lambda,w') = {\rm kl}^{-1}\left(\surrogateexpemprisk(w') ~|~ B(w,s,\lambda)\right) + \mu (B(w,s,\lambda) - B^{\rm target})^2.
\end{equation*}
To ensure the non-negativity of the variable $s$ and $\lambda$, we optimize over variables $\zeta \in \reals^d$ and $\eta \in \reals^J$, and set $s = \exp(\zeta)$ and $\lambda = \exp(\eta)$.
As mentioned in \Sec~\ref{sec:gen_l2o}, after the gradient descent algorithm terminates, we must round the prior $\lambda$ to fit into the pre-determined grid.
To do so, we compute $a^\star =  {\bf round}(b \log (\lambda^{\rm max} / \lambda))$ and then $\lambda^\star = \lambda^{\rm max} \exp(- a^\star / b)$.
We summarize this discretization via the function
\begin{equation}\label{eq:round_prior}
  \textbf{roundPrior}(\lambda, \lambda^{\rm max}, b) = \lambda^{\rm max} \exp \left(\frac{-\textbf{round} \left(b \log (\lambda^{\rm max} / \lambda) \right)}{b} \right),
\end{equation}
and set the rounded prior with $\lambda^\star = \textbf{roundPrior}(\lambda, \lambda^{\rm max}, b)$.

\begin{algorithm}[tb]
  \caption{PAC-Bayes Learning to solve problem~\eqref{prob:training_prob}}
  \label{alg:learning_algo}
\begin{algorithmic}[1]
  \State {\bfseries Inputs:} \\
  Target penalty: $B^{\rm target} \in \reals_{++}$\\
  Prior hyperparameters: $\lambda^{\rm max} \in \reals_{++}$, $b \in \reals_{++}$ \\
  Initial weights: $w_0 \in \reals^p$, $s_0 \in \reals^p_+$, $\lambda_0 \in (0,\lambda^{\rm max})^J$ \Comment{Random initialization}\\
  Desired probability: $\delta \in (0,1)$ \\
  Learning rate: $\gamma \in \reals_{++}$ \\
  Number of epochs: $M \in \mathbf{N}$
  \State {\bfseries Procedure:}
  \State $(w,\zeta,\nu) = (w_0,\log(s_0),\log(\lambda_0))$
  \For{$i=1$ {\bfseries to} $M$} \Comment{Loop over epochs}
    \State sample $\xi \sim \mathcal{N}(0,I_p)$
    \State $w' = w + \xi \odot \sqrt{\exp(\zeta)}$
    \Comment{Sample from $\mathcal{N}_{w,s}$}
    \State $\displaystyle \begin{bmatrix}
      w \\
      \zeta \\
      \nu
    \end{bmatrix} = \begin{bmatrix}
      w \\
      \zeta \\
      \nu
    \end{bmatrix} - \gamma
    \begin{bmatrix}
      \nabla_w C_S(w,\exp(\zeta),\exp(\nu),w') \\
      \nabla_\zeta C_S(w,\exp(\zeta),\exp(\nu),w') \\
      \nabla_\nu C_S(w,\exp(\zeta),\exp(\nu),w')
    \end{bmatrix} $ \Comment{Gradient step}  
    \EndFor 
    \State ($w^\star, s^\star, \lambda^\star) = (\exp(\zeta), \exp(\nu), \textbf{roundPrior}(\exp(\nu), \lambda^{\rm max}, b))$ \Comment{Round prior:~\Eqnshort~\eqref{eq:round_prior}}
  \State {\bfseries Outputs:} \\
  Learned weights $(w^\star$, $s^\star$, $\lambda^\star)$
\end{algorithmic}
\end{algorithm}

\subsection{Calibrating the PAC-Bayes bounds}\label{subsec:tighten}
The training procedure returns the learned weights: the posterior mean $w^\star$, the posterior variance $s^\star$, and the prior variance $\lambda^\star$.
Together, these determine the posterior distribution $P = \mathcal{N}_{w^\star,s^\star}$ and the regularizer $B(w^\star, s^\star, \lambda^\star)$.
To obtain the final generalization bounds after the training procedure terminates, we need to \emph{calibrate} the PAC-Bayes bounds for a given metric $\phi$, number of algorithm steps $k$, and tolerance $\epsilon$.

Conceptually, we would like to apply the McAllester bound to bound the expected risk $\exprisk(P)$ in terms of the expected empirical risk $\expemprisk(P)$.
However, this is not immediately possible since evaluating the expected empirical risk $\expemprisk(P)$ is intractable.
To circumvent this issue, we generate $\hat{P}$ a Monte Carlo approximation of $P$, compute the Monte Carlo estimate of the expected empirical risk $\hat{R}_S(\hat{P})$, and bound the expected empirical risk $\expemprisk(P)$ using inequality~\eqref{eq:langford_bound}.
We fully detail and enumerate the steps needed to calibrate the bounds below.

First, we draw $H$ i.i.d.\ samples, denoted by $\{\theta_i\}_{i=1}^H$, from the posterior distribution~$P$. 
We then construct the Monte Carlo approximation $\hat{P} = (1/H) \sum_{j=1}^H \delta_{\theta_j}$, where $\delta_{\theta_j}$ represents the Dirac delta function centered at $\theta_j$.
We then run $k$ steps of the learned optimizer for each of the $H$ samples for each of the $N$ training problems.
Second, we compute the Monte Carlo approximation of the expected empirical risk $\hat{R}_S(P)$ as follows:
\begin{equation}\label{eq:sample_avg}
  \hat{R}_S(\hat{P}) = \frac{1}{NH} \sum_{i=1}^N \sum_{j=1}^H e_{\theta_j}(x_i),
\end{equation}
where the error function $e$ is based on the underlying metric $\phi$, number of steps $k$, and tolerance $\epsilon$.
Last, we apply two PAC-Bayes bounds to obtain the final bounds on the expected risk.
Using the sample convergence bound from \Thm~\ref{thm:sample_conv_bound}, the following inequality holds with probability at least $1 - \omega$ for $\omega \in (0,1)$:
\begin{equation}\label{eq:e_bar}
  \expemprisk(P) \leq \bar{R}_{S}(P)= \KLinv \left(\hat{R}_S(\hat{P}) \bigline \frac{1}{H} \log \frac{2}{\omega}\right).
\end{equation}
We then apply a union bound and our \reviewChanges{\Thm~\ref{thm:gen_thm}} to obtain the final bound on the expected risk
\begin{equation}\label{eq:final_bound}
  \exprisk(P) \leq R^\star_S(P) = {\rm kl}^{-1}(\bar{R}_{S}(P) ~|~ B(w^\star,s^\star,\lambda^\star)),
\end{equation}
which holds with probability $1 - \delta - \omega$.

We outline this calibration procedure in Algorithm~\ref{alg:calibrating}, and we also depict the entire process to obtain generalization bounds for learned optimizers, including the training and calibration phases, in Figure~\ref{fig:l2o_procedure}.
We \reviewChanges{note} that the number of steps $k$ that the bounds are computed for need not be the same as the number of steps $K$ that are used to train the weights.
Moreover, the metric $\phi$ does not need to be the same as the metric used in the loss function.
\reviewChanges{We remark that some of choices made to facilitate training (\eg, using the logistic regression loss instead of the 0--1 loss) do not affect the validity of our bounds.
This is because our main generalization bound is applied \emph{after} training is complete.
Furthermore, by applying union bounds appropriately (\ie, for the prior variance, the hyperparameter $B^{\rm target}$, and the Monte Carlo approximation of the expected empirical risk), we ensure the validity of the final bound.
}
Indeed, in the numerical experiments in \Sec~\ref{sec:numerical_experiments}, we will calibrate the bounds for many different tolerances and algorithm steps, and sometimes, multiple metrics.

\begin{algorithm}[tb]
  \caption{Calibrating the PAC-Bayes bounds}
  \label{alg:calibrating}
\begin{algorithmic}[1]
  \State {\bfseries Inputs:} \\
  Learned weights: $w^\star$, $s^\star$, $\lambda^\star$ 
  \Comment{Output of Algorithm~\ref{alg:learning_algo}}\\
  Desired probabilities: $\delta, \omega \in (0,1)$ \\
  Metric: $\phi$ \\
  Number of algorithm steps: $k$\\
  Desired tolerance: $\epsilon$\\
  Number of samples: $H$ \Comment{For Monte Carlo approx.}
  \State {\bfseries Procedure:}
  \State Generate $H$ samples $\{\theta_j\}_{j=1}^H$ from $\mathcal{N}_{w^\star,s^\star}$ \Comment{Monte Carlo samples}
      \State $\hat{R} = (1/(NH)) \sum_{j=1}^H  \sum_{i=1}^N e_{\theta_j}(x_i) $ \Comment{Empirical est. \Eqnshort~\eqref{eq:sample_avg} (metric $\phi$, $k$ steps, tol. $\epsilon$)}
      \State $\bar{R} = \KLinv(\hat{R} ~|~ (1/H) \log (2 / \omega))$ \Comment{Sample convergence bound: \Eqnshort~\eqref{eq:e_bar}}
      \State $R^\star = \KLinv(\bar{R} ~|~ B(w^\star,s^\star,\lambda^\star))$ \Comment{\reviewChanges{main Thm.~\ref{thm:gen_thm} bound}: \Eqnshort~\eqref{eq:final_bound}}
  \State {\bfseries Outputs:}\\
   $R^\star$ \Comment{The final bound on the expected risk}
\end{algorithmic}
\end{algorithm}
\begin{figure}[!h]
  \centering
  \includegraphics[width=1.0\linewidth]{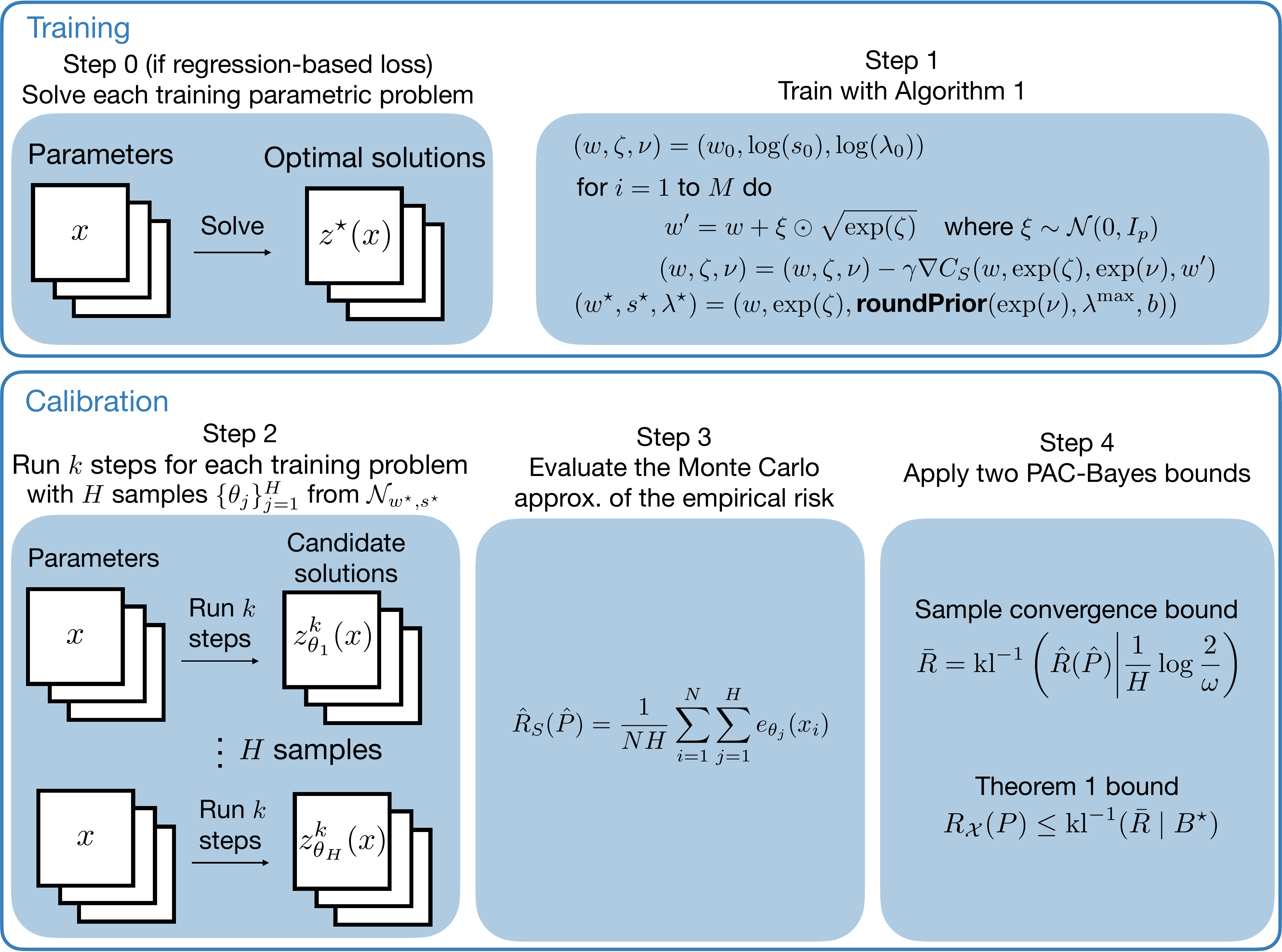}
\centering
\caption{
The two-phase procedure to generate generalization guarantees for learned optimizers for a metric $\phi$, number of algorithm steps $k$, and tolerance $\epsilon$.
The first phase is the training phase.
If the loss function is the regression-based loss, then we solve each parametric problem in step $0$ as these are needed in order to train.
In step $1$, we train the architecture to optimize the PAC-Bayes guarantee over $M$ epochs using Algorithm~\ref{alg:learning_algo}.
We also round the prior according to \Eqn~\eqref{eq:round_prior}.
Then we enter the second, calibration phase.
In step $2$ we sample weights $\{\theta_j\}_{j=1}^H$ from the distribution $\mathcal{N}_{w^\star, s^\star}$ and run $k$ algorithm steps for each training problem and each weight sample $\theta_j$.
In step $3$ we compute the Monte Carlo approximation of the empirical expected risk $\hat{R}_S(\hat{P})$.
In step $4$, we bound the expected risk $\exprisk(P)$ by applying a sample convergence bound from \Eqn~\eqref{eq:e_bar} and then \reviewChanges{\Thm~\ref{thm:gen_thm}} where the regularization term is $B^\star = B(w^\star,s^\star,\lambda^\star)$.
}
\label{fig:l2o_procedure}
\end{figure}


\section{Experiments}\label{sec:numerical_experiments}
In this section, we illustrate the effectiveness of our guarantees for both classical and learned optimizers with numerical experiments.
The code to reproduce our results is available at
\begin{equation*}
\text{\url{https://github.com/stellatogrp/data\_driven\_optimizer\_guarantees}}.
\end{equation*}
In \Sec~\ref{subsec:experiments_classical} we apply our framework from \Sec~\ref{sec:classical} to provide guarantees for classical fixed-point optimization algorithms in the context of parametric optimization.
In \Sec~\ref{subsec:experiments_learned} we apply our training algorithm and generalization guarantees from \Sec~\ref{sec:opt_pac_bayes} to obtain strong bounds for a variety of learned optimizers.



\subsection{Guarantees for classical parametric optimization}\label{subsec:experiments_classical}
\reviewChanges{In this subsection, we apply our method to obtain generalization guarantees to image deblurring in \Sec~\ref{subsubsec:iamge_deblurring} and robust Kalman filtering in~\Sec~\ref{subsubsec:rkf}.}
We focus on solving convex QPs and convex conic programs, for which we use the Operator Splitting Quadratic Program (OSQP) solver~\citep{osqp} and the Splitting Conic Solver (SCS)~\citep{scs_quadratic} respectively as the fixed-point algorithm.
The fixed-point vector $z$ consists of both primal and dual variables; see~\citep[Table 1]{l2ws} for more details on how this vector is constructed for OSQP and SCS.
In each of the examples, we vary the number of algorithm steps and tolerances and report a lower bound on the success rate $1 - \risk$.
Then, by combining these bounds for the risk across many tolerances, we construct upper \emph{quantile bounds} on the fixed-point residual at each algorithm step and compare them against the empirical quantile performance.
See \Sec~\ref{sec:quantiles} for more details on how we construct the quantiles.
We show that our probabilistic guarantees are much tighter than bounds given through worst-case theoretical analysis.
Finally, where relevant, we repeat our analysis to include task-specific metrics instead of the fixed-point residual, again providing risk and quantile bounds.
The probabilistic results hold with probability at least $0.9999$ for the risk and with probability at least $0.9919$ for the quantiles.
See Appendix \Sec~\ref{sec:prob_details} for more details.
We provide guarantees for $10$, $100$, and $1000$ samples in each example.

\paragraph{Worst-case guarantees.}
The worst-case guarantee is determined by our best estimate; although, we remark that a better numerical result may be possible to obtain.
Indeed, it can be difficult to check when certain conditions are met to guarantee a given convergence rate (\eg, linear convergence for ADMM)~\citep{discern_lin_rate}.
To generate our best estimate of the worst-case guarantee, we proceed as follows.
We first estimate a value for $\dist_{\fix T_x}(z^0)$ (where $z^0 = 0$ is the initial point) by taking the largest optimal solution in terms of its $2$-norm across problem instances and multiplying it by $1.1$.
Then we generate the worst-case guarantees based on the property of the fixed-point operator given in the rates from~\eqref{eq:lin_conv_rate} and ~\eqref{eq:avg_rate}.
Both OSQP and SCS are algorithms based on Douglas-Rachford splitting~\citep{infeas_detection,scs_quadratic}.
For both algorithms, we pick hyperparameters so that the fixed-point algorithm is $(1/2)$-averaged~\citep{infeas_detection,scs_quadratic}.
For OSQP, we set the penalty and relaxation parameters to be one; see~\citet{infeas_detection} for more details.
For SCS, we enforce identity scaling and set the relaxation parameter to be one; see~\citet{scs_quadratic} for more details.
Hence, the sublinear rate with $\alpha=1 / 2$ from~\eqref{eq:avg_rate} holds as a worst-case guarantee in all three instances.
This rate can be improved upon if additional conditions (\eg, strong convexity) are satisfied.
We verified our theoretical bounds with PEP~\citep{pep} using the PEPit toolbox~\citep{pepit}.
However, that approach does not scale well to more than $100$ number of iterations because, in contrast to ours, it requires solving a semidefinite \reviewChanges{program}~\citep{pep}.
It takes nearly $59$ minutes to provide guarantees for $70$ iterations for algorithms where the iterations are averaged, for which the guarantees are within $6 \%$ of the theoretical bound.
Because of the lack of scalability and how close the PEP bounds are to the theoretical guarantees, we omit them in our tables and plots.
\reviewChanges{We emphasize that our probabilistic analysis is not meant to replace worst-case analyses; rather, it is meant to offer complementary insights.
The comparisons illustrate the significant gap between worst-case bounds and the behavior observed on average over a parametric problem family.
}


\subsubsection{Image deblurring}\label{subsubsec:iamge_deblurring}
The first task we consider is image deblurring.
Given a blurry image $x \in \reals^n$, the goal is to recover the original image $y \in \reals^n$.
The vectors $b$ and $y$ are created by stacking the columns of the matrix representations of their images.
We formulate the image deblurring problem as the QP
\begin{equation*}
  \begin{array}{ll}
  \label{prob:img_deblur}
  \mbox{minimize} & \|A y - x\|_2^2 + \rho \|y\|_1 \\
  \mbox{subject to} & 0 \leq y \leq 1,
  \end{array}
\end{equation*}
where $y \in \reals^n$ is the decision variable.
In this problem, the matrix $A \in \reals^{n \times n}$ functions as a Gaussian blur operator embodying a two-dimensional convolutional operator.
The regularizer coefficient $\rho \in \reals_{++}$,  balances the importance of the fidelity term $\|A y - x\|_2^2$, relative to the $\ell_1$ penalty.
\paragraph{Task-specific metric.}
In signal recovery tasks, it is common to report the normalized mean squared error (NMSE) in decibel (dB) units~\citep{l2o} between the $z$ and the original signal $\tilde{z}$ given by
\begin{equation}\label{eq:nmse}
  {\rm NMSE}(z,\bar{z}) = 10 \log_{10} \frac{\|z - \tilde{z}\|_2^2}{\|\tilde{z}\|_2^2}.
\end{equation}



\paragraph{Numerical example.}
We consider handwritten letters from the EMNIST dataset~\citep{emnist}.
We apply a Gaussian blur of size $8$ to each letter and then add i.i.d. Gaussian noise with standard deviation $0.001$.
The hyperparameter weighting term is $\rho = 10^{-4}$.

\paragraph{Results.}
Figure~\ref{fig:mnist_fp} shows our results.
In this case, the objective is strongly convex, which can be used to guarantee linear convergence~\citep{Giselsson2014LinearCA}. 
Therefore, we calculate the most optimistic linear convergence factor possible based on the performance in the samples and combine it with \eqref{eq:avg_rate} to estimate the worst-case guarantee.
\begin{figure}[!h]
  \begin{subfigure}[t]{.99\linewidth}
      \centering
      \includegraphics[width=\linewidth]{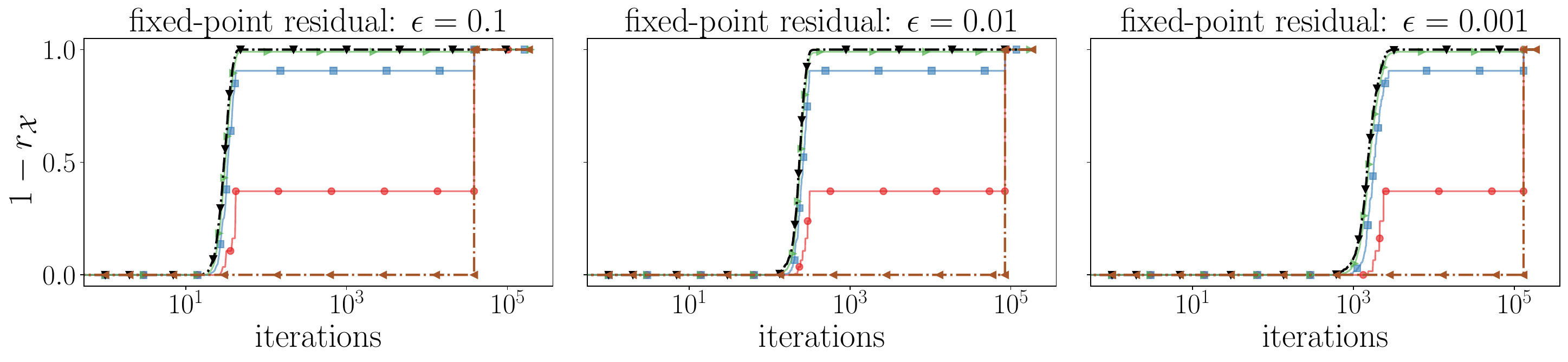}
    \end{subfigure}\\
    \begin{subfigure}[t]{.99\linewidth}
      \centering
      \includegraphics[width=\linewidth]{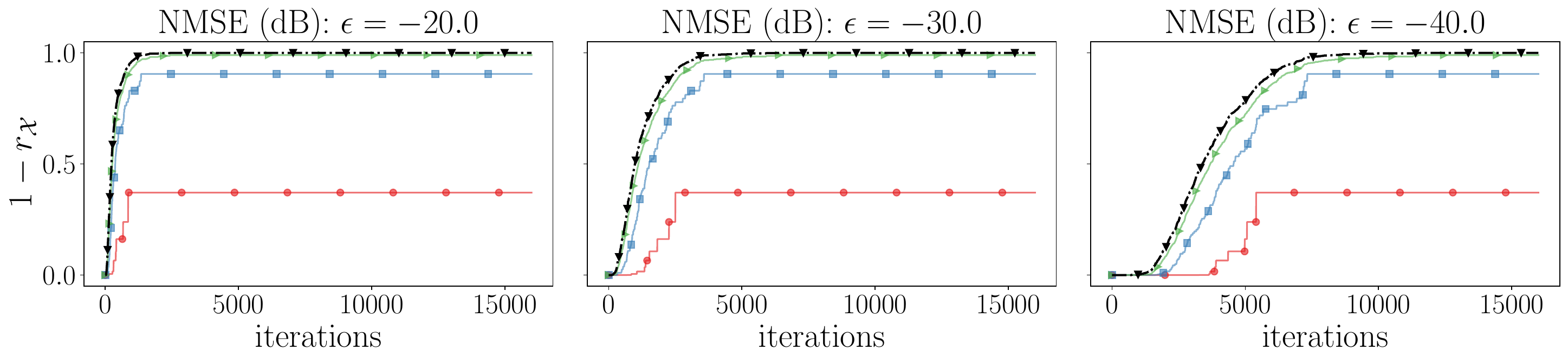}
    \end{subfigure}
  \centering
  \classicallegend
  \caption{
    Probabilistic lower bounds of the success rate for image deblurring.
    The top row shows results for the fixed-point residual (fp. res.) and the bottom row shows bounds for the quantile.
    For both metrics, the lower bounds on the success rate are tight for $N=1000$ samples.
    }
    \label{fig:mnist_fp}
\end{figure}

\begin{table}[!h]
  \centering
  \footnotesize
    \renewcommand*{\arraystretch}{1.0}
  \caption{The quantile results for image deblurring. 
  Left: fixed-point residual.
  Right: NMSE.
  We report the number of iterations to reach given tolerances.
  For different quantiles (Qtl.) and tolerances (Tol.), we compare the empirical (Emp.) and estimated worst-case quantities against our probabilistic bounds with a varying number of samples $N$.
  The worst-case bound holds independently of the quantile.
  }
  \label{tab:mnist_tab}
  \vspace*{-3mm}
  \adjustbox{max width=\textwidth}{
    \begin{tabular}{ccccccc}
      \midrule
      \multicolumn{7}{c}{Fixed-point residual}\\
      \midrule
      Qtl.&
    Tol.&
    Worst-&
    Emp.&
    \multicolumn{3}{c}{Bound}
    \\
    {} & {} & Case &{} & $N=10$ & $N=100$ & $N=1000$\\
    \midrule
    \csvreader[head to column names, late after line=\\]{./data/mnist/mnist_fp.csv}{
      quantile=\colQ,
    tol=\colA,
    worst=\colW,
    cold_start=\colB,
    samples_10=\colC,
    samples_100=\colD,
    samples_1000=\colE,
    }{\colQ & \colA & \colW & \colB & \colC &\colD &\colE}
    \bottomrule
  \end{tabular}
  \quad
  \begin{tabular}{cccccc}
    \midrule
    \multicolumn{6}{c}{NMSE}\\
    \midrule
    Qtl.&
  Tol.&
  Emp.&
  \multicolumn{3}{c}{Bound}
  \\
  {} & {} & {} & $N=10$ & $N=100$ & $N=1000$\\
  \midrule
  \csvreader[head to column names, late after line=\\]{./data/mnist/mnist_nmse.csv}{
    quantile=\colQ,
  tol=\colA,
  cold_start=\colB,
  samples_10=\colC,
  samples_100=\colD,
  samples_1000=\colE,
  }{\colQ & \colA & \colB & \colC &\colD &\colE}
  \bottomrule
\end{tabular}
  }
\end{table}

\begin{figure}[!h]
    \begin{subfigure}[t]{.99\linewidth}
      \centering
      \includegraphics[width=\linewidth]{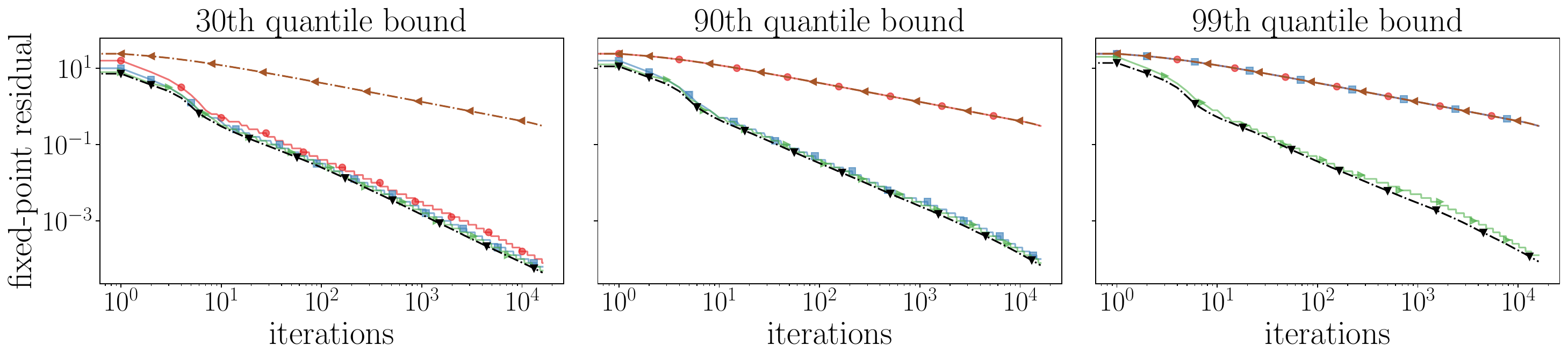}
    \end{subfigure}\\
  \begin{subfigure}[t]{.99\linewidth}
    \centering
    \includegraphics[width=\linewidth]{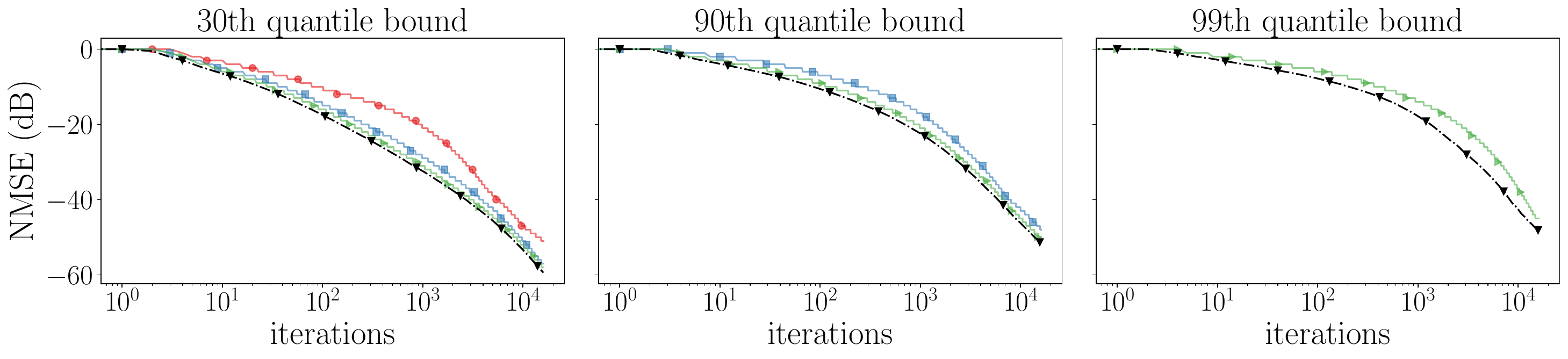}
  \end{subfigure}
  \centering
  \classicallegendquantile
  \caption{
    Probabilistic guarantees for OSQP to solve the image deblurring problem.
  The top row shows results for the fixed-point residual (fp. res.) and the bottom row shows results for the NMSE.
  The quantile bounds for both quantities improve as the number of samples increases.
  }
  \label{fig:mnist_custom}
\end{figure}

\begin{table}[!h]
  \centering
  \footnotesize
    \renewcommand*{\arraystretch}{1.0}
  \caption{The quantile results for robust Kalman filtering. 
  Left: fixed-point residual.
  Right: max Euclidean distance.
  We report the number of iterations to reach given tolerances.
  For different quantiles (Qtl.) and tolerances (Tol.), we compare the empirical (Emp.) and estimated worst-case quantities against our probabilistic bounds with a varying number of samples $N$.
  The worst-case bound holds independently of the quantile.
  }
  \label{tab:mnist}
  \vspace*{-3mm}
  \adjustbox{max width=\textwidth}{
    \begin{tabular}{ccccccc}
      \multicolumn{7}{c}{Fixed-point residual}\\
      \midrule
      Qtl.&
    Tol.&
    Worst-&
    Emp.&
    \multicolumn{3}{c}{Bound}
    \\
    {} & {} & Case &{} & $N=10$ & $N=100$ & $N=1000$\\
    \midrule
    \csvreader[head to column names, late after line=\\]{./data/robust_kalman/robust_kalman_fp.csv}{
      quantile=\colQ,
    tol=\colA,
    worst=\colW,
    cold_start=\colB,
    samples_10=\colC,
    samples_100=\colD,
    samples_1000=\colE,
    }{\colQ & \colA & \colW & \colB & \colC &\colD &\colE}
    \bottomrule
  \end{tabular}
  \quad
  \begin{tabular}{cccccc}
    \multicolumn{6}{c}{Max Euclidean distance}\\
    \midrule
    Qtl.&
  Tol.&
  Emp.&
  \multicolumn{3}{c}{Bound}
  \\
  {} & {} & {} & $N=10$ & $N=100$ & $N=1000$\\
  \midrule
  \csvreader[head to column names, late after line=\\]{./data/robust_kalman/robust_kalman_custom.csv}{
    quantile=\colQ,
  tol=\colA,
  cold_start=\colB,
  samples_10=\colC,
  samples_100=\colD,
  samples_1000=\colE,
  }{\colQ & \colA & \colB & \colC &\colD &\colE}
  \bottomrule
\end{tabular}
  }
\end{table}

\subsubsection{Robust Kalman filtering}\label{subsubsec:rkf}
Kalman filtering~\citep{kalman_filter} is a popular method to predict system states in the presence of noise in dynamic systems.
In this example, we consider robust Kalman filtering~\citep{rkf} which mitigates the impact of outliers and model misspecifications to track a moving vehicle from noisy data location as in~\citet{neural_fp_accel_amos}.
The linear dynamical system with matrices $A \in \reals^{n_s \times n_s}$, $B \in \reals^{n_s \times n_u}$, and~$C \in \reals^{n_o \times n_s}$ is given by
\begin{equation}\label{eq:rkf_dynamics_eqn}
    s_{t+1} = As_t + Bw_t, \quad y_t = C s_t + v_t,\quad \text{for}\quad t=0,1,\dots,
\end{equation}
where $s_t \in \reals^{n_s}$ is the state, $y_t \in \reals^{n_o}$ is the observation, $w_t \in \reals^{n_u}$ is the input, and $v_t \in \reals^{n_o}$ is a perturbation to the observation.
We aim to recover the state $x_t$ from the noisy measurements $y_t$ by solving the following problem:
\begin{equation}
\begin{array}{ll}
\label{prob:rkf}
\mbox{minimize} & \sum_{t=1}^{T-1} \|w_t\|_2^2 + \mu \psi_{\rho} (v_t) \\
\mbox{subject to} & s_{t+1} = A s_t + B w_t \quad t=0, \dots, T-1 \\
& y_t = C s_t + v_t \quad t=0, \dots, T-1.\\
\end{array}
\end{equation}
Here, the Huber penalty function~\citep{huber} with parameter $\rho \in \reals_{++}$ that robustifies against outliers is given by
\[\psi_{\rho}(a) = \begin{cases}
      \|a\|_2 & \|a\|_2 \leq \rho \\
      2 \rho \|a\|_2 - \rho^2 & \|a\|_2 \geq \rho. \\
   \end{cases}
\]
The given quantity $\mu \in \reals_{++}$ weights this penalty term.
The decision variables are the $s_t$'s, $w_t$'s, and $v_t$'s, while the parameters are the observed $y_t$'s: $x = (y_0, \dots, y_{T-1})$.
We formulate problem~\eqref{prob:rkf} as a second-order cone program, and use SCS~\citep{scs_quadratic} to solve it.

\paragraph{Task-specific metric.}
In Kalman filtering, traditional metrics such as the fixed-point residual, which encompasses both primal and dual variables, may not fully capture specific aspects of state recovery accuracy.
In light of this context, we propose a task-specific metric aimed at quantifying the fidelity of state estimation.
This metric measures the deviation of the algorithmically recovered states, $s_1, \dots, s_T$, (extracted from the fixed-point vector $z$) after $k$ iterations, from their corresponding optimal states, $s_1^\star(x), \dots, s_T^\star(x)$:
\begin{equation}\label{eq:max_Euclidean}
\phi(z, x) = \max_{t=1, \dots, T} \|s_t - s^\star_t(x)\|_2.
\end{equation}
The associated error metric indicates success when each recovered state lies within an $\epsilon$-radius ball centered at its optimal counterpart.

\paragraph{Numerical example.}
We follow the setup from~\citet{neural_fp_accel_amos} where $n_s=4$, $n_o=2$, $n_u=2$, $\mu=2$, $\rho=2$, and $T=50$.
The dynamics matrices are
\begin{equation*}
\small
    \label{eq:rkf_dynamics}
    A = \begin{bmatrix}
        1 & 0 & (1 - (\gamma/2)\Delta t) \Delta t & 0\\
        0 & 1 & 0 & (1 - (\gamma/2)\Delta t) \Delta t\\
        0 & 0 & 1 - \gamma \Delta t & 0\\
        0 & 0 & 0 & 1 - \gamma \Delta t
    \end{bmatrix},\hspace{0mm}
    B = \begin{bmatrix}
        1 / 2\Delta t^2 & 0 \\
        0 & 1 / 2\Delta t^2\\
        \Delta t & 0 \\
        0 & \Delta t
    \end{bmatrix}, \hspace{0mm}
    C = \begin{bmatrix}
        1 & 0 & 0 & 0\\
        0 & 1 & 0 & 0\\
    \end{bmatrix},
\end{equation*}
where $\Delta t=0.5$ and $\gamma=0.05$ are fixed to be respectively the sampling time and the velocity dampening parameter.
We generate true trajectories $\{x_0^*, \dots, x_{T-1}^*\}$ of the vehicle by first letting $x_0^* = 0$.
Then we sample the inputs as $w_t \sim \mathcal{N}(0, 0.01)$ and $v_t \sim \mathcal{N}(0, 0.01)$.
The trajectories are then fully defined via the dynamics equations in \Eqn~\eqref{eq:rkf_dynamics_eqn} with the sampled $w_t$'s and $v_t$'s.

\paragraph{Results.}
To the best of our knowledge, the tightest guarantees that can be obtained for this problem on the fixed-point residual are given the bound from the averaged iterations~\eqref{eq:avg_rate}.
We visualize our bounds on the maximum Euclidean distance metric from \Eqn~\eqref{eq:max_Euclidean} in Figure~\ref{fig:rkf_visuals} with a ball of radius $0.1$ around the optimal state.
We obtain probabilistic guarantees on the error metric that says that all of the recovered states are within their respective balls.

\begin{figure}[!h]
  \begin{subfigure}[t]{.99\linewidth}
      \centering
      \includegraphics[width=\linewidth]{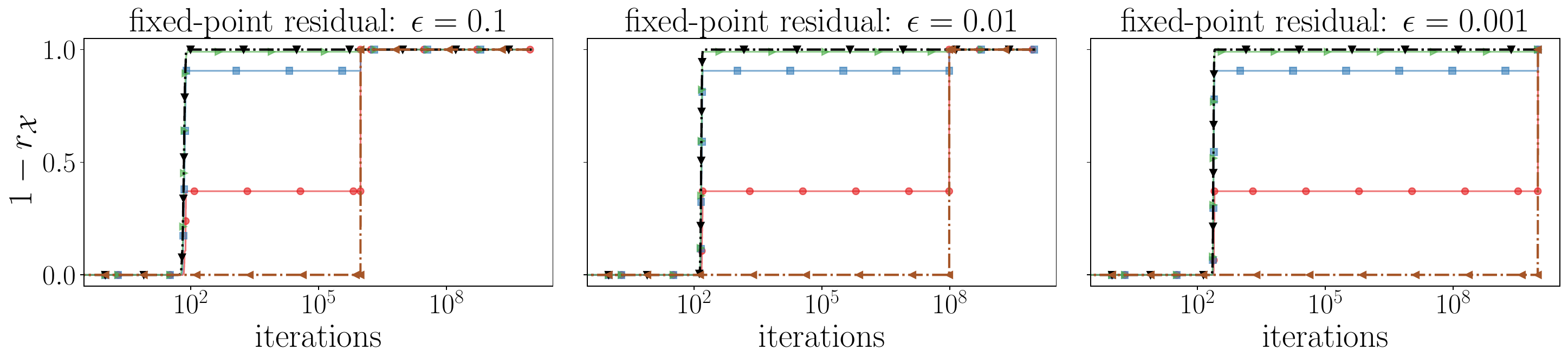}
    \end{subfigure}\\
    \begin{subfigure}[t]{.99\linewidth}
      \centering
      \includegraphics[width=\linewidth]{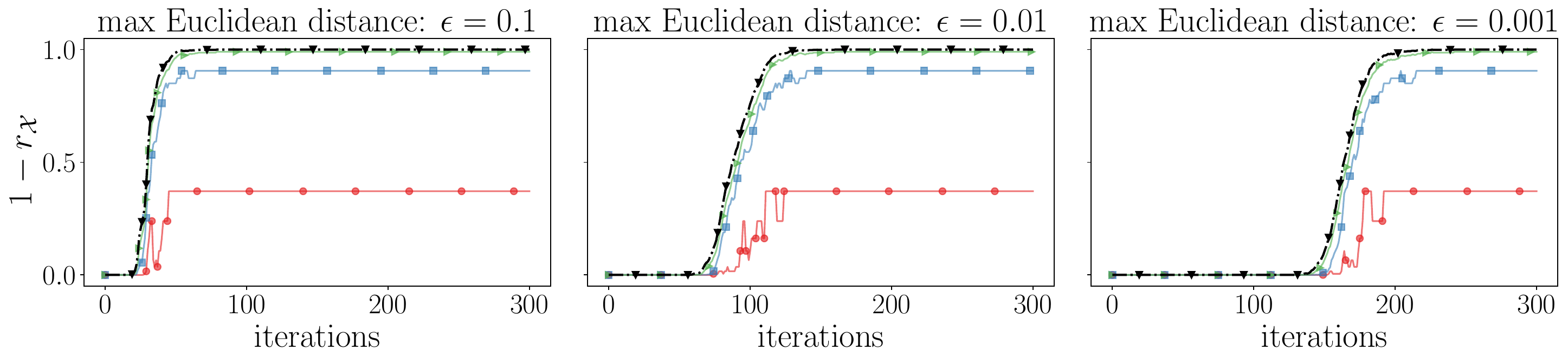}
    \end{subfigure}
  \centering
  \classicallegend
  \caption{
    Probabilistic lower bounds of the success rate for robust Kalman filtering.
    Top: fixed-point residual.
    Bottom: maximum Euclidean distance from \Eqn~\eqref{eq:max_Euclidean}.
    Note that the x-axes are different for the top and bottom rows.
    The bounds get tighter as the number of samples increases.
    }
    \label{fig:rkf_fp}
\end{figure}

\begin{figure}[!h]
    \begin{subfigure}[t]{.99\linewidth}
      \centering
      \includegraphics[width=\linewidth]{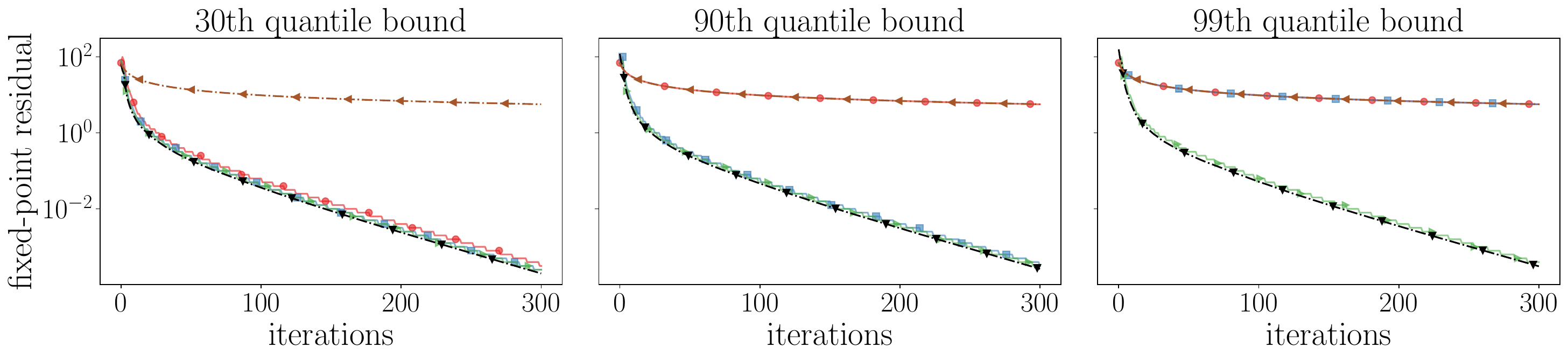}
    \end{subfigure}\\
  \begin{subfigure}[t]{.99\linewidth}
    \centering
    \includegraphics[width=\linewidth]{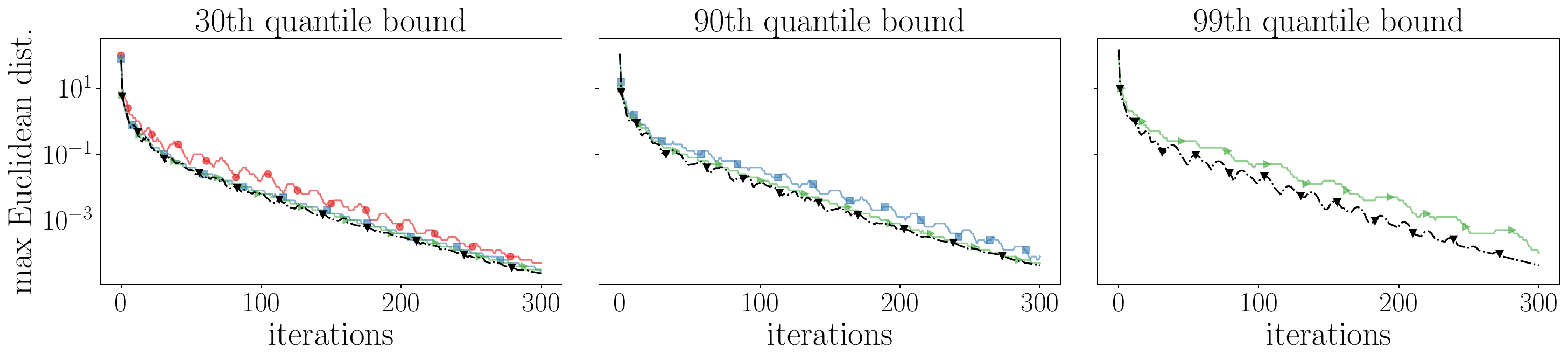}
  \end{subfigure}
  \centering
  \classicallegendquantile
  \caption{
    Probabilistic guarantees for SCS to solve the robust Kalman filtering problem.
    Top: fixed-point residual.
    Bottom: maximum Euclidean distance from \Eqn~\eqref{eq:max_Euclidean}.
    Our bounds resemble linear convergence, while the worst-case guarantee gives sublinear convergence.
  }
  \label{fig:rkf_custom}
\end{figure}

\begin{figure}[!h]
  \begin{subfigure}[t]{.33\linewidth}
    \centering
    \includegraphics[width=\linewidth]{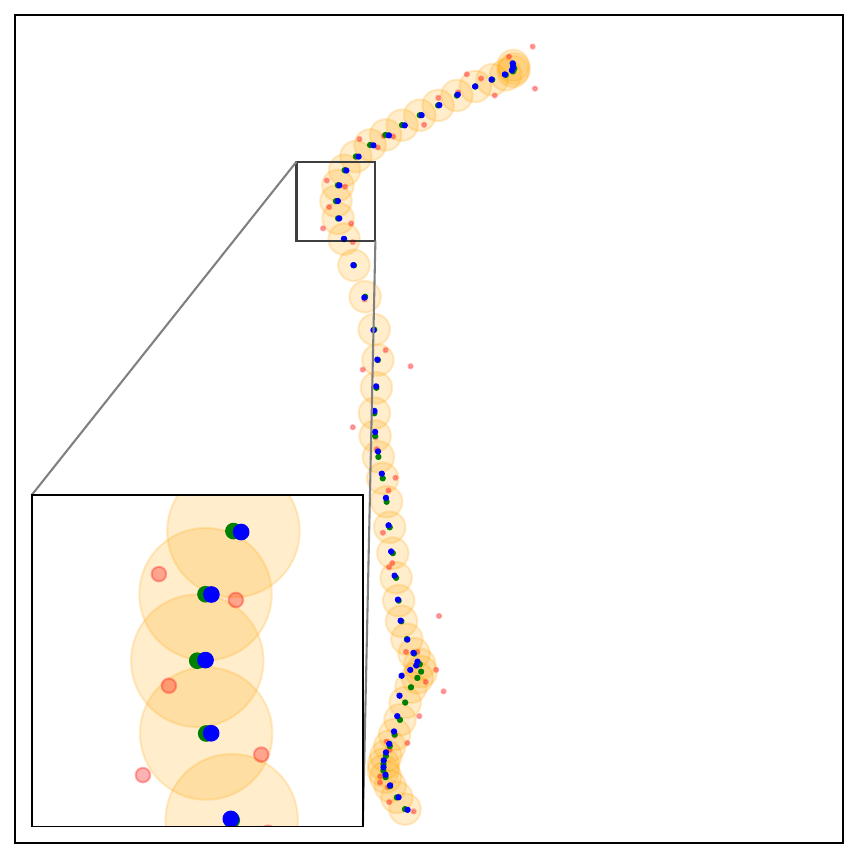}
  \end{subfigure}%
  \begin{subfigure}[t]{.33\linewidth}
    \centering
    \includegraphics[width=\linewidth]{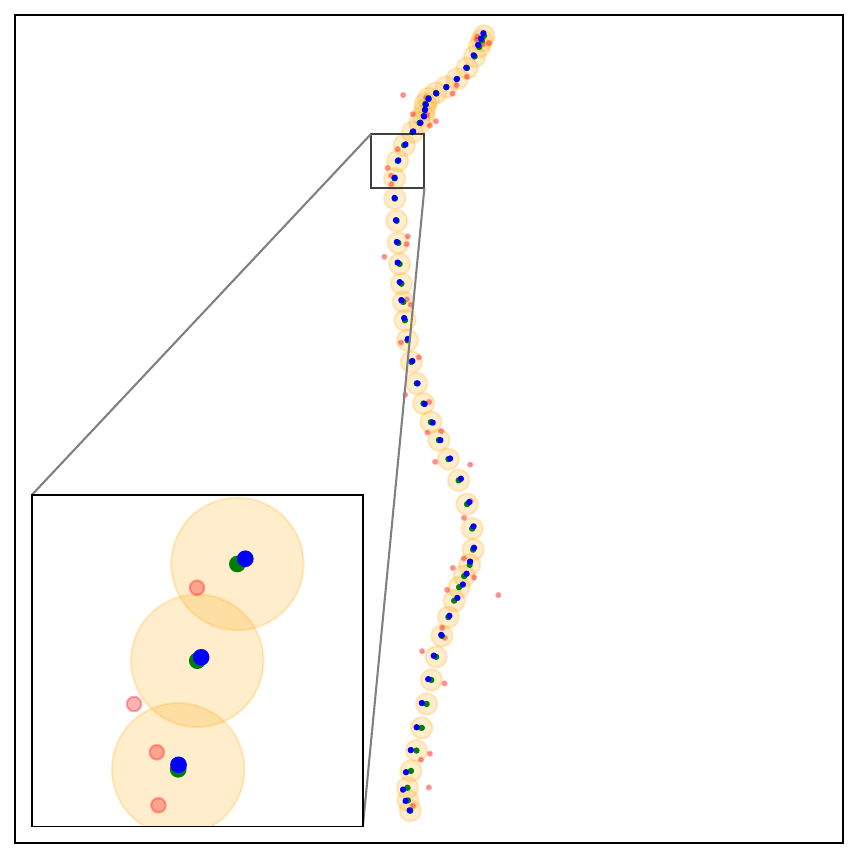}
  \end{subfigure}%
  \begin{subfigure}[t]{.33\linewidth}
    \centering
    \includegraphics[width=\linewidth]{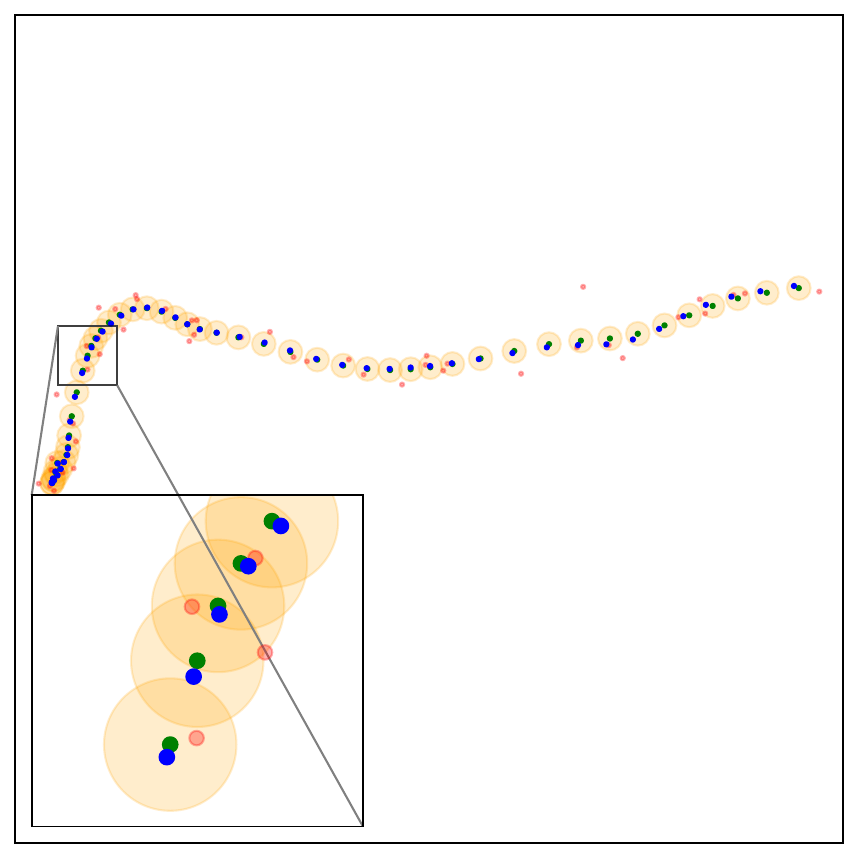}
  \end{subfigure}%
  \centering
  \rkfvisualslegend
  \caption{Visualizing the guarantees for robust Kalman filtering with the maximum Euclidean distance metric~\eqref{eq:max_Euclidean}.
    Each plot is a separate parametric problem.
    The noisy, observed trajectory is made up of the pink points which parametrize the problem.
    The robust Kalman filtering recovery, the optimal solution of problem~\eqref{prob:rkf}, is shown as green points.
    The shaded beige regions are centered at the optimal points with radius $0.1$.
    With high probability, all of the extracted states after $30$ steps will fall within their respective beige regions across $86 \%$ of problem instances.
  }
  \label{fig:rkf_visuals}
\end{figure}

\subsection{Learned optimizers}\label{subsec:experiments_learned}
In this subsection we apply our method to obtain generalization guarantees for a variety of learned optimizers: LISTA~\citep{lista} and several of its variants~\citep{alista,glista} in \Sec~\ref{subsec:lista}, learning warm starts (L2WS) for fixed-point optimization problems~\citep{l2ws} in \Sec~\ref{subsec:l2ws}, and model-agnostic meta-learning (MAML)~\citep{maml} in \Sec~\ref{subsec:maml}.

We implement our learning algorithm for the different learned optimizers in JAX~\citep{jax}, using the JAXOPT library~\citep{jaxopt_implicit_diff} with the ADAM optimizer~\citep{adam}.
To do the implicit differentiation during training, we use a bisection method to solve the KL inverse problems.
For each of the learned optimizers, we describe how we partition the weights into groups as mentioned in \Sec~\ref{sec:gen_l2o}, and report the number of partitions $J$.
We also describe how the prior mean is set for each learned optimizer.
We use $50000$ training samples and evaluate on $1000$ test problems in each example.

After training, we calibrate our bounds, and report a lower bound on the success rate $1 - \exprisk(P)$ for the learned optimizer with posterior distribution $P = \mathcal{N}_{w^\star,s^\star}$.
As in the results for the classical optimizers, we report bounds across many algorithm steps and tolerances, construct quantile bounds at each step, and report the results for task-specific metrics where applicable.
The probabilistic results hold with probability at least $0.99988$ for the risk and with probability at least $0.99028$ for the quantiles; see Appendix \Sec~\ref{sec:prob_details} for details.


\subsubsection{LISTA variants for sparse coding problems}\label{subsec:lista}
In the sparse coding problem, the goal is to recover a sparse vector $z \in \reals^n$ given a dictionary $D \in \reals^{m \times n}$ from noisy linear measurements
\begin{equation*}
  b = Dz + \epsilon,
\end{equation*}
where $b \in \reals^m$ is the noisy measurement and $\epsilon \in \reals^m$ is additive Gaussian white noise.
A popular approach to solve this problem is to formulate it as the lasso problem
\begin{equation}\label{prob:lasso}
  \begin{array}{ll}
\mbox{minimize} & (1/2) \|Dz - b\|_2^2 + \rho \|z\|_1,
\end{array}
\end{equation}
where $\rho \in \reals_{++}$ is a hyperparameter, and then run the iterative shrinkage thresholding algorithm (ISTA) with algorithm steps
\begin{equation*}
  z^{k+1} = \eta_{\rho / L} \left(z^k - \frac{1}{L}D^T(Dz^k - b)\right).
\end{equation*}
Here $\eta_\psi$ is the soft-thresholding function $\eta_\psi(z) = \textbf{sign}(z)\max(0, |z| - \psi)$
and $L \in \reals_{++}$ is less than or equal to the largest eigenvalue of $D^T D$.
Seeking faster convergence, learned ISTA (LISTA)~\citep{lista} and its variants learn some of the components of the update function.
All of these learned optimizers seek a good set of weights $\theta$ to solve problem~\eqref{prob:simplified_l2o} where the initial iterate is set to zero, \ie, $h_\theta(x) = 0$.
In this subsection, we apply our method to LISTA and several of its variants enumerated below, and compare the performance against classical algorithms: ISTA and its accelerated version, Fast ISTA (FISTA)~\citep{fista}.

\paragraph{LISTA.}
The LISTA updates from the seminal work of~\citet{lista} are
\begin{equation}\label{eq:lista}
  z^{k+1} = \eta_{\psi^k} \left(W^k_1 z^k + W^k_2 b \right),
\end{equation}
where the learned parameters are $\theta=(\{\psi^k, W_1^k, W_2^k\}_{k=0}^{K-1}) \in \reals^{K(1 + mn + n^2)}$.
We partition the weights into $J=3$ groups: the shrinkage thresholds $\{\psi^k\}_{k=0}^{K-1}$, the first set of weight matrices $\{W_1^k\}_{k=0}^{K-1}$, and the second set of weight matrices $\{W_2^k\}_{k=0}^{K-1}$.
We set the prior means for the weights to the values of ISTA with $\rho = 0.1$.

\paragraph{TiLISTA.}
TiLISTA~\citep{alista}, a variant of LISTA, couples the two weight matrices and ties the matrix updates over the iterates so that they only differ by a learned scalar factor.
The TiLISTA updates are given by
\begin{equation*}
  z^{k+1} = \eta_{\psi^k} \left( z^k - \gamma^k \tilde{W}^T (Dz^k - b)\right),
\end{equation*}
where the weights are $\theta=(\tilde{W}, \{\psi^k, \gamma^k\}_{k=0}^{K-1}) \in \reals^{2K + mn}$.
We partition the weights into $J=3$ groups:  the shrinkage thresholds $\{\psi^k\}_{k=0}^{K-1}$, the step sizes $\{\gamma^k\}_{k=0}^{K-1}$, and the matrix $\tilde{W}$.
We set the prior mean for $\tilde{W}$ to be the pre-computed value given by solving problem~\eqref{prob:alista_stage1} and zero for the other groups.

\paragraph{ALISTA.}
\citet{alista} also propose ALISTA, which significantly reduces the number of learned algorithm parameters by determining the matrix $\tilde{W}$ from TiLISTA in a data-free manner.
The ALISTA updates are given by
\begin{equation}\label{eq:alista_datafree}
  z^{k+1} = \eta_{\psi^k} ( z^k - \gamma^k \tilde{W}^T (Dz^k - b)),
\end{equation}
where $\tilde{W} \in \reals^{m \times n}$ is pre-computed in a data-free manner by solving the convex QP
\begin{equation}\label{prob:alista_stage1}
  \begin{array}{ll}
  \mbox{minimize} & \|W^T D\|_F^2 \\
  \mbox{subject to} &W_{:,i}^T D_{:,i} = 1 \quad i=1, \dots, m.
\end{array}
\end{equation}
Then the parameters $\theta=(\{\psi^k, \gamma^k\}_{k=0}^{K-1}) \in \reals^{2K}$ are learned in an end-to-end fashion.
We partition the weights into $J=2$ groups: the shrinkage thresholds $\{\psi^k\}_{k=0}^{K-1}$ and the step sizes $\{\gamma^k\}_{k=0}^{K-1}$.
Since the matrix $\tilde{W}$ is pre-computed for ALISTA in a data-free manner, we do not train over this variable.
We set all of the prior means for the weights to zero.

\paragraph{GLISTA.}
In GLISTA~\citep{glista}, we incorporate gain gates and overshoot gates to the ALISTA model.
The GLISTA updates are given by
\begin{align*}
  \tilde{z}^{k+1} &= \eta_{\psi^k} \left(1 + \mu^k \psi^{k-1} \exp(\nu_k |z^k|) - \gamma^k \tilde{W}^T \left(D (1 + \mu^k \psi^{k-1} \exp(\nu_j |z^k|)) - b\right)\right)\\
  z^{k+1} &= \left(1 + \frac{a^k}{|\tilde{z}^{k+1} - z^k| + \epsilon}\right) \odot \tilde{z}^{k+1} - \left(\frac{a^k}{|\tilde{z}^{k+1} - z^k| + \epsilon}\right) \odot z^k,
\end{align*}
where $\epsilon \in \reals_{++}$ is a small positive value.
The weight matrix $\tilde{W}$ is pre-computed in a data-free manner as in ALISTA.
We partition the weights into $J=5$ groups: the shrinkage thresholds $\{\psi^k\}_{k=0}^{K-1}$, the step sizes $\{\gamma^k\}_{k=0}^{K-1}$, the two sets of gated parameters $\{\mu^k\}_{k=0}^{K-1}$ and $\{\nu^k\}_{k=0}^{K-1}$, and the overshoot parameters $\{a^k\}_{k=0}^{K-1}$.
Thus the learned parameters are $\theta=(\{\psi^k, \gamma^k, \mu^k, \nu^k, a^k\}_{k=0}^{K-1}) \in \reals^{5K}$.
We set all of the prior means for the weights to zero.

\paragraph{Task-specific metric.}
In this example, we only report normalized mean squared error in decibel (dB) units from \Eqn~\eqref{eq:nmse} as this is common in the literature for sparse coding~\citep{l2o}.

\paragraph{Numerical example.}
We follow the setup from~\citet{l2o} for this example.
We sample a dictionary $D \in \reals^{m \times n}$ with i.i.d. entries from the distribution $\mathcal{N}(0,1/m)$.
Then we normalize $D$ so that each column has Euclidean norm of one.
To generate each sample, we generate the ground truth from the distribution $\mathcal{N}(0,1)$ and zero out each entry with a probability of $0.9$.
The noise $\epsilon$ is set to a signal to noise ratio of $40$dB.
Then the measurement is $b = D z + \epsilon$.
We take a matrix with dimensions $m=256$, $n=512$, and pick the number of algorithm steps to be $K=10$.
We compare against ISTA and FISTA, setting $\rho = 0.1$, a value picked in~\citet{lista_cpss}.
We calibrate with $20000$ Monte Carlo samples of the weights.

\paragraph{Results.}
Figure~\ref{fig:alista_results1} along with Table~\ref{tab:sparse_coding} show the behavior of our method.
The classical optimizers ISTA and FISTA hardly make progress within $10$ iterations as is commonly observed in the literature~\citep{lista_cpss}.
Our method with all of the learned optimizers except for LISTA provides generalization guarantees that are much stronger than the baseline performance.
Moreover, the guarantees are close to the empirical results showing that our bounds are tight.
Our method with LISTA performs poorly because there are a very large number of weights, which in turn makes the regularizer $B(w,s,\lambda)$ significantly larger and more difficult to optimize.

\begin{table}[!h]
  \centering
  \footnotesize
    \renewcommand*{\arraystretch}{1.0}
  \caption{\reviewChanges{The quantile results for sparse coding on the NMSE after $10$ iterations.
  We report the empirical average of test problems (Emp.) and bounds (Bnd.) for each of the learned optimizers.}
  }
  \label{tab:sparse_coding}
  \vspace*{-3mm}
  \begin{tabular}{l}
  \end{tabular}
  \reviewChanges{
  \adjustbox{max width=.85\textwidth}{
    \begin{tabular}{ccccccccccc}
      \midrule
    Quantile&
    ISTA&
    FISTA&
    \multicolumn{2}{c}{LISTA}&
    \multicolumn{2}{c}{TiLISTA}&
    \multicolumn{2}{c}{ALISTA}&
    \multicolumn{2}{c}{GLISTA}
    \\
    {} & Emp. & Emp. & Emp. & Bnd. & Emp. & Bnd. & Emp. & Bnd. & Emp. & Bnd.\\
    \midrule
    \csvreader[head to column names, late after line=\\]{./data/sparse_coding.csv}{
    quantiles=\colA,
    ista=\colB,
    fista=\colK,
    tilista_emp=\colC,
    tilista_bound=\colD,
    alista_emp=\colE,
    alista_bound=\colF,
    glista_emp=\colG,
    glista_bound=\colH,
    lista_emp=\colI,
    lista_bound=\colJ,
    }{\colA & \colB & \colK & \colI &\colJ&\colC &\colD &\colE &\colF &\colG &\colH}
    \bottomrule
  \end{tabular}}}
\end{table}

\begin{figure}[!h]
  \begin{subfigure}[t]{1.0\linewidth}
    \centering
    \includegraphics[width=\linewidth]{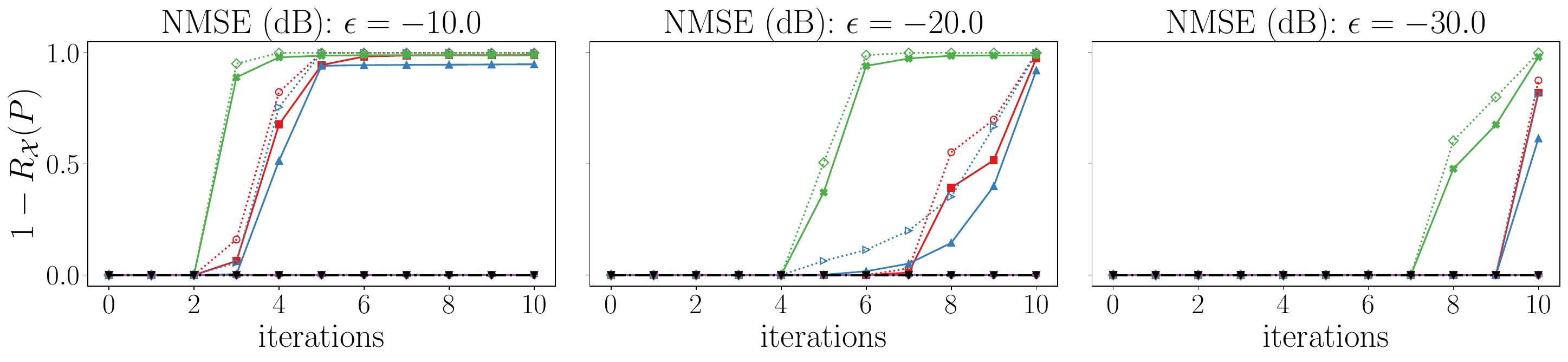}
  \end{subfigure}\\%
  \begin{subfigure}[t]{1.0\linewidth}
    \centering
    \includegraphics[width=\linewidth]{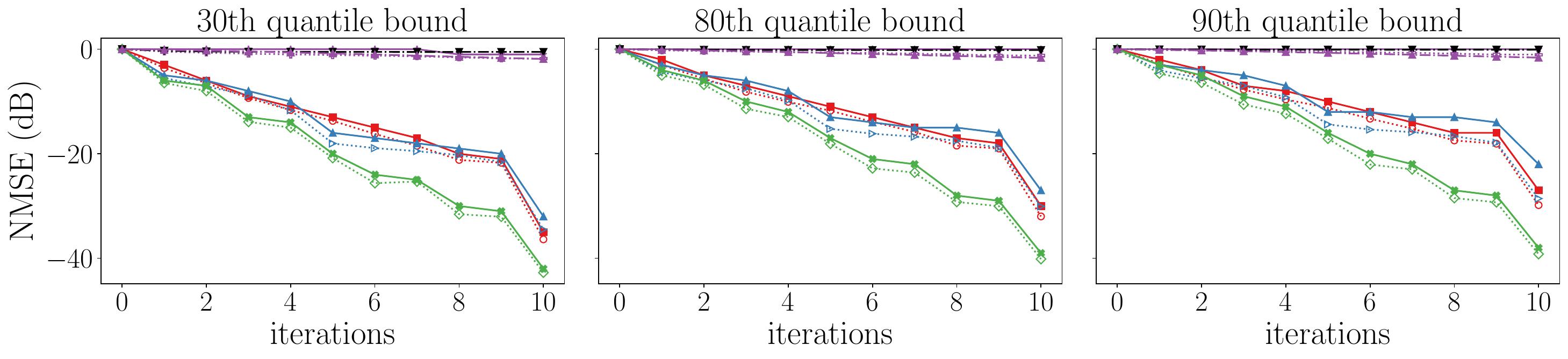}
  \end{subfigure}%
  \centering
  \sparsecodinglegend
  \caption{\reviewChanges{Sparse coding results.
  Top: lower bound on the success rate.
  Bottom: upper bound on the quantile.
    The PAC-Bayes guarantees for ALISTA, TiLISTA, and GLISTA significantly outperform the empirical results given by ISTA.
    The guarantees are close to the corresponding empirical values for the learned optimizers.}
    }
  \label{fig:alista_results1}
\end{figure}




\subsubsection{Learning to warm starts for fixed-point problems}\label{subsec:l2ws}
In our second example of learned optimizers, we consider the L2WS framework~\citep{l2ws} which seeks to learn a high-quality initialization to solve the fixed-point problem~\eqref{prob:l2ws}.
The training problem is problem~\eqref{prob:simplified_l2o} where the initialization $h_\theta(x)$ is learned rather than the algorithm steps (\ie, $T_\theta(z,x) = T(z,x)$).
The objective is the fixed-point residual $f(z,x) = \|z - T(z,x)\|_2$.
Here, $h_\theta : \reals^d \rightarrow \reals^n$ is a neural network with ReLU activation functions, and the warm start is computed as
\begin{equation*}
  h_\theta(x) = W_{L-1} \psi(W_{L-2} \psi(\dots \psi(W_0 x + b_0)) + b_{L-2}) + b_{L-1},
\end{equation*}
where $\psi(z) = \max(0,z)$ element-wise, the matrix in the $i$-th layer is $W_i \in \reals^{m_i \times n_i}$, and the bias term in the $i$-th layer is $b_i \in \reals^{n_i}$.
The learnable weights consist of all of the weight and bias terms in the neural network, \ie, $\theta = (W_0, b_0, \dots, W_{L-1}, b_{L-1})$.
For this approach, $\ell_\theta : \reals^n \rightarrow \reals_+$ can take either the form of the regression loss from~\Eqn~\eqref{eq:reg_loss}
or objective loss from~\Eqn~\eqref{eq:obj_loss}.
We partition the weights into $J=2L$ groups corresponding to each bias and weight term in each layer.
We set the prior means to be zero for all of the weights.

\paragraph{Strengthening the bounds.}
As in the case of classical optimizers, one downside of our approach is that the bound on the expected risk cannot reach exactly zero (even for a very large number of iterations) due to a non-zero regularization term.
For the L2WS framework specifically, we bypass this problem and show how the expected risk can be bounded to zero with high probability.
We first bound the distance from the warm start to optimality $\dist_{\fix T_x}(h_\theta(x))$ with the following theorem.
\begin{theorem}\label{thm:l2ws}
  Let $w = (W_0, b_0, \dots, W_{L-1}, b_{L-1})$ and $s = (\Sigma_0, \sigma_0, \dots, \Sigma_{L-1}, \sigma_{L-1})$ be the mean and variance terms of the weights of an $L$-layer stochastic neural network.
  Let $\bar{x}$ and $\bar{z}$ be upper bounds on $\|x\|_2$ and $\|z^\star(x)\|_2$ for any $x$ drawn from the distribution $\mathcal{X}$.
  Let $a^\star_0 = \bar{x}$, 
  \begin{equation*}
    a^\star_{i+1} = \left(\|W_{i}\|_2 + \|b_{i}\|_2 + v_i \sqrt{2 (m_{i} + n_{i} + 1) \log((L - \delta) / (2Lh))}\right) (a^\star_{i}+1), \quad \tilde{\Sigma}_i = \begin{bmatrix}
      \Sigma_i \\
      \sigma_i^T
    \end{bmatrix},
  \end{equation*}
  for $i=0,\dots,L-1$, where $v_i^2 = \max \{\max_j\|(\tilde{\Sigma}_i)^{1/2}_{j:}\|^2_2, \max_k\|(\tilde{\Sigma}_i)^{1/2}_{:k}\|^2_2\}$.
  Then with probability at least $1 - \delta$ the following bound holds for any $x$ drawn from the distribution $\mathcal{X}$:
  \begin{equation*}
    \dist_{\fix T_x}(h_\theta(x)) \leq \bar{z} + a^\star_L.
  \end{equation*}
\end{theorem}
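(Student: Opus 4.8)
The plan is to reduce the claimed distance bound to a high-probability norm bound on the network output $h_\theta(x)$, and then to control that norm by a layer-by-layer induction in which the only substantial work is a spectral-norm bound on the per-layer Gaussian weight perturbation. First I would use that the optimal solution is a fixed point, $z^\star(x)\in\fix T_x$, together with $\|z^\star(x)\|_2\le\bar z$, so that directly from the definition of the distance function,
\[
  \dist_{\fix T_x}(h_\theta(x)) \le \|h_\theta(x)-z^\star(x)\|_2 \le \|h_\theta(x)\|_2 + \bar z .
\]
Hence it suffices to show that $\|h_\theta(x)\|_2\le a^\star_L$ holds with probability at least $1-\delta$, simultaneously for every $x$ with $\|x\|_2\le\bar x$.

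Next I would set up the layerwise recursion in augmented form. Writing the stochastic weights as $W_i+E_i$ and $b_i+e_i$, where $E_i,e_i$ are the zero-mean Gaussian perturbations whose diagonal combined variance profile is $\tilde\Sigma_i$, and letting $z_0=x$, the pre-activation of layer $i$ is $(W_i+E_i)z_i+(b_i+e_i)=[W_i\mid b_i]\tilde z_i+[E_i\mid e_i]\tilde z_i$ with $\tilde z_i=(z_i,1)$. Since the ReLU map $\psi$ is $1$-Lipschitz with $\psi(0)=0$, so that $\|\psi(v)\|_2\le\|v\|_2$ (the final layer is linear, for which this inequality holds trivially), applying the triangle inequality, submultiplicativity of the spectral norm, and the elementary bounds $\|[W_i\mid b_i]\|_2\le\|W_i\|_2+\|b_i\|_2$ and $\|\tilde z_i\|_2=\sqrt{\|z_i\|_2^2+1}\le\|z_i\|_2+1$, I would obtain the one-step estimate
\[
  \|z_{i+1}\|_2 \le \bigl(\|W_i\|_2+\|b_i\|_2+\|[E_i\mid e_i]\|_2\bigr)\,(\|z_i\|_2+1).
\]

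The crux is the high-probability spectral-norm bound $\|[E_i\mid e_i]\|_2\le v_i\sqrt{2(m_i+n_i+1)\log((L-\delta)/(2Lh))}$. Here I would argue that for fixed unit vectors $u$ and $w$ the scalar $w^\top[E_i\mid e_i]u$ is a zero-mean Gaussian whose variance $\sum_{j,k}(\tilde\Sigma_i)_{jk}w_j^2u_k^2$ is at most $v_i^2$: using $\|u\|_2=1$ one has $\sum_k(\tilde\Sigma_i)_{jk}u_k^2\le\|(\tilde\Sigma_i^{1/2})_{j:}\|_2^2$ for each row $j$, and summing against $w_j^2$ with $\|w\|_2=1$ yields the bound, which is exactly why $v_i^2$ is defined as the larger of the maximal row and column squared norms of $\tilde\Sigma_i^{1/2}$. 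Combining a standard covering-net discretization of the two unit spheres---whose cardinality is exponential in $m_i+n_i+1$---with the Gaussian tail bound and a union bound over the net then produces the $v_i\sqrt{2(m_i+n_i+1)\log(\cdot)}$ scale, where the logarithmic term encodes the per-event failure probability.

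Finally I would take a union bound so that the per-layer spectral-norm events hold simultaneously across all $L$ layers (the $L$ and $h$ inside the logarithm come from splitting the total budget $\delta$ across the layers and the $h$ sampled weight vectors used in calibration), and then induct: the base case is $\|z_0\|_2=\|x\|_2\le\bar x=a^\star_0$, and the one-step estimate together with the spectral-norm bound turns the hypothesis $\|z_i\|_2\le a^\star_i$ into $\|z_{i+1}\|_2\le a^\star_{i+1}$, matching the stated recursion. Evaluating at $i=L-1$ gives $\|h_\theta(x)\|_2=\|z_L\|_2\le a^\star_L$, which with the first display proves the theorem. I expect the main obstacle to be the spectral-norm concentration step: controlling the heterogeneous diagonal covariance profile so that $v_i$ emerges as the correct scale and the dimension dependence is $\sqrt{m_i+n_i+1}$ with clean constants. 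A helpful feature is that uniformity over $x$ is automatic, since bounding the \emph{operator} norms of the perturbations---rather than their action on the data-dependent vector $\tilde z_i$---decouples the layers and removes any dependence on $x$ beyond the a priori bounds $\bar x$ and $\bar z$.
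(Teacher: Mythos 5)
Your proposal follows the same skeleton as the paper's proof: reduce the distance bound to a norm bound via $\dist_{\fix T_x}(h_\theta(x)) \le \|h_\theta(x)-z^\star(x)\|_2 \le \|h_\theta(x)\|_2+\bar z$ (a step the paper leaves implicit), establish the augmented per-layer recursion $\|y_{i+1}(x)\|_2 \le (\|W_i\|_2+\|b_i\|_2+\|\tilde U_i\|_2)(\|y_i(x)\|_2+1)$ using $1$-Lipschitzness of the ReLU and Cauchy--Schwarz, bound the spectral norm of each Gaussian perturbation matrix with per-layer failure probability $\delta/L$, union bound over the $L$ layers, and induct; all of that matches the paper line by line. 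The genuine difference is the concentration step. The paper imports the Gaussian matrix bound of Tropp (Theorem~\ref{thm:tropp}), $\mathbf{P}(\|\Gamma\odot B\|_2 \ge t) \le (d_1+d_2)e^{-t^2/2v^2}$, which upon inversion places the dimension \emph{inside} the logarithm: $\tau_i = v_i\sqrt{2\log(L(m_i+n_i+1)/\delta)}$. You instead prove concentration from scratch with a two-sphere covering-net argument; your variance calculation $\sum_{j,k}w_j^2(\tilde\Sigma_i)_{jk}u_k^2 \le \max_j\|(\tilde\Sigma_i^{1/2})_{j:}\|_2^2 \le v_i^2$ is correct (and in fact only needs the row half of the paper's $v_i^2$), and the net argument is valid, but it produces a bound of the form $2 v_i\sqrt{2\left((m_i+n_i+1)\log 9+\log(2L/\delta)\right)}$, \ie, the dimension multiplies a constant under the square root rather than appearing inside the logarithm, plus an approximation factor from the net. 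So your route is more elementary and self-contained, while the paper's is quantitatively tighter; either way the recursion and induction go through, and you prove a theorem of the same form with a looser definition of $a^\star_{i+1}$. Two caveats: first, your constants do not reproduce the stated recursion exactly, though the formula printed in the theorem statement (with the undefined $h$ and the expression $(L-\delta)/(2Lh)$) is itself inconsistent with the paper's own proof, which derives $\tau_i$ as above, so this mismatch reflects a typo in the statement more than a defect in your argument; second, your suggestion that the $h$ inside the logarithm accounts for calibration-phase weight samples has no basis---the proof involves no calibration quantities at all.
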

See Appendix \ref{proof:l2wsproof} for the proof.
This upper bound on the distance from the warm start to an optimal solution given by $\bar{z} + a^\star_L$ can be easily input into inequalities~\eqref{eq:lin_conv_rate} and~\eqref{eq:avg_rate} to bound the fixed-point residual for a given number of iterations.
To see this, recall that inequalities~\eqref{eq:lin_conv_rate} and~\eqref{eq:avg_rate} include a term $\|z^0(x) - z^\star(x)\|_2$ and that the bounds hold for \emph{any} $z^\star(x) \in \fix T_x$.
Therefore, replacing these terms with $\dist_{\fix T_x}(h_\theta(x))$ yields valid inequalities on the fixed-point residual.

\paragraph{Unconstrained quadratic optimization.}
The learned warm starts example we consider is an unconstrained quadratic optimization problem
\begin{equation*}
  \begin{array}{ll}
  \label{prob:gd_example}
  \mbox{minimize} & (1 / 2)z^T P z + c^T z,
  \end{array}
\end{equation*}
where $P \in \symm^n_{++}$, and $c \in \reals^n$ are the problem data and $z \in \reals^n$ is the decision variable.
The parameter is $x = c$ and the fixed-point algorithm is gradient descent.

\paragraph{Numerical example.}
We take the first example, from~\citet{l2ws} where $n=20$, and the neural network has a single hidden layer with $10$ neurons.
Let $P \in \symm_{++}^{n}$ be a diagonal matrix where the first $10$ diagonals take the value $100$ and the last ten take the value of $1$.
Let $x = c \in \reals^n$.
Here, the $i$-th index of $x$ is sampled according to the uniform distribution $\mu_i \mathcal{U}[-10,10]$, where $\mu_i = 10000 \text{ if } i \leq 10 \text{ else } 1$.
We pick $K=15$ steps for training and use the fixed-point residual loss.
We calibrate with $1000$ Monte Carlo samples of the weights.

\paragraph{Results.}
Figures~\ref{fig:l2ws_qp1} along with Table~\ref{tab:l2ws_qp} show the behavior of our method.
The PAC-Bayes guarantees outperform both the cold start and the nearest neighbor.

\begin{table}[!h]
  \centering
  \footnotesize
    \renewcommand*{\arraystretch}{1.0}
  \caption{\reviewChanges{The quantile results for L2WS on unconstrained QP results on the number of iterations required to reach a given tolerance.
    For different quantiles and tolerances (Tol.), we compare the cold start and nearest neighbor empirical performances against our learned warm starts for which we report the empirical (Emp.) quantile and the bound (Bnd.).}
  }
  \label{tab:l2ws_qp}
  \vspace*{-3mm}
  \begin{tabular}{c}
  \end{tabular}
  \reviewChanges{
  \adjustbox{max width=.52\textwidth}{
    \begin{tabular}{cccccc}
      \midrule
    Quantile&
    Tol.&
    \begin{tabular}{@{}c@{}}Cold \\ Start\end{tabular}&
    \begin{tabular}{@{}c@{}}Nearest \\ Neighbor\end{tabular}
    &\begin{tabular}{@{}c@{}}L2WS \\ Emp.\end{tabular}
    &\begin{tabular}{@{}c@{}}L2WS \\ Bnd.\end{tabular}\\
    \midrule
    \csvreader[head to column names, late after line=\\]{./data/unconstrained_qp.csv}{
    quantile=\colQ,
    tol=\colA,
    cold_start=\colB,
    nearest_neighbor=\colC,
    l2ws_emp=\colD,
    l2ws_bound=\colE,
    }{\colQ & \colA & \colB & \colC &\colD &\colE}
    \bottomrule
  \end{tabular}}}
\end{table}

\begin{figure}[!h]
  \centering
    \includegraphics[width=\linewidth]{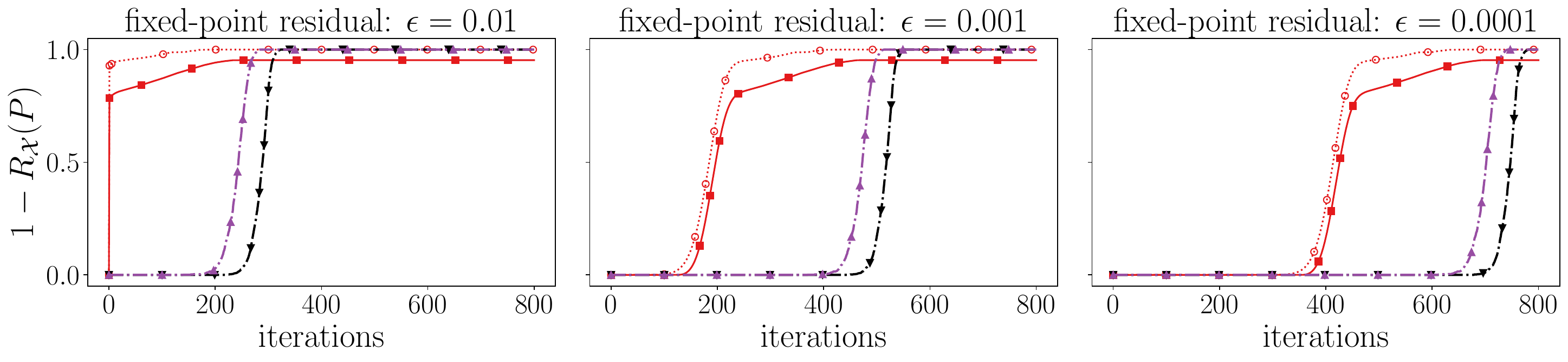}
    \includegraphics[width=\linewidth]{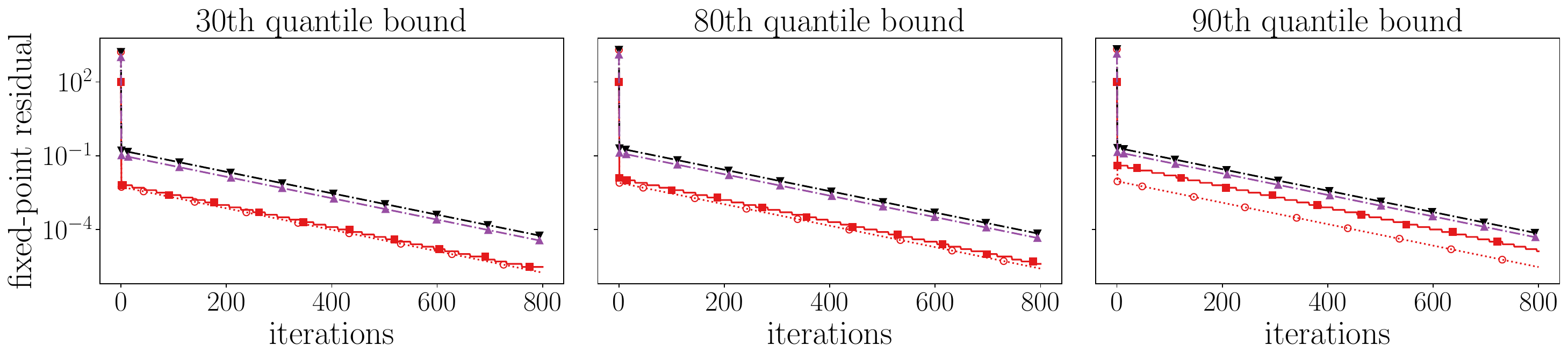}
    \lwslegend
    \caption{\reviewChanges{L2WS unconstrained QP fixed-point residual results.
    Top: lower bounds on the success rate.
    Bottom: upper bounds on the quantiles.
      The PAC-Bayes bound is very close to the empirical curve and outperforms both the cold start and the nearest neighbor curves.}
    }
    \label{fig:l2ws_qp1}
\end{figure}

\subsubsection{Model-agnostic meta-learning}\label{subsec:maml}
In this subsection, we apply our method to obtain generalization guarantees for the MAML framework~\citep{maml}, which aims to learn a model that quickly generalize to new tasks from minimal training examples.
Each task $\mathcal{T}$ is associated with a dataset $\mathcal{D}$, split into two disjoint sets: the training set $\mathcal{D}^{\rm train}$ and the test set $\mathcal{D}^{\rm test}$.
The dataset $\mathcal{D}^{\rm train}$ consists of $K^{\rm train}$ input-output pairs $\{a_i, y_i\}_{i=1}^{K^{\rm train}}$.
Similarly, $\mathcal{D}^{\rm test}$ consists of $K^{\rm test}$ input-output pairs.
At its heart, MAML seeks to optimize a model's initial parameters $\theta$, so it can quickly adapt to unseen tasks.
MAML fits into the learning to optimize framework from \Sec~\ref{subsec:mechanics_l2o} where the parameter is the training set, \ie, $x = \mathcal{D}^{\rm train}$.
The pre-defined (\ie, not learned) update function is a step in the direction of the negative of the gradient of the loss over the training set $\mathcal{D}^{\rm train}$, \ie, 
\begin{equation*}
  z^{k+1}(x) = z^k(x)-\gamma\nabla_z \mathcal{L}(z^k(x), x).
\end{equation*}
Here, $\gamma$ is a pre-determined positive number indicating the step size.
MAML learns the initial parameters $h_\theta(x) = \theta$, which is shared across tasks.
The loss for the learned optimizer is computed on the test set and is calculated as
\begin{equation*}
  \ell_\theta(x) = \mathcal{L}(\hat{z}_\theta(x), \mathcal{D}^{\rm test}).
\end{equation*}
We consider regression tasks where $\mathcal{L}$ gives the mean squared error (MSE) in a dataset $\mathcal{D}$:
\begin{equation*}
\mathcal{L}(z, \mathcal{D}) = \frac{1}{|\mathcal{D}|} \sum_{i=1}^{|\mathcal{D}|} (g_z(a_i) - y_i)_2^2.
\end{equation*}
Here, $g_z$ is the neural network predictor with weights $z$.
We partition the weights into $2L$ groups as in \Sec~\ref{subsec:l2ws}.
We set the prior means to be zero for all of the weights.

\paragraph{Sinusoid curves.}
We consider the meta-learning task of regressing inputs to outputs of sine waves using a few datapoints as in~\citet{maml}.
We generate each task by first sampling an amplitude $A$ and a phase $b$.
We then generate the datasets $\mathcal{D}^{\rm train}$ and $\mathcal{D}^{\rm test}$ in the following manner.
The inputs $a$ are uniformly sampled from an interval, and the corresponding outputs are given by $y = A \sin(a - b)$.
The neural network consists of two hidden layers of size $40$ each with ReLU activations.

\paragraph{Task-specific metric.}
To help visualize our results, we consider the task-specific metric that is the $\ell_{\infty}$ norm of the errors over the dataset $\mathcal{D}^{\rm test} = \{(a_i, y_i)\}_{i=1}^{K^{\rm test}}$:
\begin{equation}\label{eq:inf_norm}
  \max_{i=1, \dots, K^{\rm test}} |g_z(a_i) - y_i|.
\end{equation}

\paragraph{Numerical example.}
We follow the exact setup from~\citet{maml}.
For each task $\mathcal{T}$, we sample an amplitude $A$ from the uniform distribution $\mathcal{U}[0.1, 5.0]$ and a phase from the uniform distribution $\mathcal{U}[0, \pi]$.
All of the $a$ datapoints are sampled i.i.d. from the uniformly from $[-5.0, 5.0]$.
We pick the number of datapoints in the training and test sets to be $K^{\rm train} = 5$ and $K^{\rm test} = 100$ respectively.
The step size $\gamma$ is $0.01$, and we unroll $2$ steps during training.
We calibrate with $20000$ Monte Carlo samples of the weights.

\paragraph{Results.}
Figures~\ref{fig:maml_results1} and~\ref{fig:maml_quantiles} along with Table~\ref{tab:maml} show the behavior of our method with two unrolled steps.
In this example, the baseline that we compare against is the pretrained model from~\citet{maml} which trains the network on the sinusoid curves without unrolling any algorithm steps. 
For both metrics, our bounds are much stronger than the pretrained model.
We visualize our results in Figure~\ref{fig:maml_visuals}.
We obtain probabilistic guarantees that the solution returned after $10$ steps initialized with MAML will fall within a band of width two centered around the true sine surve.
The deterministic pretrained model fails to completely fall within the band of error for many of the problems.

\begin{figure}[!h]
  \centering
    \includegraphics[width=\linewidth]{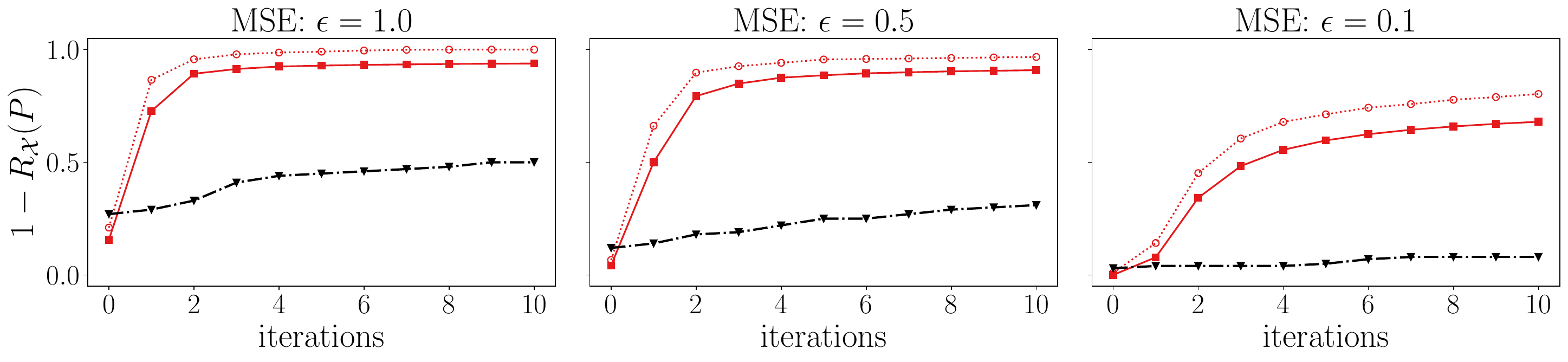}
    \includegraphics[width=\linewidth]{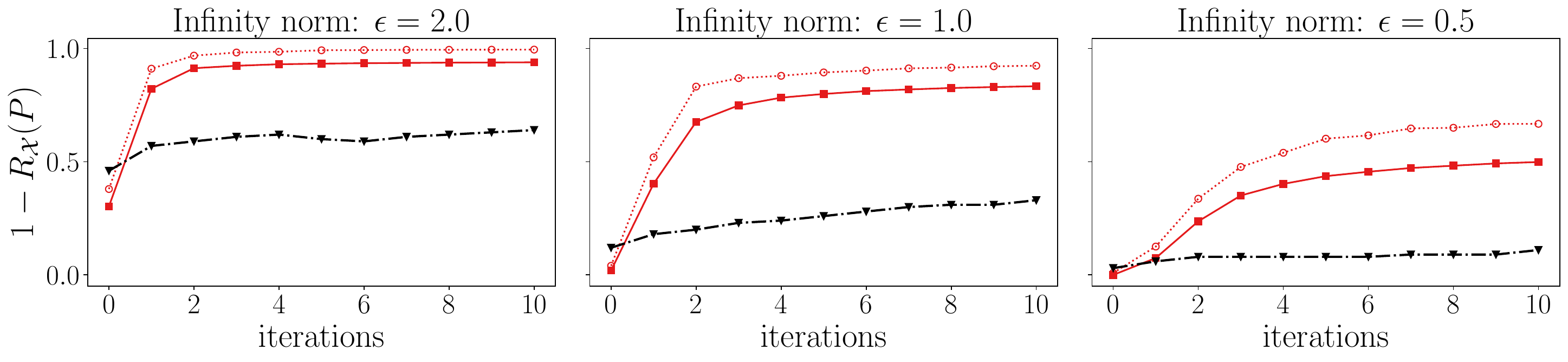}
  \centering
  \mamllegend
  \caption{\reviewChanges{MAML success rate results for sinusoid curves.
  Top: MSE.
  Bottom: infinity norm from \Eqn~\eqref{eq:inf_norm}.
    Our lower bounds on the success rate $1 - \exprisk(P)$ for both metrics are much higher than the empirical success rate of the pretrained model across many tolerances.}
  }
  \label{fig:maml_results1}
\end{figure}

\begin{figure}[!h]
  \centering
    \includegraphics[width=\linewidth]{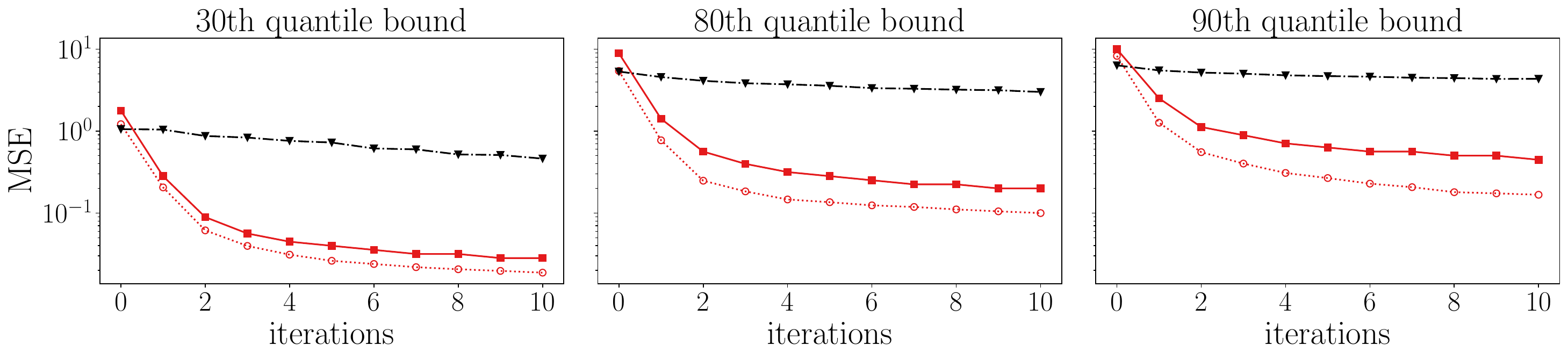}
    \includegraphics[width=\linewidth]{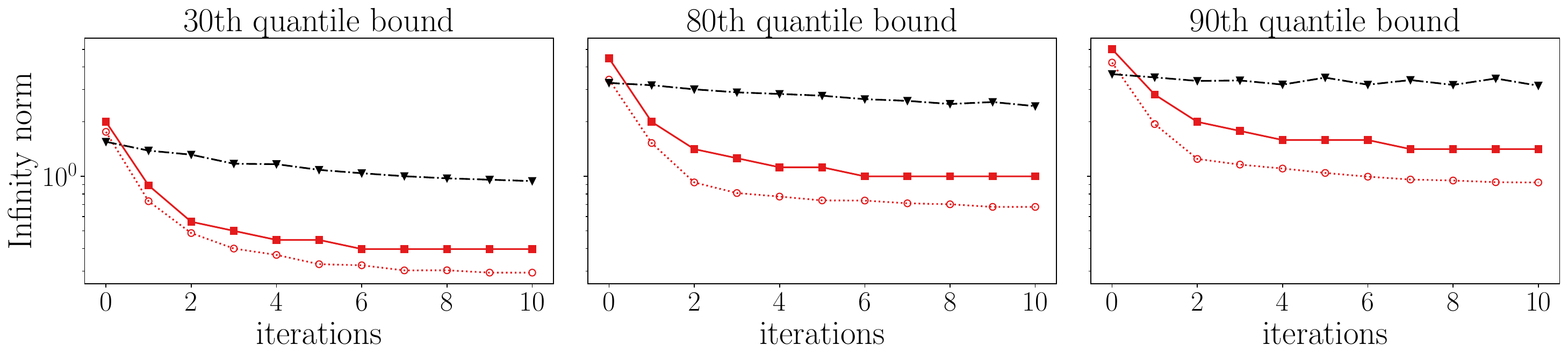}
  \centering
  \mamllegend
  \caption{\reviewChanges{
    MAML quantile results for sinusoid curves.
    Top: MSE.
    Bottom: infinity norm from \Eqn~\eqref{eq:inf_norm}.
  For the quantiles $30$, $80$, and $90$, our MAML upper bounds are significantly lower than the pretrained empirical curve after a few iterations.}
  }
  \label{fig:maml_quantiles}
\end{figure}

\begin{table}[!h]
  \centering
  \footnotesize
    \renewcommand*{\arraystretch}{1.0}
  \caption{\reviewChanges{The quantile results for MAML on sinusoidal regression tasks after $10$ iterations for both the mean square error (MSE) and the infinity norm.
  Since the expected risk is never bounded to a value below $0.05$, we cannot provide guarantees for the $95$th quantile.}
  }
  \label{tab:maml}
  \vspace*{-3mm}
  \begin{tabular}{l}
  \end{tabular}
  \reviewChanges{
  \adjustbox{max width=\textwidth}{
    \begin{tabular}{cccc}
      \midrule
      \multicolumn{4}{c}{MSE}\\
      \midrule
    Quantile&
    Pretrained&
    \multicolumn{2}{c}{MAML}
    \\
    {} & {} & Emp. & Bnd.\\
    \midrule
    \csvreader[head to column names, late after line=\\]{./data/maml.csv}{
    quantiles=\colA,
    pretrain=\colB,
    maml_emp=\colC,
    maml_bound=\colD,
    }{\colA & \colB & \colC &\colD}
    \bottomrule
  \end{tabular}}}
  \quad \quad 
  \reviewChanges{
  \adjustbox{max width=\textwidth}{
    \begin{tabular}{cccc}
      \midrule
      \multicolumn{4}{c}{Infinity norm}\\
      \midrule
    Quantile&
    Pretrained&
    \multicolumn{2}{c}{MAML}
    \\
    {} & {} & Emp. & Bnd.\\
    \midrule
    \csvreader[head to column names, late after line=\\]{./data/maml_inf.csv}{
    quantiles=\colA,
    pretrain=\colB,
    maml_emp=\colC,
    maml_bound=\colD,
    }{\colA & \colB & \colC &\colD}
    \bottomrule
  \end{tabular}}}
\end{table}

\begin{figure}[!h]
  \begin{subfigure}[t]{.33\linewidth}
    \centering
    \includegraphics[width=\linewidth]{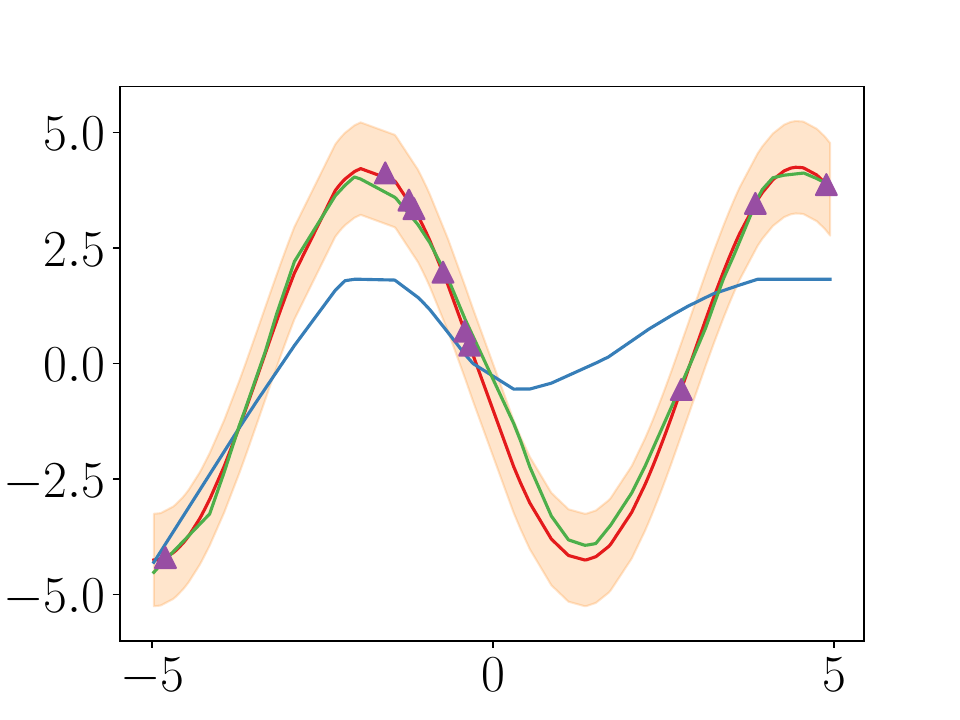}
  \end{subfigure}%
  \begin{subfigure}[t]{.33\linewidth}
    \centering
    \includegraphics[width=\linewidth]{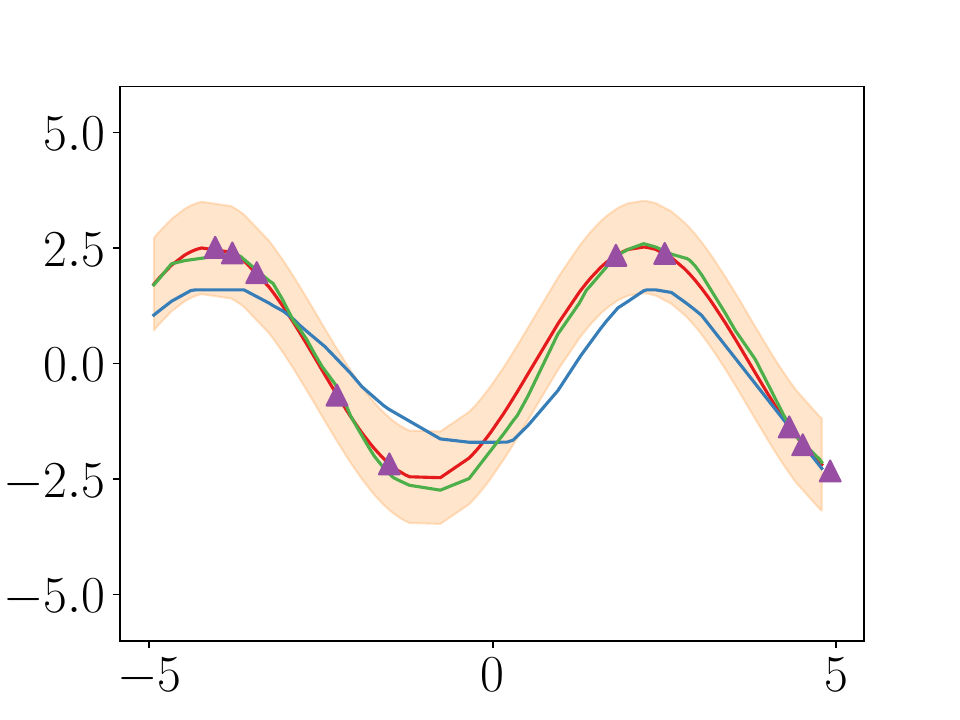}
  \end{subfigure}%
  \begin{subfigure}[t]{.33\linewidth}
    \centering
    \includegraphics[width=\linewidth]{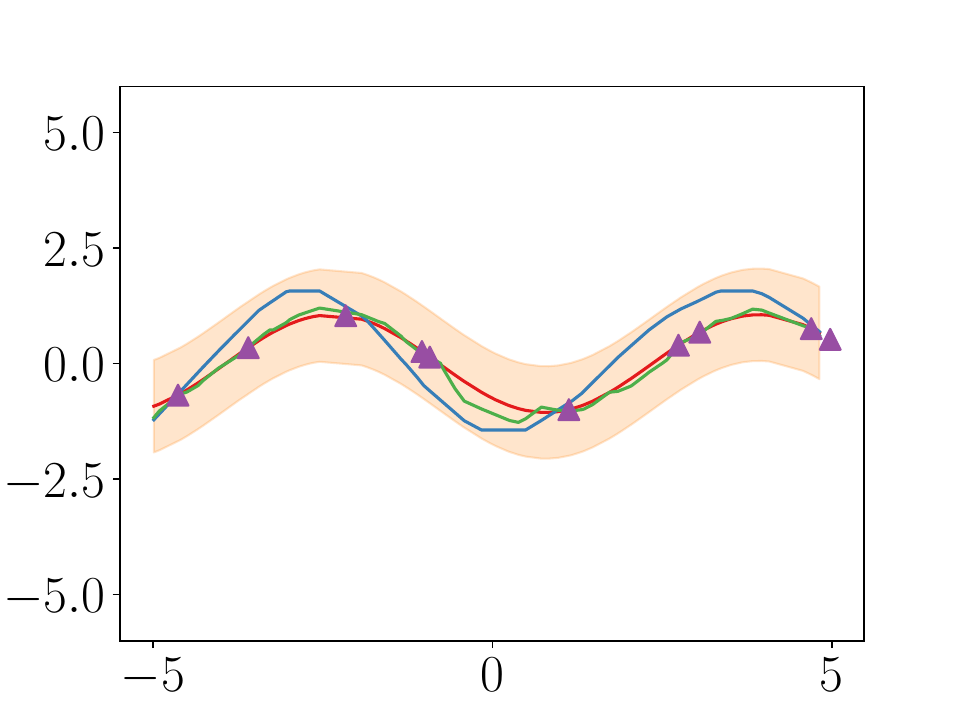}
  \end{subfigure}%
  \\
  \begin{subfigure}[t]{.33\linewidth}
    \centering
    \includegraphics[width=\linewidth]{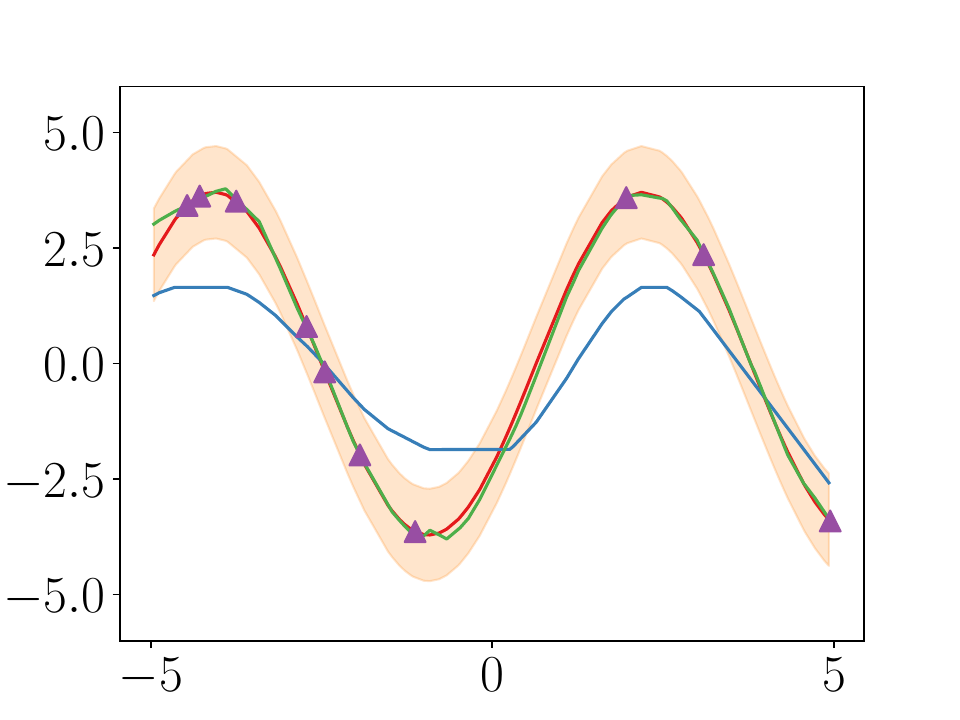}
  \end{subfigure}%
  \begin{subfigure}[t]{.33\linewidth}
    \centering
    \includegraphics[width=\linewidth]{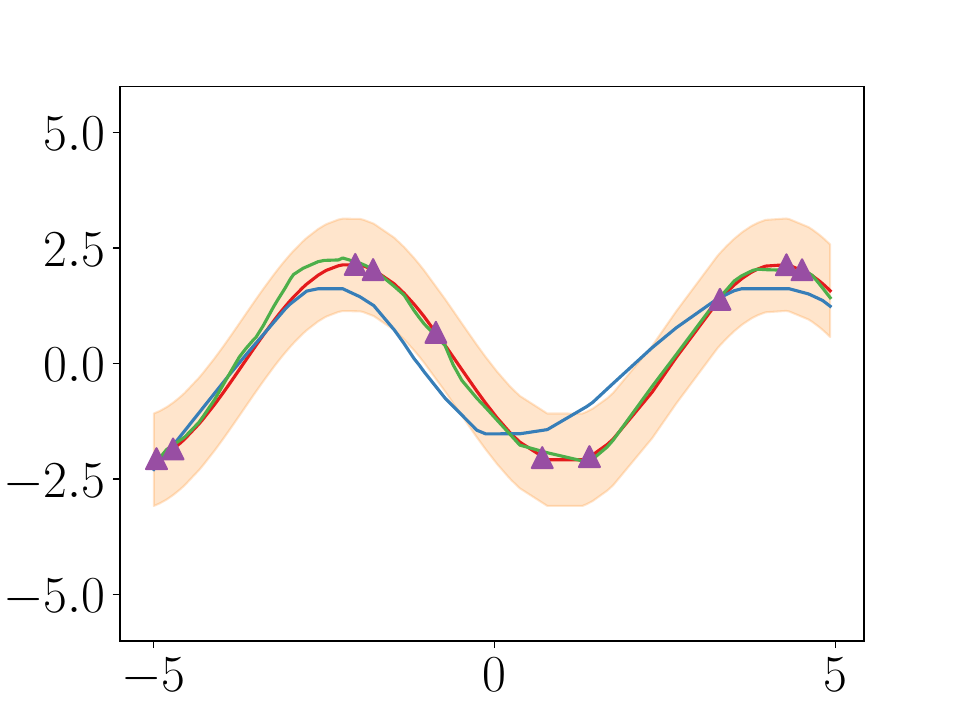}
  \end{subfigure}%
  \begin{subfigure}[t]{.33\linewidth}
    \centering
    \includegraphics[width=\linewidth]{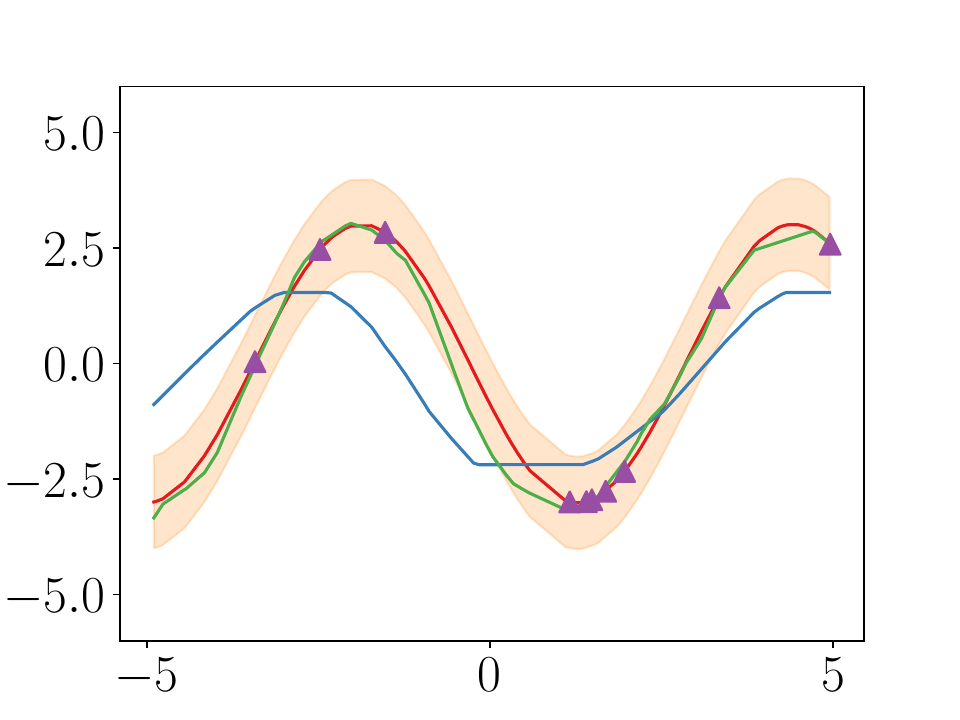}
  \end{subfigure}%
  \\
  \begin{subfigure}[t]{.33\linewidth}
    \centering
    \includegraphics[width=\linewidth]{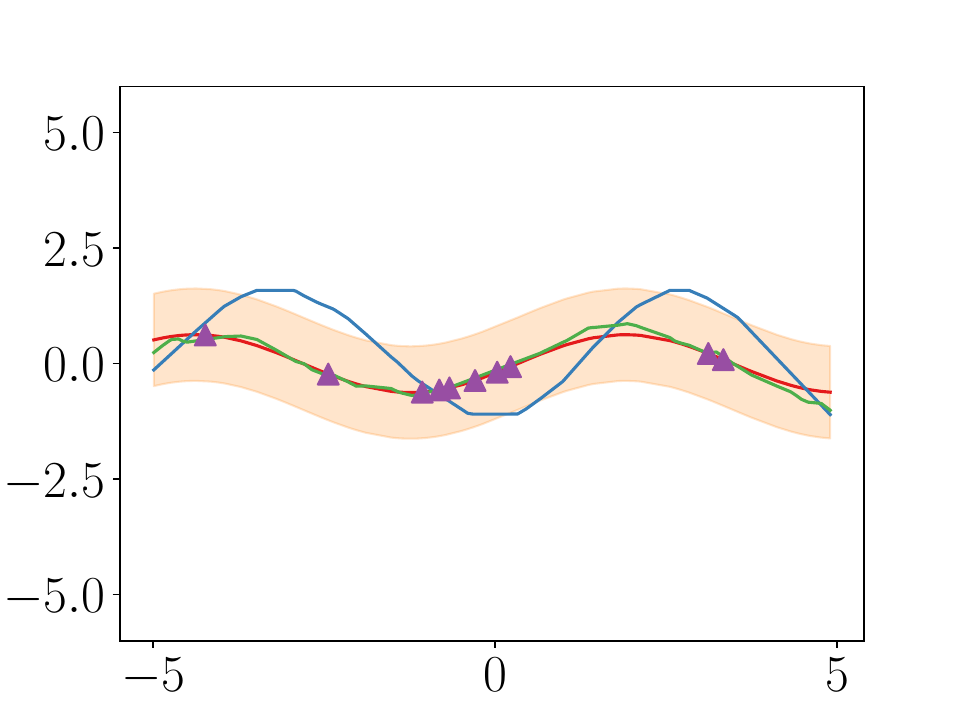}
  \end{subfigure}%
  \begin{subfigure}[t]{.33\linewidth}
    \centering
    \includegraphics[width=\linewidth]{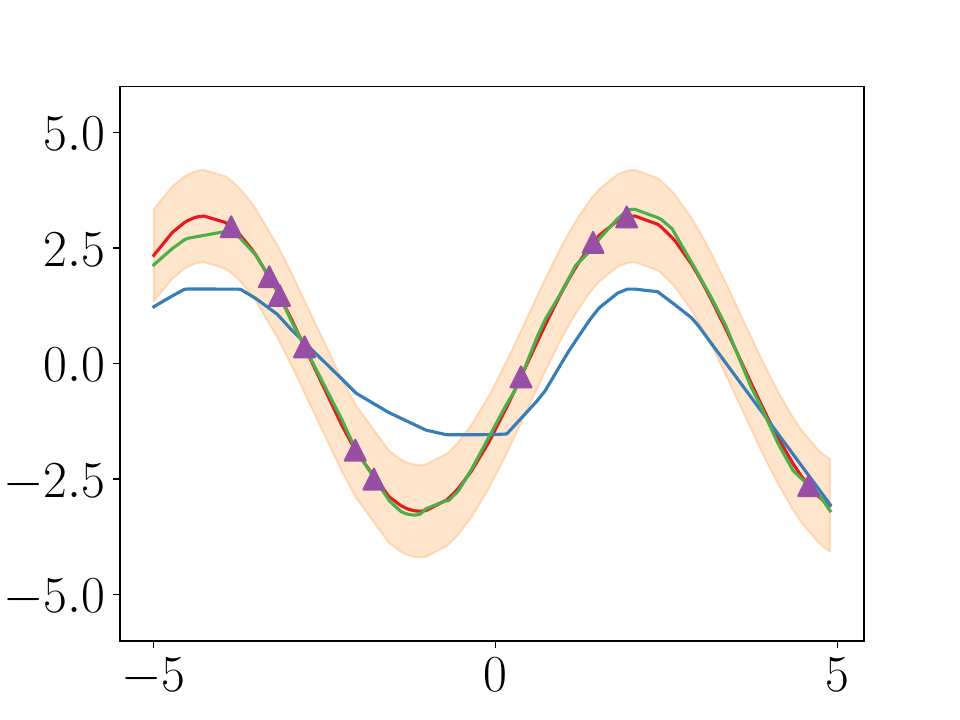}
  \end{subfigure}%
  \begin{subfigure}[t]{.33\linewidth}
    \centering
    \includegraphics[width=\linewidth]{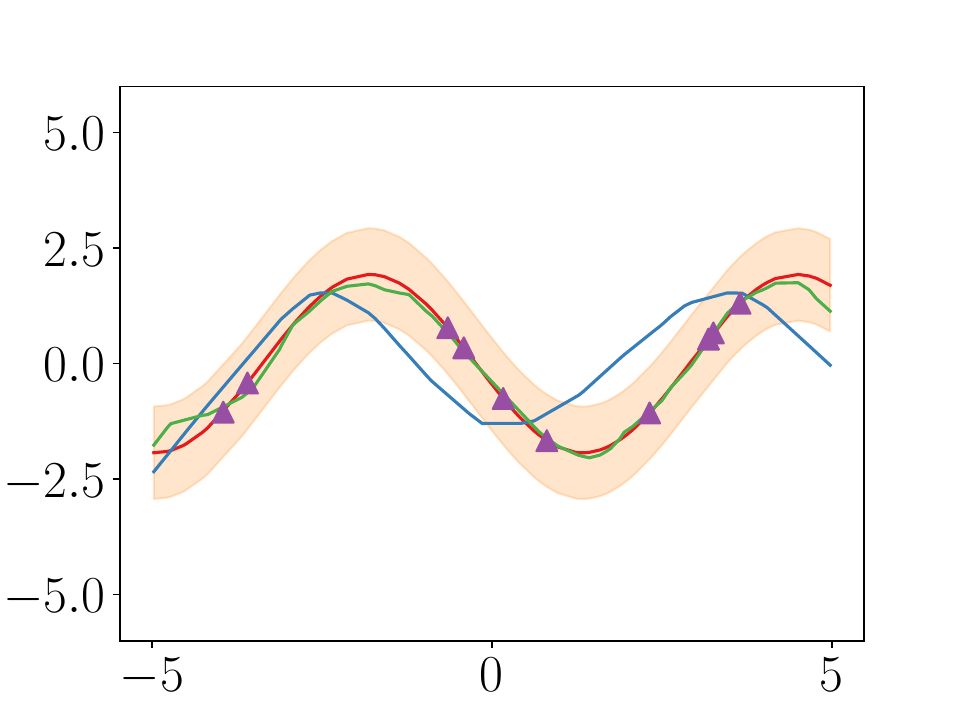}
  \end{subfigure}%
  \centering
  \mamlvisualslegend
  \caption{
    \reviewChanges{
    MAML visualizations for regressing on sine curves.
    The purple triangles are the $K^{\rm train}$ datapoints used for computing the gradients.
    With high probability, after $10$ steps, MAML is guaranteed to produce a curve that remains entirely in the banded region $81 \%$ of the time, while the pretrained model produces a curve that only entirely lies in the banded region around $33 \%$ of the time.}
    }
  \label{fig:maml_visuals}
\end{figure}

\section{Conclusion}\label{sec:conclusion}
We present a data-driven framework to provide guarantees for the performance of classical and learned optimizers in the setting of parametric optimization.
For classical optimizers, we provide strong guarantees using a sample convergence bound.
For learned optimizers, we provide generalization guarantees using the PAC-Bayes framework and a learning algorithm designed to optimize these guarantees.
We showcase the effectiveness of our approach for both classical and learned optimizers on many examples including ones from control, signal processing, and meta-learning.

\reviewChanges{
We see a few directions for interesting future research.
The bounds in this paper all required the problem parameters to be drawn of a distribution in an i.i.d. fashion.
While in many applications this assumption is common, \eg, in sparse coding, or machine learning problems, there are other applications where this assumption is not met.
For example, in control problems, where the problems need to be solved sequentially.
Extending our work beyond the i.i.d.\ assumption is an avenue for future work.
Another avenue is to scale our approach to tackle larger-scale problems for learned optimizers.
}



\acks{
  We thank Ernest Ryu, Anirudha Majumdar, Jingyi Huang, and two anonymous reviewers for helpful and detailed comments that improved the quality of this work.
  Bartolomeo Stellato and Rajiv Sambharya are supported by the NSF CAREER Award ECCS-223977. 
  Bartolomeo Stellato is also supported by the ONR YIP Award N000142512147.
  We are pleased to acknowledge that the work reported on in this paper was substantially performed using the Princeton Research Computing resources at Princeton University which is consortium of groups led by the Princeton Institute for Computational Science and Engineering (PICSciE) and Office of Information Technology's Research Computing.
}

\newpage
\bibliography{bibliographynourl}

\newpage
\appendix
\onecolumn

\reviewChanges{
\section{PAC-Bayes background}\label{sec:prob_background}}
\reviewChanges{
In this section, we introduce the PAC-Bayes background needed to construct generalization guarantees given a set of \reviewChanges{$N$ i.i.d. samples $S$}.
We first introduce the Kullback-Leibler (KL) divergence, an important component in our bounds, and show how to compute its inverse in \Sec~\ref{subsec:kl}.
In \Sec~\ref{subsec:background_bounds}, we present two PAC-Bayes bounds: a sample convergence bound and Maurer's bound.
Specifically, for classical optimizers, we will use the sample convergence bound to bound the \emph{risk}
\begin{equation*}
  \risk = \mathbf{E}_{x \sim \mathcal{X}} e(x),
\end{equation*}
in terms of the \emph{empirical risk}
\begin{equation*}
  \emprisk = \frac{1}{N} \sum_{i=1}^N e(x_i).
\end{equation*}
Recall from \Eqn~\eqref{eq:zero_one_classical} that the error term $e(x)$ for a given parameter $x$ is always equal to $0$ or $1$.
For learned optimizers, we consider weights $\theta$ drawn from a distribution $P$ and use Maurer's bound to bound the \emph{expected risk}
\begin{equation}\label{eq:exp_risk}
  \exprisk(P) = \mathbf{E}_{\theta \sim P} \mathbf{E}_{x \sim \mathcal{X}}  e_\theta(x),
\end{equation}
in terms \reviewChanges{of} its \emph{expected empirical risk}
\begin{equation*}
  \expemprisk(P) =  \mathbf{E}_{\theta \sim P} \frac{1}{N} \sum_{i=1}^N e_\theta(x_i).
\end{equation*}
We use randomized weights to represent a distribution of learned optimizers, which is a key component of the  PAC-Bayes methods.}
\reviewChanges{Using randomized weights does not limit which types of learned optimizers we can apply our method to.}

\reviewChanges{
\subsection{KL divergence}\label{subsec:kl}
The KL divergence, a measure of distance between two probability distributions, features prominently in the PAC-Bayes guarantees that we use.
To derive our generalization bounds, it is sufficient to examine the KL divergence in two scenarios: between Normal distributions and between Bernoulli distributions.
\paragraph{Normal distributions.}
The KL-divergence between continuous distributions with density functions $q$ and $p$ over the Euclidean space $\reals^m$ is defined as
\begin{equation*}
  {\rm KL}(q \parallel p) = \int_{-\infty}^{\infty}q(y)\log \left(\frac{q(y)}{p(y)}\right) dy.
\end{equation*}
We are particularly interested in the case where both $p$ and $q$ are densities of multivariate normal distributions: $\mathcal{N}_p = \mathcal{N}(\mu_p, \Sigma_p)$ and $\mathcal{N}_q = \mathcal{N}(\mu_q, \Sigma_q)$ over $\reals^m$.
In this case, the KL divergence can be obtained in closed-form~\citep{Duchi2016DerivationsFL}:
\begin{equation}\label{eq:kl_normal}
   {\rm KL}(\mathcal{N}_q\parallel\mathcal{N}_p) = \frac{1}{2} \left(\Tr(\Sigma_p^{-1} \Sigma_q) + (\mu_q - \mu_p)^T \Sigma_p^{-1} (\mu_q - \mu_p) + \log \frac{{\det} \Sigma_p}{{\det} \Sigma_q} - m \right).
\end{equation}
}
\reviewChanges{
\paragraph{Bernoulli distributions.}
Our goal is to bound the risk $\risk$ for classical optimizers and the expected risk $\exprisk(P)$ for learned optimizers with posterior distribution $P$ in terms of their empirical counterparts.
Importantly, we remark that these quantities are the expected values of 0--1 error functions in equations~\eqref{eq:zero_one_classical} and~\eqref{eq:zero_one_loss_l2o}, which correspond to the key parameters of Bernoulli distributions.
We denote the KL divergence between two Bernoulli distributions, $\mathcal{B}(q)$ with mean $q$ and $\mathcal{B}(p)$ with mean $p$, as~\citep{kl_div}
\begin{equation*}
  {\rm kl}(q \parallel p) = {\rm KL}(\mathcal{B}(q) \parallel \mathcal{B}(p)) = q \log \frac{q}{p} + (1 - q) \log \frac{1 - q}{1 - p}.
\end{equation*}
In the next subsection, we will bound the gap between the key parameter of a Bernoulli distribution denoted as $p$ and its estimated value $q \in [0,1]$ as
\begin{equation*}
  {\rm kl}(q \parallel p) \leq c,
\end{equation*}
where $c > 0$.
This implies the inequality
\begin{equation*}
  p \leq {\rm kl}^{-1}(q ~|~ c) = \sup \{p \in [0,1] \mid {\rm kl}(q \parallel p) \leq c \},
\end{equation*}
where ${\rm kl}^{-1}(q ~|~ c)$  can be computed by solving the following one-dimensional convex geometric program~\citep{geometric_program},
\begin{equation}\label{prob:kl_inv}
  \begin{array}{ll}
  \mbox{maximize} & p \\
  \mbox{subject to} &\displaystyle q \log\left(\frac{q}{p}\right) + (1 - q) \log\left(\frac{1 - q}{1 - p}\right) \leq c \\
  &0 \leq p \leq 1.\\
  \end{array}
\end{equation}
Precise solutions to this problem can be obtained through convex optimization algorithms (\eg, through interior point methods~\citep{PrimalDualIntWright1997}).
\reviewChanges{Problem~\eqref{prob:kl_inv} is used to compute our performance guarantees for both classical and learned optimizers.}
We note that an upper bound to the KL inverse can be explicitly computed using Pinsker's inequality
\begin{equation}\label{eq:pinsker}
  {\rm kl}^{-1}(q ~|~ c) \leq q + \sqrt{c / 2}.
\end{equation}
\reviewChanges{However, the gap between Pinsker's bound and the KL inverse can be large; the KL inverse is always upper bounded by one, but Pinsker's bound can be infinitely large.}
}

\reviewChanges{
\subsection{Probabilistic Bounds}\label{subsec:background_bounds}
In this subsection, we present the probabilistic bounds that we use to obtain our generalization guarantees: a sample convergence bound and \reviewChanges{Maurer's} bound.}
\reviewChanges{
\paragraph{Sample convergence bound.}
The sample convergence bound below will be used to bound the risk $\risk$ in terms of the empirical risk $\emprisk$ for classical optimizers.}
\reviewChanges{
\begin{theorem}\label{thm:sample_conv_bound}
  \citep{langford_union_prior}.
  Given $\delta \in (0,1)$ and $N$ samples \reviewChanges{$S$}, with probability at least $1 - \delta$ the following bound holds:
\begin{equation}\label{eq:langford_bound}
  {\rm kl} (\emprisk \parallel  \risk ) \leq \frac{\log (2 / \delta)}{N}.
\end{equation}
\end{theorem}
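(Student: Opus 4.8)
The plan is to recognize this as a two-sided relative-entropy concentration inequality for a binomial proportion and to prove it by the Chernoff (exponential moment) method applied separately to each tail. Since the parameters $x_1, \dots, x_N$ are drawn i.i.d.\ from $\mathcal{X}$ and each error $e(x_i) \in \{0,1\}$ is a Bernoulli random variable with mean $\risk$, the empirical count $N\emprisk = \sum_{i=1}^N e(x_i)$ follows a Binomial$(N, \risk)$ distribution. The quantity $\kl(\emprisk \parallel \risk)$ is therefore a deterministic function of this sum, and the goal reduces to showing that it exceeds $c = \log(2/\delta)/N$ with probability at most $\delta$.

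First I would establish the one-sided relative-entropy Chernoff bounds. For any level $q \ge \risk$, writing $\mathbf{P}(\emprisk \ge q) = \mathbf{P}(e^{\lambda N \emprisk} \ge e^{\lambda N q})$ for $\lambda > 0$, Markov's inequality together with independence gives $\mathbf{P}(\emprisk \ge q) \le e^{-\lambda N q}\big((1-\risk) + \risk e^{\lambda}\big)^N$; optimizing over $\lambda$ collapses the exponent to $-N\kl(q \parallel \risk)$, yielding $\mathbf{P}(\emprisk \ge q) \le e^{-N\kl(q \parallel \risk)}$. A symmetric argument with $\lambda < 0$ gives $\mathbf{P}(\emprisk \le q) \le e^{-N\kl(q \parallel \risk)}$ for any $q \le \risk$.

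Next I would combine the two tails. The map $q \mapsto \kl(q \parallel \risk)$ is strictly decreasing on $[0, \risk]$ and strictly increasing on $[\risk, 1]$, so the bad event $\{\kl(\emprisk \parallel \risk) > c\}$ decomposes into $\{\emprisk \ge q^+\} \cup \{\emprisk \le q^-\}$, where $q^- \le \risk \le q^+$ are the two roots of $\kl(q \parallel \risk) = c$. Applying the upper-tail bound at $q^+$ and the lower-tail bound at $q^-$, each event has probability at most $e^{-Nc} = \delta/2$. A union bound then gives $\mathbf{P}(\kl(\emprisk \parallel \risk) > c) \le \delta$, and taking complements yields the claimed inequality with probability at least $1 - \delta$.

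The main obstacle is the two-sided bookkeeping rather than any single estimate: because $\kl(\cdot \parallel \risk)$ is not monotone across $q = \risk$, one must split the excess-divergence event correctly into its upper and lower tails and verify that the Chernoff bound is invoked at precisely the level where $\kl$ equals $c$, which is exactly what produces the factor of two (hence $\log(2/\delta)$ rather than $\log(1/\delta)$). Some care is also needed at the boundary cases $\risk \in \{0,1\}$ and when one of the roots $q^\pm$ falls outside $[0,1]$, where the corresponding tail is vacuous and only a single Chernoff bound contributes.
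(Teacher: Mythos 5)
Your proposal is correct and follows essentially the same route as the paper's proof: both split the event $\{\kl(\emprisk \parallel \risk) \geq c\}$ into upper and lower tails at the two roots of $\kl(q \parallel \risk) = c$ (the paper's $r_\epsilon^1, r_\epsilon^2$, your $q^+, q^-$), apply the relative-entropy Chernoff bound to each tail to get $e^{-Nc} = \delta/2$, and sum the two cases. The only difference is that you derive the Chernoff tail bounds from the exponential-moment/Markov argument, whereas the paper quotes them directly; your extra attention to the boundary cases $\risk \in \{0,1\}$ is a welcome refinement the paper glosses over.
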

}

\reviewChanges{
\begin{proof}
  We seek to prove the following inequality for $\epsilon > 0$
  \begin{equation*}
    \mathbf{P}(\kl(\emprisk~||~\risk) \geq \epsilon) \leq 2 e^{-N \epsilon}.
  \end{equation*}
  To get the final bound, we set $\delta = 2 \exp(-N \epsilon)$.
  The proof proceeds by breaking the case that the KL divergence exceeds $\epsilon$ into the case where $\emprisk > \risk$ and the other case where $\emprisk < \risk$.
  For a given value of the risk $\risk$, we define the quantity $r_\epsilon^1 > \risk$ implicitly so that $\kl(r_\epsilon^1~||~\risk) = \epsilon$.
  Similarly, we define $r_\epsilon^2 < \risk$ implicitly so that $\kl(r_\epsilon^1~||~\risk) = \epsilon$.
  We continue as follows for the case where $\emprisk > \risk$:
  \begin{align*}
    \mathbf{P}(\kl(\emprisk~||~\risk) \geq \epsilon, \emprisk > \risk) &= \mathbf{P}(\emprisk \geq r^1_\epsilon) \leq e^{-N \epsilon}.
  \end{align*}
  The equality comes from the definition of $r_\epsilon^1$.
  The inequality follows from the upper tail Chernoff bound for $\Delta > 0$:
  \begin{equation*}
    \mathbf{P}(\emprisk \geq \risk + \Delta) \leq \exp(-N~\kl(\risk+\Delta~||~\risk)).
  \end{equation*}
  For the case where $\emprisk < \risk$ we have
  \begin{align*}
    \mathbf{P}(\kl(\emprisk~||~\risk) \geq \epsilon, \emprisk < \risk) &= \mathbf{P}(\emprisk \geq r^2_\epsilon) \leq e^{-N \epsilon}.
  \end{align*}
  The equality comes from the definition of $r_\epsilon^2$.
  The inequality follows from the lower tail Chernoff bound for $\Delta > 0$:
  \begin{equation*}
    \mathbf{P}(\emprisk \geq \risk - \Delta) \leq \exp(-N~\kl(\risk-\Delta~||~\risk)).
  \end{equation*}
  The proof concludes by summing the probabilities over both cases
  \begin{equation*}
    \mathbf{P}(\kl(\emprisk~||~\risk) \geq \epsilon) = \mathbf{P}(\kl(\emprisk~||~\risk) \geq \epsilon, \emprisk > \risk) + \mathbf{P}(\kl(\emprisk~||~\risk) \geq \epsilon, \emprisk < \risk) \leq 2 e^{-N \epsilon}.
  \end{equation*}
\end{proof}
}

\reviewChanges{
\paragraph{Maurer's bound.}
The derivation of our generalization bounds for learned optimizers is based on Maurer's bound (itself an adaptation of Seeger's bound~\citep{langford_seeger}), which allows us to provide bounds when the weights $\theta$ are drawn from a distribution $P \in \mathcal{P}$.
Here, $\mathcal{P}$ is the space of all probability distributions in $\reals^p$.
Specifically, Maurer's bound provides a bound on the expected risk $\exprisk(P)$ in terms of its expected empirical risk $\expemprisk(P)$.
\begin{theorem}\label{thm:maurer}
  \citep{maurer2004note}. 
  Given a set of \reviewChanges{$N$ samples $S$} where $N \geq 8$, a prior distribution independent of the training data $P_0 \in \mathcal{P}$, and $\delta \in (0, 1)$, with probability at least $1 - \delta$ the following bound holds for all distributions $P \in \mathcal{P}$:
\reviewChanges{
\begin{equation}\label{eq:pac_bayes_kl_inverse}
  \exprisk(P) \leq {\rm kl}^{-1}\left(\expemprisk(P) ~\bigg|~ \frac{1}{N}\left({\rm KL}(P \parallel P_0) + \log \frac{2\sqrt{N}}{\delta}\right)\right).
\end{equation}}
\end{theorem}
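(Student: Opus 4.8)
The plan is to follow the classical two-ingredient route to a PAC-Bayes kl-bound: a change-of-measure step that converts the posterior expectation into a prior exponential moment, and a moment bound on the per-hypothesis empirical kl-divergence. I would begin by introducing, for each fixed weight vector $\theta$, the per-hypothesis true and empirical risks $r(\theta) = \mathbf{E}_{x\sim\mathcal{X}} e_\theta(x)$ and $\hat{r}(\theta) = (1/N)\sum_{i=1}^N e_\theta(x_i)$, so that $\exprisk(P) = \mathbf{E}_{\theta\sim P} r(\theta)$ and $\expemprisk(P) = \mathbf{E}_{\theta\sim P}\hat{r}(\theta)$. Since each $e_\theta(x_i) \in \{0,1\}$, the quantity $N\hat{r}(\theta)$ is $\mathrm{Binomial}(N, r(\theta))$, which is what makes the Bernoulli $\kl$ the natural object here.

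First I would invoke the Donsker--Varadhan change-of-measure inequality (the duality formula for relative entropy): for any measurable $f$ and any $P, P_0 \in \mathcal{P}$,
\[
\mathbf{E}_{\theta\sim P} f(\theta) \leq \KL(P \parallel P_0) + \log \mathbf{E}_{\theta\sim P_0} e^{f(\theta)}.
\]
Applying this with the data-dependent but $P$-independent choice $f(\theta) = N\,\kl(\hat{r}(\theta)\parallel r(\theta))$ gives, for a fixed realization of $S$ and \emph{every} $P$ simultaneously,
\[
N\,\mathbf{E}_{\theta\sim P}\kl(\hat{r}(\theta)\parallel r(\theta)) \leq \KL(P \parallel P_0) + \log \mathbf{E}_{\theta\sim P_0} e^{N\kl(\hat{r}(\theta)\parallel r(\theta))}.
\]
Because $\kl$ is jointly convex, Jensen's inequality applied to the pair $(\hat r(\theta), r(\theta))$ under $P$ yields $\kl(\expemprisk(P)\parallel\exprisk(P)) \leq \mathbf{E}_{\theta\sim P}\kl(\hat{r}(\theta)\parallel r(\theta))$, which lets me replace the left-hand side by the aggregated divergence we actually want to control.

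The crux is the exponential-moment bound on the right-hand side. Since $P_0$ is independent of the data, Fubini swaps the prior expectation with the data expectation, reducing the task to the single-hypothesis estimate $\mathbf{E}_S\, e^{N\kl(\hat{r}(\theta)\parallel r(\theta))} \leq 2\sqrt{N}$ for $N\geq 8$. The key computation is that, writing $e^{N\kl(k/N \parallel r)}$ out explicitly against the binomial weights $\binom{N}{k} r^k (1-r)^{N-k}$, all dependence on $r(\theta)$ cancels, leaving the $r$-independent sum $\sum_{k=0}^N \binom{N}{k}(k/N)^k(1-k/N)^{N-k}$; a Stirling estimate of each term then bounds this sum by $2\sqrt{N}$. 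I expect this moment bound---securing the sharp constant $2\sqrt{N}$ rather than a cruder $O(N)$ factor---to be the main obstacle, since it is exactly the refinement that distinguishes Maurer's bound from Seeger's.

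Finally I would assemble the pieces. Taking the data expectation of the prior moment gives $\mathbf{E}_S \mathbf{E}_{\theta\sim P_0} e^{N\kl(\hat{r}(\theta)\parallel r(\theta))} \leq 2\sqrt{N}$, so Markov's inequality shows that with probability at least $1-\delta$ the random quantity $\mathbf{E}_{\theta\sim P_0} e^{N\kl(\hat{r}(\theta)\parallel r(\theta))}$ is at most $2\sqrt{N}/\delta$. Crucially this high-probability event does not involve $P$, so on it the change-of-measure bound holds uniformly over all $P \in \mathcal{P}$, giving $N\,\kl(\expemprisk(P)\parallel\exprisk(P)) \leq \KL(P \parallel P_0) + \log(2\sqrt{N}/\delta)$. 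Dividing by $N$ and inverting through the monotonicity of $p \mapsto \kl(q \parallel p)$ and the definition of $\KLinv$ from problem~\eqref{prob:kl_inv} then produces exactly~\eqref{eq:pac_bayes_kl_inverse}.
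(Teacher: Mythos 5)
Your proposal is correct and follows essentially the same route as the paper's proof: a change of measure from posterior to prior (your Donsker--Varadhan step is exactly the paper's Jensen-plus-Radon--Nikodym argument), joint convexity of $\kl$ to aggregate the per-hypothesis divergences, Tonelli/Fubini to swap expectations, Maurer's binomial moment bound $\mathbf{E}_S\, e^{N\,\kl(\hat r(\theta)\,\parallel\, r(\theta))}\le 2\sqrt{N}$ for $N\ge 8$ (which you correctly reduce to the $r$-independent sum $\sum_{k=0}^N \binom{N}{k}(k/N)^k(1-k/N)^{N-k}$), and Markov's inequality. If anything, your ordering---applying Markov to the $P$-independent prior moment $\mathbf{E}_{\theta\sim P_0}\, e^{N\,\kl(\hat r(\theta)\,\parallel\, r(\theta))}$ and only then invoking the change of measure on the resulting good event---makes the uniformity over all posteriors $P\in\mathcal{P}$ more transparent than the paper's write-up, which applies Markov to a $P$-dependent quantity at the end.
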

The PAC-Bayes framework typically adopts the following steps.
First, we select the prior $P_0 \in \mathcal{P}$ before observing any training data.
Then, we observe the training data \reviewChanges{$S$} and we choose the posterior distribution $P$ (\eg, through a learning algorithm~\citep{nonvacuous_pac_bayes}).
Lastly, we use the inequality~\eqref{eq:pac_bayes_kl_inverse} to bound the expected risk of the posterior distribution $\exprisk(P)$.
This posterior is allowed to depend on the prior and the samples.
}

\reviewChanges{
\begin{proof}
  The relative entropy $\KL(P~||~P_0)$ of two probability measures $P$ and $P_0$ on a set $\mathcal{H}$ is defined to be infinite if $P$ is not absolutely continuous with respect to $P$.
  Otherwise, $\KL(P~||~P_0) = \mathbf{E}_P [\log \frac{dP}{dP_0}]$ where $dP / dP_0$ is the density of $P$ with respect to $P_0$.
  In the proof, we let $S$ be a set of $N$ training samples of the parameters drawn i.i.d. from the distribution $\mathcal{X}$.
  The proof continues as
  \begin{align*}
    &\mathbf{E}_S [\exp(N~ \kl(\exprisk(P)~||~ \expemprisk(P)) - \KL(P~||~ P_0))]\\
    & \leq \mathbf{E}_S \left[\exp \left(\mathbf{E}_{\theta \sim P} \left[N~ \kl \left(\frac{1}{N}\sum_{i=1}^N e_\theta(x_i)~\bigg|\bigg|~ \mathbf{E}_{x \sim \mathcal{X}} e_\theta(x)\right) - \log \frac{dP}{dP_0} (\theta) \right]\right)\right] \\
    & \leq \mathbf{E}_S \left[ \mathbf{E}_{\theta \sim P} \left[ \exp \left(N~ \kl \left(\frac{1}{N}\sum_{i=1}^N e_\theta(x_i)~\bigg|\bigg|~ \mathbf{E}_{x \sim \mathcal{X}} e_\theta(x)\right) - \log \frac{dP}{dP_0} (\theta) \right)\right]\right] \\
    & = \mathbf{E}_{S} \mathbf{E}_{\theta \sim P_0} \exp \left(N~ \kl \left(\frac{1}{N}\sum_{i=1}^N e_\theta(x_i)~\bigg|\bigg|~ \mathbf{E}_{x \sim \mathcal{X}} e_\theta(x)\right)  \right) \left( \frac{dP}{dP_0}\right)^{-1} \left( \frac{dP}{dP_0}\right)\\
    & = \mathbf{E}_{\theta \sim P_0} \mathbf{E}_S \exp \left(N~ \kl\left(\frac{1}{N}\sum_{i=1}^N e_\theta(x_i)~\bigg|\bigg|~ \mathbf{E}_{x \sim \mathcal{X}} e_\theta(x)\right)\right) \\
    & \leq 2 \sqrt{N}.
  \end{align*}
  The first inequality follow's from Jensen's inequality and the convexity of the KL divergence.
  The second inequality follow's from Jensen's inequality and the convexity of the exponential function.
  The third line applies the Radon-Nikodyn derivative to change the expectation of $\theta$ over the posterior $P$ to be the expectation of $\theta$ over the prior $P_0$.
  The second to last line uses Tonelli's theorem to switch the order of the expectations of a non-negative random variable.
  The last inequality uses inequality $1$ in~\citet{maurer2004note}.
  Then, by Markov's inequality the proof finishes with
  \begin{align*}
    \delta & \geq \mathbf{P}_S \left(\exp(N~ \kl(\exprisk(P)~||~  \expemprisk(P)) - \KL(P~||~ P_0)) > \frac{2 \sqrt{N}}{\delta} \right) \\
    &= \mathbf{P}_S  \left(\kl(\exprisk(P)~||~  \expemprisk(P)) > \frac{KL(P~||~  P_0) + \log \left(\frac{2 \sqrt{N}}{\delta}\right)}{N} \right).
  \end{align*}
\end{proof}
}

\section{Experimental details}
\subsection{Cross-validating $B^{\rm target}$}\label{subsec:crossval}
In our experiments, we cross-validate over six $B^{\rm target}$ hyperparameter values.
If a particular bound on the expected risk holds with probability $1 - \delta$ for a given $B^{\rm target}$, then all six bounds hold with probability $1 - 6 \delta$ by a union bound.
Table~\ref{tab:cross_val} enumerates the $B^{\rm target}$ values chosen for cross-validation, alongside the corresponding upper bound on the generalization gap as determined by Pinsker's inequality from \Eqn~\eqref{eq:pinsker}.
After training, if $B(w^\star,s^\star,\lambda^\star)=B^{\rm target}$ (which we observe, approximately holds true due to the penalty form from problem~\eqref{prob:training_prob}), then we can bound the generalization gap: $\exprisk(P) - \expemprisk(P) \leq \sqrt{B^{\rm target} / 2}$ where the posterior is $P = \mathcal{N}_{w^\star,s^\star}$.

\begin{table}[!h]
  \centering
  \footnotesize
    \renewcommand*{\arraystretch}{1.0}
  \caption{
    The different $B^{\rm target}$ values used during cross-validation and their associated upper bounds on the generalization gap.
  }
  \label{tab:cross_val}
  \vspace*{-3mm}
  \begin{tabular}{l}
  \end{tabular}
  \adjustbox{max width=\textwidth}{
    \begin{tabular}{ll}
    $B^{\rm target}$&
    $\sqrt{B^{\rm target}/2}$
    \\
    \midrule
    \csvreader[head to column names, late after line=\\]{./data/cross_validation.csv}{
    Btarget=\colA,
    Pinsker=\colB,
    }{\colA & \colB}
    \bottomrule
  \end{tabular}}
\end{table}

\subsection{Quantile bounds}\label{sec:quantiles}
The results from Sections~\ref{sec:classical} and \ref{sec:gen_l2o} provide probabilistic bounds on the risk and the expected risk respectively for a number of algorithm steps $k$ and tolerance $\epsilon$.
Recall that the (expected) risk is equivalent to the probability of failing to reach a given tolerance (due to the use of the error function).
Therefore an upper bound on the (expected) risk corresponds to an upper bound on the quantile.
For instance, if after $k$ steps, the risk is bounded with probability (w.p.) $1 - \delta$ by $0.1$ with some underlying metric $\phi$ and tolerance $\epsilon$, then, w.p. $1 - \delta$, the tolerance $\epsilon$ upper bounds the metric $\phi$ after $k$ steps at least $90 \%$ of the time.
Using our notation, this is equivalent to the following statement; if $r_{\mathcal{X}} = \mathbf{E}_{x \sim \mathcal{X}} [\mathbf{1}(\phi(z^k(x), x) \geq \epsilon)] \leq 0.1$ w.p. $1-\delta$, then $\mathbf{P}_{x \sim \mathcal{X}}(\phi(z^k(x), x) \geq \epsilon) \leq 0.1$ w.p. $1-\delta$.
Therefore the tolerance $\epsilon$ is a valid, probabilistic $90$-th quantile bound.

To obtain the \emph{tightest} quantile bounds in \Sec~\ref{sec:numerical_experiments} for a given number of steps $k$, we proceed as follows.
We first obtain bounds on the risk for $N^{\rm tol}$ pre-determined tolerances.
If each bound on the risk with a specific tolerance holds with probability $1 - \delta$, then all of the bounds across all of the tolerances hold simultaneously with probability $1 - \delta N^{\rm tol}$ by virtue of a union bound.
Then for a given $k$ and quantile $Q$, we find the lowest tolerance such that the bound on the (expected) risk is at most $1 - Q$.
For example, say we want to bound the  $90$th quantile bound of the fixed-point residual at $k$ steps.
We first take all of the bounds on the risk for $\epsilon_1, \dots, \epsilon_{N^{\rm tol}}$.
Then we find the lowest value $\epsilon_i$ such that the (expected) risk with tolerance $\epsilon_i$ is at most $0.1$; this value of $\epsilon_i$ bounds the $90$th quantile with probability at least $1 - \delta N^{\rm tol}$.
Note that the bounds do not hold simultaneously across different values of $k$.
However, if desired, they can be obtained by applying another union bound over the algorithm steps.

\subsection{Other numerical details}\label{sec:prob_details}
To obtain the quantile bounds, we discretize the metric into $81$ pre-determined tolerances.
For the metrics in the MAML problem, the discretization is $81$ points evenly spaced out on a log scale between $10^{-3}$ and $10^1$.
For the NMSE metric, we discretize between $-80$ and $0$ evenly on a linear scale. 
For all other metrics, the discretization is $81$ points evenly spaced out on a log scale between $10^{-6}$ and $10^2$.
For classical optimizers, we set the desired probability value to be $\delta = 10^{-4}$.
The bounds on the risk for the classical optimizers holds with probability $1 - \delta = 0.9999$ and each of the quantile bounds holds with probability $0.9919$ due to the union bound over the $81$ tolerances.
For learned optimizers, the desired probability values are $\delta = 10^{-5}$ and $\omega = 10^{-5}$.
For the learned optimizers, there are two additional considerations: the additional sample convergence bounds which holds with probability $1 - \omega$ and the cross-validation over the set of $B^{\rm target}$ values which requires a union bound.
After taking a union bound over the cross-validated $B^{\rm target}$ values, the bound on the expected risk holds with probability at least $1 - 6 (\delta + \omega) = 0.99988$.
The bounds on the quantiles each hold with probability at least $0.99028$.
For all learned optimizers, we set the prior hyperparameters to be $\lambda^{\rm max} = 100$ and $b=100$.



\section{Proofs}
\subsection{Proof of \Thm~\texorpdfstring{\ref{thm:gen_thm}}{}}\label{proof:gen_thmproof}
The vector $a \in \mathbf{N}^J_+$ and constant $\delta \in (0,1)$ defines the quantity $\delta_a$ from \Eqn~\eqref{eq:delta_a}, $\delta_a = \delta \left(6 / (\pi^2)\right)^J (\prod_{j=1}^J a_j^2)^{-1}$.
Note that $\delta_a$ is in the range $(0,1)$ since all of $a_j$ terms and $J$ are at least one and $\delta \in (0,1)$.
Next, we apply \reviewChanges{Maurer's} bound from \Thm~\ref{thm:maurer} which states that with probability at least $1 - \delta_a$, the following inequalities hold:

\noindent
\resizebox{1.00\linewidth}{!}{
\begin{minipage}{\linewidth}
\begin{align}
  \reviewChanges{{\rm kl}}(\hat{R}_S(\mathcal{N}_{w,s})~|\reviewChanges{|}~ R_\mathcal{X}(\mathcal{N}_{w,s})) &\leq \frac{1}{N} \left(\KL(\mathcal{N}_{w,s}\parallel\mathcal{N}(w_0,\Lambda)) + \log \frac{\reviewChanges{2 \sqrt{N}}}{\delta_a}\right) \notag\\
  &= \frac{1}{N} \left(\KL(\mathcal{N}_{w,s}\parallel\mathcal{N}(w_0,\Lambda)) + \log \left(\prod_{j=1}^J a_j^2 \right) +  J \log \frac{\pi^2}{6} + \log \frac{\reviewChanges{2 \sqrt{N}}}{\delta}\right) \notag\\
  &= \frac{1}{N} \left(\KL(\mathcal{N}_{w,s}\parallel\mathcal{N}(w_0,\Lambda)) + 2 \sum_{j=1}^J  \log \left(b \log \frac{\lambda^{\rm max}}{\lambda_j}\right)  +  J \log \frac{\pi^2}{6} + \log \frac{\reviewChanges{2 \sqrt{N}} }{\delta}\right).\label{eq:ineq_delta_a}
\end{align}
\end{minipage}
}
In the final line, we use the equality from the theorem $\lambda_j = \lambda^{\rm max} \exp (- a_j / b)$
where $b$ and $\lambda^{\rm max}$ are pre-defined.
Now, to get the main result, we take a union bound over all possible vectors $a \in \mathbf{N}^J_+$.
By taking a union bound with probability at least 
\begin{equation}\label{eq:probability}
  1 - \sum_{a_1=1}^\infty \dots  \sum_{a_J=1}^\infty \left(\frac{6}{\pi^2}\right)^J \frac{\delta}{ a_1^2 a_2^2 \cdots a_J^2},
\end{equation}
inequality~\eqref{eq:ineq_delta_a} holds uniformly for all $a \in \mathbf{N}^J_+$.
Since 
\begin{equation*}
  \sum_{i=1}^\infty \frac{1}{i^2} = \frac{\pi^2}{6},
\end{equation*}
this probability given by line~\eqref{eq:probability} simplifies to $1 - \delta$.

\subsection{Proof of \Thm~\texorpdfstring{\ref{thm:l2ws}}{}}\label{proof:l2wsproof}
\begin{lemma}\label{lem:spectral_norm}
  If $0 \leq A \leq C$ element-wise for $A \in \reals^{m \times n}$ and $C \in \reals^{m \times n}$, then $\|A\|_2 \leq \|C\|_2$.
\end{lemma}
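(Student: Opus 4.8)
The plan is to use the variational characterization of the spectral norm as a bilinear form, and to exploit the entrywise nonnegativity of $A$ to reduce the maximization to nonnegative test vectors, where the entrywise inequality $A \le C$ can be applied directly. Recall that $\|A\|_2 = \max_{\|u\|_2 = \|v\|_2 = 1} u^T A v$, with the maximum taken over $u \in \reals^m$ and $v \in \reals^n$.

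The first and main step is to argue that this maximum may be taken over nonnegative vectors without loss. Since $A \ge 0$ entrywise, for any unit vectors $u, v$ we have $u^T A v = \sum_{i,j} u_i A_{ij} v_j \le \sum_{i,j} |u_i| A_{ij} |v_j| = |u|^T A |v|$, where $|u|$ and $|v|$ denote the entrywise absolute values. Because $\| |u| \|_2 = \|u\|_2$ and $\| |v| \|_2 = \|v\|_2$, replacing $u,v$ by $|u|,|v|$ keeps both vectors on the unit sphere. Hence the supremum over all unit vectors equals the supremum over nonnegative unit vectors, i.e. $\|A\|_2 = \max_{\|u\|_2 = \|v\|_2 = 1,\ u,v \ge 0} u^T A v$.

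The second step is then immediate: for nonnegative unit vectors $u \ge 0$ and $v \ge 0$, the entrywise bound $0 \le A_{ij} \le C_{ij}$ gives, term by term, $u_i A_{ij} v_j \le u_i C_{ij} v_j$ (all factors are nonnegative), so that $u^T A v \le u^T C v \le \|C\|_2$. Taking the maximum over nonnegative unit $u,v$ and combining with the reduction from the first step yields $\|A\|_2 \le \|C\|_2$.

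The only delicate point is the reduction to nonnegative test vectors; everything else follows directly from nonnegativity of the entries. If one prefers to avoid the bilinear-form argument altogether, an alternative is to pass to the symmetric dilation $\left[\begin{smallmatrix} 0 & A \\ A^T & 0 \end{smallmatrix}\right]$, whose spectral radius equals $\|A\|_2$, and invoke Perron--Frobenius monotonicity of the spectral radius for entrywise-ordered nonnegative matrices; but I expect the direct argument above to be shorter and fully self-contained.
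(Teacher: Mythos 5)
Your proof is correct and takes essentially the same approach as the paper's: both restrict the variational characterization of $\|A\|_2$ to nonnegative unit vectors (justified by $A \geq 0$ entrywise, via passing to absolute values) and then apply the entrywise bound $A \leq C$. The only cosmetic difference is that you use the bilinear form $\max_{u,v} u^T A v$ while the paper uses $\max_{v}\|Av\|_2$ with a nonnegative maximizer $v^\star$; your version makes the term-by-term comparison $u^T A v \leq u^T C v$ immediate, whereas the paper's implicitly uses that $0 \leq Av^\star \leq Cv^\star$ entrywise implies $\|Av^\star\|_2 \leq \|Cv^\star\|_2$, but the underlying idea is identical.
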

\begin{proof}
  The proof of the lemma proceeds as follows:
  \begin{align*}
    \|A\|_2 &= \max_{\|v\|_2 = 1,v\geq0} \|Av\|_2 \\
    &= \|Av^\star\|_2\\
    &\leq \|Cv^\star\|_2\\
    &\leq \|C\|_2.
  \end{align*}
  The first line follows from the definition of the spectral norm, and noting that since $A \geq0$, a maximizer occurs where $v\geq0$.
  To see this, observe that if a vector $\bar{v}$ is a maximizer, then so is $|\bar{v}|$.
  In the second line, we let $v^\star$ be the maximizer.
  The third line comes from $A \leq C$.
  The last line follows from the definition of the spectral norm.
\end{proof}

\paragraph{Bounding the spectral norm of the weight matrix.}
We first let $U_i \sim \mathcal{N}(0,\Sigma_i)$ and $u_i \sim \mathcal{N}(0,\sigma_i)$ and define the following matrices:
\begin{equation*}
  \tilde{U}_i = \begin{bmatrix}
    U_i \\
    u_i^T
  \end{bmatrix}, \quad
  \tilde{\Sigma}_i = \begin{bmatrix}
    \Sigma_i \\
    \sigma_i^T
  \end{bmatrix}
  .
\end{equation*}
We now state a result from~\citet[\Sec~4.3]{Tropp_2011} that will allows us to bound $\|\tilde{U}_i\|_2$ with high probability.
\begin{theorem}\label{thm:tropp}
  Consider a fixed matrix $B \in \reals^{d_1 \times d_2}$ and a random matrix $\Gamma \in \reals^{d_1 \times d_2}$ whose entries are independent standard normal variables.
  Define the variance parameter
  \begin{equation}\label{eq:var_tropp}
      v^2 = \max \{\max_j\|B_{j:}\|^2_2, \max_k\|B_{:k}\|^2_2\},
    \end{equation}
  where $B_{j:}$ and $B_{:k}$ are the $j$-th row and $k$-th column of the matrix $B$.
  Then for all $t \geq 0$, 
  \begin{equation*}
    \mathbf{P}\left(\|\Gamma \odot B\|_2 \geq t \right) \leq (d_1 + d_2)e^{-t^2 / 2 v^2}.
  \end{equation*}
\end{theorem}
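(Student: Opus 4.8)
The plan is to recognize $\Gamma \odot B$ as a \emph{matrix Gaussian series} and control its spectral norm with the matrix Laplace transform method, exactly as in \citet[\Sec~4.3]{Tropp_2011}. Writing $E_{jk} \in \reals^{d_1 \times d_2}$ for the standard basis matrix with a single one in entry $(j,k)$, and letting $\gamma_{jk}$ denote the independent standard normal entries of $\Gamma$, the Hadamard product admits the representation
\begin{equation*}
  \Gamma \odot B = \sum_{j=1}^{d_1} \sum_{k=1}^{d_2} \gamma_{jk}\, A_{jk}, \qquad A_{jk} = B_{jk} E_{jk},
\end{equation*}
a Gaussian combination of the fixed coefficient matrices $A_{jk}$. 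First I would pass to the Hermitian dilation $\mathcal{H}(X) = \left[\begin{smallmatrix} 0 & X \\ X^T & 0 \end{smallmatrix}\right] \in \symm^{d_1+d_2}$, which satisfies $\lambdamax(\mathcal{H}(X)) = \|X\|_2$; bounding the spectral norm of the rectangular matrix then reduces to a largest-eigenvalue tail bound for the symmetric Gaussian series $Y = \sum_{j,k} \gamma_{jk}\, \mathcal{H}(A_{jk})$.

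The core estimate is the matrix Chernoff bound $\mathbf{P}(\lambdamax(Y) \geq t) \leq e^{-\theta t}\, \mathbf{E}\, \Tr \exp(\theta Y)$ for every $\theta > 0$, combined with the master bound on the matrix moment generating function of a Gaussian series, $\mathbf{E}\, \Tr \exp(\theta Y) \leq \Tr \exp\!\big( \tfrac{\theta^2}{2} \sum_{j,k} \mathcal{H}(A_{jk})^2 \big)$. The only part specific to our $B$ is the resulting \emph{matrix variance}. Since $\mathcal{H}(A_{jk})^2$ is block diagonal with blocks $A_{jk}A_{jk}^T$ and $A_{jk}^T A_{jk}$, and since $E_{jk}E_{jk}^T = e_j e_j^T$ and $E_{jk}^T E_{jk} = \tilde e_k \tilde e_k^T$ for standard basis vectors $e_j \in \reals^{d_1}$, $\tilde e_k \in \reals^{d_2}$, I would compute
\begin{equation*}
  \sum_{j,k} A_{jk} A_{jk}^T = \diag\big(\|B_{1:}\|_2^2, \dots, \|B_{d_1:}\|_2^2\big), \qquad \sum_{j,k} A_{jk}^T A_{jk} = \diag\big(\|B_{:1}\|_2^2, \dots, \|B_{:d_2}\|_2^2\big).
\end{equation*}
Hence $\big\| \sum_{j,k} \mathcal{H}(A_{jk})^2 \big\|_2 = \max\{\max_j \|B_{j:}\|_2^2, \max_k \|B_{:k}\|_2^2\} = v^2$. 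Using $\Tr \exp(\cdot) \leq (d_1 + d_2)\exp(\lambdamax(\cdot))$ to extract the ambient dimension and optimizing $e^{-\theta t + \theta^2 v^2/2}$ over $\theta$ at $\theta = t/v^2$ then yields the advertised tail $(d_1 + d_2)\,e^{-t^2/(2v^2)}$.

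The main obstacle is the matrix moment generating function bound: unlike in the scalar case the summands $\gamma_{jk}\mathcal{H}(A_{jk})$ do not commute, so controlling $\mathbf{E}\,\Tr\exp(\theta Y)$ requires Lieb's concavity theorem (concavity of $A \mapsto \Tr\exp(H + \log A)$) together with Jensen's inequality, which is the technical heart of the argument. Since the statement is quoted verbatim from \citet[\Sec~4.3]{Tropp_2011}, an equally acceptable route is to invoke the master tail bound for self-adjoint Gaussian series as a cited black box; in that case the entire proof collapses to the variance identity $\| \sum_{j,k} \mathcal{H}(A_{jk})^2 \|_2 = v^2$ displayed above, which follows elementarily from the sparsity of the basis matrices $E_{jk}$ and needs no further machinery.
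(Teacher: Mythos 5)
Your proof is correct, and it faithfully reconstructs the argument of the cited source: the paper itself states this theorem as a black-box import from \citet[\Sec~4.3]{Tropp_2011} and provides no proof of its own, so there is no internal argument to diverge from. Your decomposition $\Gamma \odot B = \sum_{j,k} \gamma_{jk} B_{jk} E_{jk}$, the passage to the Hermitian dilation, the matrix Laplace transform with Lieb's concavity theorem, and in particular the block-diagonal variance computation $\sum_{j,k} A_{jk}A_{jk}^T = \diag\bigl(\|B_{1:}\|_2^2, \dots, \|B_{d_1:}\|_2^2\bigr)$ together with its column analogue, giving $\bigl\| \sum_{j,k} \mathcal{H}(A_{jk})^2 \bigr\|_2 = v^2$, are exactly the steps of Tropp's proof, and the optimization at $\theta = t/v^2$ correctly delivers the stated tail $(d_1+d_2)e^{-t^2/2v^2}$.
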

We let $v_i^2$ be the variance parameter of the $i$-th layer from \Eqn~\eqref{eq:var_tropp}:
\begin{equation*}
  v_i^2 = \max \{\max_j\|(\tilde{\Sigma}_i)^{1/2}_{j:}\|^2_2, \max_k\|(\tilde{\Sigma}_i)^{1/2}_{:k}\|^2_2\}.
\end{equation*}
Using \Thm~\ref{thm:tropp}, we bound the spectral norm of $\tilde{U}_i$ as 
\begin{equation}\label{ineq:spectral_bnd}
  \mathbf{P}(\|\tilde{U}_i\|_2 \geq \tau_i) \leq (m_i + n_i + 1)e^{-\tau_i^2 / 2 v_i^2}.
\end{equation}
We set the right hand side to be $\delta / L$ to get
\begin{equation*}
  \tau_i = v_i \sqrt{2 \log(L(m_i + n_i + 1)/\delta)},
\end{equation*}
thereby bounding the spectral norm of $\tilde{U}_i$ by $\tau_i$ with probability at least $1 - \delta / L$.
We take a union bound across all layers so that with probability $1 - \delta$, the inequalities $\|\tilde{U}_i\|_2 \leq \tau_i$ for $i=0, \dots, L-1$ hold simultaneously.

\paragraph{Bounding the output of each layer.}
We turn our attention to bounding the output of the $i$-th layer, which we denote as $y_i(x)$ (where $y_0 = x$).
Due to the bias terms, it is helpful to include the notation $\bar{y}_i(x) = (y_i(x), 1)$.
We then have the following bound for $i =0,\dots, L-2$:
\begin{align}
  \|y_{i+1}(x)\|_2 &= \|\psi((\tilde{W}_{i} + \tilde{U}_{i}) \bar{y}_i(x))\|_2 \notag\\
  &\leq \|\tilde{W}_{i} + \tilde{U}_{i}\|_2 \|\bar{y}_i(x)\|_2 \notag\\
  &\leq (\|W_{i}\|_2 + \|b_{i}\|_2 + \|\tilde{U}_{i}\|_2) (\|y_i(x)\|_2 + 1) \label{ineq:layer_bound}.
\end{align}
The first inequality follows from the ReLU activation function and Cauchy-Schwarz inequality.
The second inequality follows from the triangle inequality.
Note that the final layer does not have a ReLU activation, but the inequality holds nonetheless to bound $\|y_L(x)\|_2$.
Now, we let $a_0^\star = \bar{x}$ and 
\begin{equation*}
  a_{i+1}^\star = (\|W_{i}\|_2 + \|b_{i}\|_2  + \tau_{i})(a_i^\star + 1).
\end{equation*}
It follows from inequalities~\eqref{ineq:spectral_bnd} and~\eqref{ineq:layer_bound} that with probability at least $1 - \delta$, the quantity $a_i^\star$ upper bounds the output of the $i$-th layer.
Since $h_\theta(x)$ is the output of the $L$-th layer, the bound $\|h_\theta(x)\|_2 \leq a^\star_L$ holds with probability $1 - \delta$.

\section{Operator theory definitions}\label{sec:op_theory}
First, recall that the set of fixed-points of operator $T$ is denoted as $\fix T$.
\begin{definition}[$\beta$-contractive operator]\label{def:contractive}
  An operator $T$ is $\beta$-contractive for $\beta \in (0, 1)$ if
  \begin{equation*}
    \|Tx - Ty\|_2 \leq \beta \|x - y\|_2 \quad \forall x, y \in \dom T.
  \end{equation*}
\end{definition}

\begin{definition}[$\beta$-linearly convergent operator]\label{def:lin_conv}
  An operator $T$ is $\beta$-linearly convergent for $\beta \in [0, 1)$ if
  \begin{equation*}
    \dist_{\fix T}(Tx) \leq \beta \dist_{\fix T}(x) \quad \forall x \in \dom T.
  \end{equation*}
\end{definition}

\begin{definition}[Non-expansive operator]
  An operator $T$ is non-expansive if
  \begin{equation*}
    \|Tx - Ty\|_2 \leq \|x - y\|_2, \quad \forall x, y \in \dom T.
  \end{equation*}
\end{definition}

\begin{definition}[$\alpha$-averaged operator]\label{def:averaged}
  An operator $T$ is $\alpha$-averaged for $\alpha \in (0, 1)$ if there exists a non-expansive operator $R$ such that $T = (1 - \alpha) I + \alpha R$.
\end{definition}

\reviewChanges{
\paragraph{Rates of convergence.}
The convergence rate of the fixed-point iterations can be summarized as follows for any $z^\star(x) \in \fix T_x$.
If operator $T$, with parameter $x$, is $\beta$-linearly convergent, with $\beta \in (0, 1)$, then~\citep{l2ws}
\begin{equation}\label{eq:lin_conv_rate}
  \|z^{k+1}(x) - z^k(x)\|_2 \leq 
    2 \beta^k \|z^\star(x) - z^0(x)\|_2.
\end{equation}
This rate also applies to $\beta$-contractive operators as they are a subset of $\beta$-linearly-convergent operators.
If operator $T$ with parameter $x$ is $\alpha$-averaged then the average\reviewChanges{d} iteration, also called \reviewChanges{the} {Krasnosel'ski\u{\i}-Mann} iteration, satisfies the following bound~\citep{Lieder2018ProjectionBM}:
\begin{equation}\label{eq:avg_rate}
  \frac{\|z^{k+1}(x) - z^k(x)\|_2}{\|z^\star(x) - z^0(x)\|_2} \leq \begin{dcases}
    \sqrt{\frac{1}{k+1}\left(\frac{k}{k+1}\right)^k \frac{1}{\alpha(1 - \alpha)}} & \text{if} \quad \frac{1}{2} \leq \alpha \leq \frac{1}{2}\left(1 + \sqrt{\frac{k}{k+1}}\right) \\
     \frac{1}{2}(2 \alpha - 1)^k & \text{if} \quad \frac{1}{2}\left(1 + \sqrt{\frac{k}{k+1}}\right) \leq \alpha \leq 1.
  \end{dcases}
\end{equation}
}

\end{document}